\documentclass[12pt]{amsart}
\usepackage{amsthm,amsfonts,amssymb,fullpage}
\usepackage{color}

\input{xy}
\xyoption{all}
\xyoption{matrix}

\theoremstyle{plain}
 \newtheorem{thm}{Theorem}
 \newtheorem{lem}[thm]{Lemma}
 \newtheorem{prop}[thm]{Proposition}
 \newtheorem{cor}[thm]{Corollary} 
 \newtheorem{question}[thm]{Question} 
 \theoremstyle{remark}
 \newtheorem{rem}[thm]{Remark}
\theoremstyle{definition}
 \newtheorem{example}[thm]{Example}
 \newtheorem{defn}[thm]{Definition}

\def\t{\mathrel{\triangleleft}}
\def\tt{\mathrel{\tilde{\triangleleft}}}
\def\bt{\mathrel{\blacktriangleleft}}

\def\sub{\underline}
\def\ot{\otimes}
\def\wh{\widehat}

\def\RR{\mathbb{R}}
\def\ZZ{\mathbb{Z}}
\def\Hom{\mathrm{Hom}}
\def\Im{\mathrm{Im}}
\def\Ker{\mathrm{Ker}}
\def\oB{\overline{B}}
\def\oH{\overline{H}}

\def\oh{\overline{h}}
\def\lDelta{\overleftarrow{\Delta}}
\def\rDelta{\overrightarrow{\Delta}}
\def\ignore#1{}

\def\Id{\mathrm{Id}}
\newcommand{\mop}[1]{\mathop{\hbox {\rm #1} }\nolimits}
\def\Rack{\mathrm{R}}
\def\Quandle{\mathrm{Q}}
\def\Norm{\mathrm{N}}
\def\Deg{\mathrm{D}}
\def\Late{\mathrm{L}}

\definecolor{myred}{rgb}{.545,0,0}

\title{Bialgebraic approach to rack cohomology}%\title{Bialgebraic approach to the cohomology of racks and set-theoretic solutions to the Yang--Baxter equation}
%\title{Bialgebraic proof of the existence of cup product in the cohomology of racks and quandles}
\author{Simon Covez}
\address{LMJL, Universit\'e de Nantes, 2 chemin de la Houssini\`ere, 44322 Nantes, France}
\email{simon.covez@gmail.com}
\author{Marco Farinati}
\address{Dto. de Matem\'atica FCEyN, Universidad de Buenos Aires -- IMAS Conicet, Argentina}
\email{mfarinat@dm.uba.ar}
\author{Victoria Lebed}
\address{Hamilton Mathematics Institute, Trinity College, Dublin 2, Ireland, \emph{and} LMNO -- UMR 6139, Universit\'e de Caen--Normandie, BP 5186, 14032 Caen Cedex, France}
\email{lebed@unicaen.fr}
\author{Dominique Manchon}
\address{LMBP -- UMR 6620,
	Universit\'e Clermont--Auvergne,
	3 place Vasar\'ely, CS 60026,
        63178 Aubi\`ere, France}
        \email{Dominique.Manchon@uca.fr}
        %\urladdr{http://recherche.math.univ-bpclermont.fr/~manchon/}

\begin{document}
\begin{abstract}
We interpret the complexes defining rack cohomology in terms of certain d.g. bialgebra. This yields elementary algebraic proofs of old and new structural results for this cohomology theory. For instance, we exhibit two explicit homotopies controlling structure defects on the cochain level: one for the commutativity defect of the cup product, and the other one for the ``Zinbielity'' defect of the dendriform structure. We also show that, for a quandle, the cup product on rack cohomology \emph{restricts} to, and the Zinbiel product \emph{descends} to quandle cohomology.  Finally, for rack cohomology with suitable coefficients, we complete the cup product with a compatible coproduct. 
\end{abstract}
\keywords{Rack cohomology, quandle cohomology, 
%Yang--Baxter equation, braided cohomology,
 differential graded bialgebra, cup product, dendriform algebra, Zinbiel algebra, homotopy.}
\subjclass[2010]{%16T25, %Yang-Baxter equations
 20N02, %Sets with a single binary operation (groupoids)
 55N35, %Other homology theories
 16T10.} %Bialgebras
%57T05.}

\maketitle
\tableofcontents

\section{Introduction}

A \textit{quandle} $X$ is an algebraic structure whose axioms describe what survives from a group when one only looks at the conjugacy operation. Quandles have been intensively studied since the 1982 work D.~Joyce and S.~Matveev \cite{J82, M82}, who showed how to extract powerful knot invariants from them. But the history of quandles goes back much further. Thus \textit{racks}, which slightly generalize quandles, can be traced back to an unpublished 1959 correspondence between J.~Conway and G.~Wraith; keis, or involutive quandles, appeared in 1943 in an article of M.~Takasaki \cite{T43}; and other related structures were mentioned as early as the end of the 19th century. A thorough account, emphasizing on topological aspects through the rack space, can be found in \cite{FRS07}. Other viewpoints and various applications to algebra, topology, and set theory are treated for instance in \cite{DehBook,AG,Kinyon,QuandlesIntro, PrzSurvey}. 

\textit{Rack cohomology} $H_{\Rack}(X)$ was defined by R.~Fenn, C.~Rourke, and B.~Sanderson in 1995 \cite{FRS95}. It strengthened and extended the applications of racks. The \textit{cup product} $\smile$ on $H_{\Rack}(X)$ was first described in topological terms by F.~Clauwens \cite{Cl}. Later S.~Covez proposed a cubical interpretation, which allowed him to refine the cup product to a \textit{dendriform structure} using shuffle combinatorics, and further to a \textit{Zinbiel product} $\underleftarrow{\smile}$ using acyclic models \cite{C1,C2}. This yields in particular the commutativity of~$\smile$.

Rack cohomology is a particular case of the cohomology of set-theoretic solutions to the \textit{Yang--Baxter equation}, as constructed by J.S. Carter, M.~Elhamdadi and M.~Saito \cite{HomologyYB}. This very general cohomology theory still belongs to the cubical context. It thus carries a commutative cup product, explicitly described by M.~Farinati and J. Garc{\'{\i}}a Galofre \cite{FG}. V.~Lebed \cite{L16} gave it two new interpretations: in terms of M.~Rosso's quantum shuffles \cite{R95}, and via graphical calculus. She gave a graphical version of an explicit homotopy on the cochain level, which controls the commutativity defect of~$\smile$.

\medskip
The purpose of this paper is to study a \textit{differential graded  bialgebra} $B(X)$ that is attached to any rack $X$, and governs the algebraic structure of its cohomology. This construction was first unveiled in \cite{FG}. We show that $B(X)$ is graded cocommutative up to an explicit homotopy, which implies the commutativity of the cup product $\smile$ on $H_{\Rack}(X)$. This yields a purely algebraic version of the diagrammatic construction from \cite{L16}. On a quotient $\oB(X)$ of $B(X)$, we refine the coproduct to a d.g. codendriform structure. Even better: this codendriform structure is coZinbiel up to another explicit homotopy, which has not been described before. As a result, the associative commutative cup product on $H_{\Rack}(X)$ stems from a Zinbiel product $\underleftarrow{\smile}$, which coincides with the one from \cite{C2}.  Both the rack cohomology $H_{\Rack}(X)$ and our d.g. bialgebra $B(X)$ can be enriched with coefficients. For finite $X$ and suitably chosen coefficients, we complete $\smile$ with a compatible coassociative coproduct. Here again our bialgebraic interpretation considerably simplifies all verifications. 
 
The rack cohomology of a quandle received particular attention, starting from the work of J.S. Carter et al. \cite{QuandleHom} and R.A. Litherland and S.~Nelson \cite{LiNe}. It is known to split into two parts, called \emph{quandle} and \emph{degenerate}: $H_{\Rack}=H_{\Quandle}\oplus H_{\Deg}$. The degenerate part $H_{\Deg}$ is far from being empty, since it contains (a shifted copy of) $H_{\Quandle}$ as a direct factor: $H_{\Deg}=H_{\Quandle}[1]\oplus H_{\Late}$. However, as an abelian group, it does not carry any new information: as shown by J.~Przytycki and K.~Putyra \cite{PrzPutyra}, it is completely determined by $H_{\Quandle}$. We recover these cohomology decompositions at the bialgebraic level, and show that the cup product on rack cohomology restricts to $H_{\Quandle}$. This result is new to our knowledge. Rather unexpectedly, our proof heavily uses the Zinbiel product $\underleftarrow{\smile}$ refining $\smile$, even though $\underleftarrow{\smile}$ does not \textit{restrict} to $H_{\Quandle}$. What we show is that $\underleftarrow{\smile}$ \textit{induces} a Zinbiel product on $H_{\Quandle}$: for this we need to regard $H_{\Quandle}$ as a quotient rather than a subspace of $H_{\Rack}$. Our results suggest the following questions:
\begin{enumerate}
\item Does $H_{\Deg}$ allow one to reconstruct $H_{\Quandle}$ as a Zinbiel, or at least associative, algebra?
\item In the opposite direction, does $H_{\Quandle}$ determine the whole rack cohomology  $H_{\Rack}$ as a Zinbiel, or at least associative, algebra?
\end{enumerate}

%For pedagogical reasons, we chose to start with a complete analysis of rack cohomology, and to postpone the more general braided cohomology until Section~\ref{S:YBE}.

\medskip
\noindent\textbf{Acknowledgements} M. F. is a Research member of Conicet, partially supported by 
PIP 112-200801 -00900,
PICT 2006 00836, UBACyT X051
and MathAmSud 10-math-01 OPECSHA. 
V. L. was partially supported by a Hamilton Research Fellowship (Hamilton Mathematics Institute, Trinity College Dublin).
%Other acknowledgements???

%%%%%%
\section{Cohomology of shelves and racks}\label{S:RackCohom}
%%%%%%
We start with recalling the classical complexes defining rack homology and cohomology. They will be given a bialgebraic interpretation in Section~\ref{S:Bialgebra}. The consequences of this interpretation will be explored in the remainder of the paper. 

A \emph{shelf} is a set $X$ together with a binary operation $\t \colon X\times X\to X, (x,y)\mapsto x\t y$ (sometimes denoted  by $x\t y=x^y$) satisfying the self-distributivity axiom 
\begin{equation}\label{E:SD}
(x\t y)\t z=(x\t z)\t (y\t z)
\end{equation}
for all $x,y,z\in X$. In exponential notation, it reads $(x^y)^z=\left(x ^z\right)^{\left(y ^z\right)}$. A shelf is called a \emph{rack} if the maps $-\t y\colon X\to X$ are bijective for all $y\in X$; a \emph{spindle} if $x\t x=x$ for all $x\in X$; and a \emph{quandle} if it is both a rack and a spindle. The fundamental family of examples of quandles is given by groups~$X$ with $x\t y=y ^{-1}xy$.

Define $C_n(X)=\ZZ X^n$ to be the free abelian group with basis $X^n$, and put $C^n(X)=\ZZ^{X^n}\cong \Hom(C_n(X),\ZZ)$. Define the differential $\partial \colon C_{\bullet}(X)\to C_{\bullet-1}(X)$ as the linearization of
\begin{equation}\label{bord-racks}
\partial(x_1\cdots x_n)=
\sum_{i=1}^{n}(-1)^{i-1} \left(x_1\cdots \wh{x_i} \cdots x_n
-x_1^{x_i}\cdots x_{i-1}^{x_i} x_{i+1} \cdots x_n\right),
\end{equation}
where $\wh x_i$ means that $x_i$ was omitted. Here and below we denote by $x_1\cdots x_n$ the element $(x_1,\ldots, x_n)$ of $X^n$. For cohomology, the differential is taken to be $\partial ^*\colon C^{\bullet}(X)\to C ^{\bullet+1}(X)$. These maps are of square zero (by direct computation, or see Remark \ref{rem2} later) and define respectively the \emph{rack\footnote{The terminology is inconsistent here: indeed rack (co)homology was originally defined for racks, and only later was it realized that it works and is interesting, more generally, for shelves. The same goes for the quandle (co)homology of spindles, considered in Section~\ref{S:Q}.} homology $H^{\Rack}(X)$ and cohomology $H_{\Rack}(X)$} of the shelf $X$.

In knot theory, a quandle $Q$ can be used to color arcs of knot diagrams; a coloring rule involving the operation $\t$ is imposed at each crossing. The three quandle axioms are precisely what is needed for the number of $Q$-colorings of a diagram to depend on the underlying knot only. These $Q$-coloring counting invariants can be enhanced by Boltzmann-type weights, computed using a $2$-cocycle of $Q$. Similarly, $n$-cocycles of $Q$ yield invariants of $(n-1)$-dimensional surfaces knotted in $\RR^{n+1}$. Now, together with arcs one can color diagram regions. The colors can be taken from a $Q$-set, and the weights are given by cocycles with coefficients, which we will describe next.

Given a shelf $X$, an \emph{$X$-set} is a set $S$ together with a map $\bt \colon S\times X\to S$ satisfying 
\begin{equation}\label{E:Xset}
(x\bt y)\bt z=(x\bt z)\bt (y\t z)
\end{equation}
for all $x \in S$, $y,z\in X$. The basic examples are
\begin{enumerate}
\item $X$ itself, with $\t$ as the action map $\bt$;
\item any set with the trivial action $x \bt y = x$;
\item the \emph{structure monoid} $M(X)$ of $X$ (denoted simply by $M$ if $X$ is understood), which is a quadratic monoid defined by generators and relations as follows:
\[M(X)=\langle\, X \, : \, yx^y=xy \text{ for all } x,y \in X\, \rangle;\]
here the action map $\bt$ is concatenation in $M$.
\end{enumerate}
An $X$-set can also be seen as a set with an action of the monoid $M(X)$.

More generally, an \emph{$X$-module} is an abelian group $R$ together with a map $\bt \colon R\times X\to R$ (often written exponentially) which is linear in $R$, and obeys relation~\eqref{E:Xset} for all $x \in R$, $y,z\in X$. In other words, it is an $M(X)$-module. The basic examples are the linearization $\ZZ S$ of an $X$-set $S$, or any abelian group with the trivial action.

Take a shelf $X$ and an $X$-module $R$. Take the free $R$-module $C_n(X,R)=R X^n$ with basis $X^n$, and put $C^n(X,R)= \Hom(C_n(X,R),\ZZ)$. The differential $\partial$ on $C_n$ is the linearization of
\begin{equation}\label{bord-racks-coeffs}
\partial(r x_1\cdots x_n)=
\sum_{i=1}^{n}(-1)^{i-1} \left(r x_1\cdots \wh{x_i} \cdots x_n
-r^{x_i} x_1^{x_i}\cdots x_{i-1}^{x_i} x_{i+1} \cdots x_n\right),
\end{equation}
and the differential on $C^n$ is the induced one. Again, these maps are of square zero and define respectively the \emph{rack homology $H^{\Rack}(X,R)$ and cohomology $H_{\Rack}(X,R)$ of the shelf $X$ with coefficients in $R$}. If $R$ is the linearization of an $X$-set $S$, we use notations $C_{\Rack}(X,S)$, $H_{\Rack}(X,S)$ etc. Choosing as $S$ the empty set, one recovers the previous definitions. Another interesting coefficient choice is the \emph{structure algebra} $A(X)$ of $X$ (often denoted simply by $A$), which is the monoid algebra of $M(X)$:
\[A(X)=\ZZ M(X) \cong \ZZ\langle X\rangle\,\big/\,\langle yx^y-xy\, : \, x,y \in X \rangle.\]
Declaring every $x\in X$ group-like, one gets an associative bialgebra structure on $A$. This coefficient choice is universal in the following sense. Any $X$-module $R$ is a right $M(X)$-module, hence a right $A(X)$-module. Then one has an obvious isomorphism of chain complexes
\[C_n(X,R) \cong R \underset{A(X)}{\otimes} C_n(X,M(X)),\]
where $A$ acts on the first factor of $C_n(X,M)\cong A\otimes_{\ZZ} \ZZ X^n$ by multiplication on the left, and the differential acts on the second factor of $R \otimes_{A} C_n(X,M)$.

\section{A d.g. bialgebra associated to a shelf}\label{S:Bialgebra}

The algebraic objects introduced in this section are aimed to yield a simple and explicit description of a differential graded algebra structure on the complex $(C^\bullet(X),\partial^*)$ above, which is commutative, and in fact even Zinbiel, up to explicit homotopies. %hhh $(C^\bullet(X,M(X),k),\partial^*)$

Fix a shelf $X$. All the (bi)algebra structures below will be over $\ZZ$,  and will be (co)unital. Also, the tensor product $A\ot B$ of two graded algebras will always be endowed with the product algebra structure involving the Koszul sign:
\[(a_1 \ot b_1)(a_2 \ot b_2)=(-1)^{|b_1||a_2|}a_1 a_2 \ot b_1 b_2,\]
where $b_1 \in B$ and and $a_2 \in A$ are homogeneous of degree $|b_1|$ and $|a_2|$ respectively. The Koszul sign also appears when $A^*\ot B^*$ acts on $A\ot B$. Similarly, by a (co)derivation on a graded (co)algebra we will always mean a super-(co)derivation, and by commutativity we will mean super-commutativity.

%If one considers the group $G=G(X)$ freely generated by $X$ with the same relations, then $k[G]$ is the (non-commutative) localization of $A$, where one has inverted the elements of $X$.

Define $B(X)$ (also denoted by $B$) as the algebra freely generated by two copies of $X$, with the following relations:
\[B(X)=\ZZ\langle x, e_y \, : \, x,y\in X\rangle \,\big/\, \langle yx^y-xy,ye_{x^y}-e_xy \, : \, x,y \in X \rangle.\]
The interest of this construction lies in the rich structure it carries:
\begin{thm}\label{dgb}
For any shelf $X$, $B(X)$ is a differential graded bialgebra and a differential graded $A(X)$-bimodule, where
\begin{itemize}
\item the grading is given by declaring $|e_x|=1$ and $|x|=0$ for all $x\in X$;
\item the differential $d$ is the unique derivation of degree $-1$ determined by
\[
d(e_x)=1-x,\qquad d(x)=0 \qquad \text{ for all } x\in X;
\]
\item the comultiplication $\Delta \colon B \to B\ot B$ and the counit $\varepsilon \colon B \to k$ are defined on the generators by
\begin{align*}
\Delta(e_x)&=e_x\ot x+1\ot e_x,  &\Delta(x)&= x\ot x \qquad \text{ for all } x\in X,\\
\varepsilon(e_x)&=0,  &\varepsilon(x)&=1 \qquad \text{ for all } x\in X,
\end{align*}
and extended multiplicatively; %(in the graded sense, i.e., with Koszul 
\item the $A$-actions $\lambda \colon A \otimes B \to B$ and $\rho \colon B \otimes A \to B$ are defined by  
\[x \cdot b \cdot y= xby \qquad \text{ for all } x,y \in X,\ b \in B.\]
\end{itemize} 
\end{thm}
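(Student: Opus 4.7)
My plan is to verify each piece of the claimed structure on the free algebra $\ZZ\langle x,e_y\rangle$ and descend to the quotient $B(X)$ by checking compatibility with the two defining relations. First, the grading descends because $yx^y-xy$ and $ye_{x^y}-e_xy$ are homogeneous of degrees $0$ and $1$. Extending $d$ as a degree $-1$ derivation on the free algebra, the nontrivial verification is on the second relation: using $d(y)=0$ one gets $d(ye_{x^y}-e_xy)=y(1-x^y)-(1-x)y=xy-yx^y$, which vanishes in $B(X)$. The first relation involves only degree $0$ factors and is annihilated automatically. Then $d^2=0$ follows because $d^2$ is a degree $-2$ derivation, and $d^2(e_x)=d(1-x)=0$ while $d^2(x)=0$.

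For the coalgebra structure, I extend $\Delta$ and $\varepsilon$ as algebra morphisms from the free algebra and check they pass to the quotient. The first relation is preserved thanks to the group-likeness of $X$: $\Delta(yx^y)=yx^y\otimes yx^y=xy\otimes xy=\Delta(xy)$, and $\varepsilon(yx^y-xy)=0$. For the second, a careful application of the Koszul product rule in $B\otimes B$ yields
\[
\Delta(ye_{x^y})=ye_{x^y}\otimes yx^y+y\otimes ye_{x^y},\qquad \Delta(e_xy)=e_xy\otimes xy+y\otimes e_xy,
\]
whose matching pairs of terms coincide modulo the defining relations, and $\varepsilon(ye_{x^y}-e_xy)=0$. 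Coassociativity and the counit axioms then hold because both sides of each identity are algebra morphisms that agree on generators: for $e_x$, the iterated coproduct computes as $e_x\otimes x\otimes x+1\otimes e_x\otimes x+1\otimes 1\otimes e_x$ either way.

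To establish compatibility between $d$ and $\Delta$, I observe that both $\Delta\circ d$ and $d_{B\otimes B}\circ\Delta$ are degree $-1$ derivations from $B$ into the $B$-bimodule $B\otimes B$ (with bimodule structure pulled back along $\Delta$), so it suffices to check them on generators: for $e_x$ both give $1\otimes 1-x\otimes x$, and for $x$ both give $0$. Finally, the degree $0$ subalgebra of $B(X)$ is presented by the first relation alone, so it is canonically $A(X)$; left and right multiplication by its elements yields the claimed graded $A$-bimodule structure, and $d$ is automatically $A$-linear since $d$ vanishes in degree $0$. The main bookkeeping challenge throughout is managing the Koszul signs in the coproduct and in the tensor-product differential; once those are under control, every verification collapses to a short calculation on the four types of generators.
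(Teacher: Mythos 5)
Your proof is correct and follows essentially the same route as the paper's: every verification is reduced to the generators and to compatibility of $d$, $\Delta$, $\varepsilon$ with the two defining relations, with the same key computations ($d(ye_{x^y}-e_xy)=xy-yx^y$ landing in the ideal, and the matching of $\Delta(ye_{x^y})$ with $\Delta(e_xy)$ modulo the ideal, Koszul signs included). One small over-claim: identifying the degree-$0$ subalgebra of $B(X)$ with $A(X)$ requires the normal form of Lemma~\ref{Bvscomplejo} (injectivity of $A\to B$), but for the bimodule structure you only need that the relations of $A$ hold in $B$ — which is immediate — so this does not affect the argument.
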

By differential graded bialgebra we mean that the differential is both a derivation with respect to multiplication, and coderivation with respect to comultiplication.

Notice that $B$ is neither commutative nor cocommutative in general.

As usual, from Theorem~\ref{dgb} one deduces a d.g. algebra and a d.g. $A(X)$-bimodule structures on the graded dual $B^*(X)$ of $B(X)$.

\begin{proof}
Since the relations are homogeneous, $B$ is a graded algebra. 
In order to see that $d$ is well defined, one must check that the relations
$yx^y\sim xy$ and $ye_{x^y}\sim e_xy$ are compatible with $d$. The first relation
is easier:
\[
d(yx^y-xy)=
d(y)x^y+yd(x^y)-d(x)y-xd(y)=0+0-0-0=0.
\]
For the second relation, one has
\[
d(ye_{x^y}-e_xy)=
yd(e_{x^y})-d(e_x)y=
y(1-x^y)-(1-x)y=
y-yx^y-y+xy=xy-yx^y.
\]
So the ideal of relations defining $B$ is stable by $d$.

Since $d$ is a derivation and $d^2$ vanishes on generators, we have $d^2=0$, hence a structure of differential graded algebra on $B$. 

Next, we need to check that $\Delta$ is well defined. The first relation is easy since
all $x\in X$ are group-like in $B$:
\[
\begin{array}{ccl}
\Delta(xy-yx^y) &=&(x\ot x)(y\ot y)-(y\ot y)(x^y\ot x ^y) 
\\ &=& xy\ot xy-yx^y\ot yx^y
\\ &=& xy\ot (xy-yx^y)+(xy-yx^y)\ot yx^y.
\end{array}
\]
For the second relation, we check:
\[
\begin{array}{rcl}
\Delta(ye_{x^y}-e_xy)&
=&
(y\ot y)(e_{x^y}\ot x^y+1\ot e_{x^y})
-(e_x\ot x+1\ot e_x)(y\ot y)\\
&=&
ye_{x^y}\ot yx^y+y\ot ye_{x^y}
-e_xy\ot xy-y\ot e_xy\\
&=&
(ye_{x^y}-e_xy)\ot yx^y
+
e_xy\ot (yx^y-xy)
+y\ot (ye_{x^y}- e_xy).
\end{array}
\]
So the ideal defining the relations is also a coideal.

Clearly, $\Delta$ respects the grading.

Let us now check that $d$ is a coderivation. It is enough to see this on generators:
\begin{eqnarray*}
\Delta(d(x))&=&\Delta(0)=0=d(x)\ot x+x\ot d(x)=(d\ot 1+1\ot d)\Delta (x);\\
\Delta(d(e_x))&=&\Delta(1-x)=1\ot 1-x\ot x,
\end{eqnarray*}
which coincides with
\begin{eqnarray*}
(d\ot 1+1\ot d)\Delta (e_x)&=&
(d\ot 1+1\ot d)(e_x\ot x+1\ot e_x)\\
&=&
(1-x)\ot x+1\ot (1-x)=
1\ot x-x\ot x+1\ot 1-1\ot x\\
&=&
1\ot 1-x\ot x.
\end{eqnarray*}

The map $\varepsilon$ is also well defined, since
\[\varepsilon(xy-yx^y)=\varepsilon(ye_{x^y}-e_xy)=0.\]
An easy verification on the generators shows that it is indeed a counit.

Finally, the formula $x \cdot b \cdot y= xby$ obviously defines commuting degree-preserving $A$-actions on $B$. By the definition of the differential $d$, one has $d(xby)=xd(b)y$, thus $d$ respects this bimodule structure.
\end{proof}

\begin{example}\label{exDelta}
Let us compute $\Delta(e_xe_y)$. By definition, $\Delta(e_xe_y)=\Delta(e_x)\Delta(e_y)$ in $B\ot B$, and this is equal to
\begin{eqnarray*}
(e_x\ot x+1\ot e_x)(e_y\ot y+1\ot e_y)
&=&e_xe_y\ot xy+ e_x\ot xe_y
-e_y\ot e_xy+1\ot e_xe_y\\
&=&e_xe_y\ot xy+ e_x\ot xe_y
-e_y\ot ye_{x^y}+1\ot e_xe_y.
\end{eqnarray*}
Note the Koszul sign appearing in the product $(1\ot e_x)( e_y \ot y) =-e_y\ot e_xy$.
\end{example}

The structure on $B(X)$ survives in homology:
\begin{prop}\label{prop:dgb_hom}
For any shelf $X$, the homology $H(X)$ of $B(X)$ inherits a graded algebra structure. Moreover, the $A(X)$-actions on $B(X)$ induce trivial actions on $H(X)$: we have
\[ x \cdot h \cdot y= h \qquad \text{ for all }x,y \in X,\ h \in H.\]
Dually, the cohomology $H^{\ast}(X)$ of $B(X)^{\ast}$ inherits a graded algebra structure and trivial $A(X)$-actions.
\end{prop}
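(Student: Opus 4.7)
The first assertion is a standard general fact about d.g. algebras: since Theorem~\ref{dgb} tells us that the differential $d$ on $B(X)$ is a super-derivation with respect to the multiplication, the graded Leibniz rule implies that the product of two cycles is a cycle, and that a cycle times a boundary (or a boundary times a cycle) is a boundary. The product therefore descends to a well-defined graded associative product on $H(X)$, and the class of $1 \in B_0(X)$ gives the unit.

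The plan for the triviality of the $A(X)$-actions is to use the elements $e_x$ as explicit contracting homotopies for the generators $1 - x$. By definition $d(e_x) = 1 - x$, so for any cycle $b \in B(X)$ of degree $n$, the derivation property gives
\[d(e_x b) = d(e_x)\, b - e_x\, d(b) = (1-x)\, b,\]
and similarly
\[d(b\, e_y) = d(b)\, e_y + (-1)^n b\, d(e_y) = (-1)^n b\, (1-y).\]
Hence $xb$ and $b$ differ by a boundary, as do $by$ and $b$, so $[x \cdot b] = [b] = [b \cdot y]$ in $H(X)$. Iterating the argument (or using that the $X$-generators generate $A(X)$ as an algebra, and that trivially-acting elements form a subalgebra on which all of $A$ lands after passing to homology) yields the stated identity $a \cdot h \cdot a' = \varepsilon(a)\varepsilon(a') h$ for all $a, a' \in A(X)$, $h \in H(X)$.

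For the cohomology statement, the graded algebra structure on $H^\ast(X)$ is obtained either by dualizing the argument above (applied to the d.g. algebra structure on $B(X)^\ast$ mentioned just before the proposition), or equivalently by a direct Leibniz argument on $B(X)^\ast$. The triviality of the $A(X)$-bimodule structure on $H^\ast(X)$ is then immediate from the triviality on $H(X)$: under the canonical pairing, for a cocycle $\varphi$ and a cycle $b$ one has $\langle \varphi \cdot x, b \rangle = \langle \varphi, x \cdot b \rangle = \langle \varphi, b \rangle$ modulo coboundaries, since $x \cdot b - b$ is a boundary and $\varphi$ vanishes on boundaries. The same holds for the right action. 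No delicate step is anticipated; the whole proof hinges on the observation that each generator $x \in X$ is homologous to $1$ via the explicit primitive $e_x$.
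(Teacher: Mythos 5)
Your proposal is correct and follows essentially the same route as the paper: the graded algebra structure on homology is the classical consequence of $d$ being a derivation, and the triviality of the $A(X)$-actions comes from the identity $d(e_x b) = (1-x)b - e_x d(b)$, which exhibits $(1-x)b$ as a boundary for any cycle $b$ (with the right action and the cohomological statements handled analogously, exactly as in the paper).
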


Here and below by $B(X)^{\ast}$ we mean the graded dual of $B(X)$.

\begin{proof}
The only non-classical statement here is the triviality of the induced actions. Take $x \in X,\ b \in B$. By the definition of the differential $d$, one has $d(e_x b) = d(e_x)b -e_xd(b)$, hence
\begin{equation}\label{E:L1property}
d(e_x b) =(1-x)b -e_xd(b).
\end{equation}
If $b$ is a cycle, this shows that $x \cdot b = b$ modulo a boundary. Hence the induced left $A$-action on $H$ is trivial. The cases of the right action and the actions in cohomology are analogous.
\end{proof}

The proposition implies the following remarkable property of $H$: if $b \in B$ is a representative of some homology class in $H$, and if one lets an $x \in X$ act upon all the letters from $X$ occurring in $b$ (where the action is $y \mapsto y^x$), then one obtains another representative of the same homology class.

One can also define a version of the bialgebra $B(X)$ with coefficients in any unital commutative ring $k$: 
\[B(X,k)=k\langle x, e_y \, : \, x,y\in X\rangle \,\big/\, \langle yx^y-xy,ye_{x^y}-e_xy \, : \, x,y \in X \rangle.\]
In particular, all the tensor products should be taken over~$k$. Theorem~\ref{dgb} and its proof extend verbatim to this setting. For suitable coefficients $k$, one can say even more:

\begin{prop}\label{prop:dgb_hom_bialg}
For any shelf $X$ and any field $k$, the homology $H(X,k)$ of $B(X,k)$ inherits a graded bialgebra structure. If moreover $X$ is finite, then the cohomology $H^{\ast}(X,k)$ of $B(X,k)^{\ast}$ also inherits a graded bialgebra structure.
\end{prop}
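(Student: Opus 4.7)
\textit{Proof plan.}

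\emph{Homology.} By Theorem \ref{dgb}, the multiplication $m$, comultiplication $\Delta$, unit $\eta$, and counit $\varepsilon$ of $B = B(X, k)$ are all chain maps (the cases of $m$ and $\Delta$ use that $d$ is simultaneously a derivation and coderivation). Over the field $k$, the Künneth theorem yields a natural graded iso $\kappa \colon H(B) \otimes H(B) \xrightarrow{\sim} H(B \otimes B)$; composing, one obtains induced bialgebra operations $m_\ast = H(m) \circ \kappa$, $\Delta_\ast = \kappa^{-1} \circ H(\Delta)$, and similarly for unit and counit on $H(X, k)$. All bialgebra axioms hold in $B$ as chain-map identities and descend to $H(X, k)$ by naturality of $\kappa$, giving the graded bialgebra structure.

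\emph{Cohomology.} Over a field, linear duality is exact, giving a natural isomorphism $H^n(B^\ast) \cong H_n(B)^\ast$ for each $n$ (no finite-dimensionality is needed here, since an exact sequence of $k$-vector spaces dualizes to an exact sequence). The graded dual of a graded bialgebra is again a graded bialgebra, \emph{provided} it is locally finite-dimensional: the coproduct on the dual, corresponding to the product on $H_\ast(B)$, only exists with target $H_\ast(B)^\ast \otimes H_\ast(B)^\ast$ when the natural inclusion into $(H_\ast(B) \otimes H_\ast(B))^\ast$ is surjective. So the task reduces to showing that $H_n(B(X, k))$ is finite-dimensional when $X$ is finite; granting this, the bialgebra structure on $H_\ast(B)^\ast$ transports along the iso to $H^\ast(X, k)$.

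\emph{Finite-dimensionality and main obstacle.} Each $B_n$ is free of rank $|X|^n$ as a left $A(X, k)$-module, via the relation $y e_{x^y} = e_x y$ that lets one push every $y \in X$ to the left of all $e$'s. Proposition \ref{prop:dgb_hom} together with equation \eqref{E:L1property} furnishes the explicit null-homotopy $h_x(b) = e_x b$ of left multiplication by $1 - x$, so the augmentation ideal of $A$ acts by zero on $H_\ast(B)$. The plan is to promote this into a quasi-isomorphism $B \simeq k \otimes_A B$; since $k \otimes_A B$ is isomorphic by direct inspection of the induced differential to the classical rack chain complex $C_\ast(X, k)$ with trivial coefficients, its degree-$n$ piece has $k$-dimension $|X|^n$, yielding $\dim_k H_n(B(X, k)) \leq |X|^n < \infty$. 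The main obstacle is making this quasi-isomorphism rigorous: the individual null-homotopies $h_x$ trivialize multiplication by a single $1 - x$, but assembling them into a single homotopy killing the whole augmentation ideal on the chain level (not merely on homology classes) requires either a spectral-sequence argument (filtering $B$ by powers of the augmentation ideal, where the $E^2$-page involves $\mathrm{Tor}^A_\ast(k, k) \otimes H_\ast(B)$) or an explicit Koszul-type double-complex construction.
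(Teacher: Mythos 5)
The homology half of your argument is correct and is in substance the paper's own: over a field the coproduct descends to homology, and the paper proves exactly the Künneth-type fact you invoke by hand (decomposing $C=\Ker(d)\oplus L$ and tracking where $\Delta(\Ker d)$ can land). The problem is in the cohomology half.

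There you reduce the statement to finite-dimensionality of $H_n(B(X,k))$ for finite $X$, and propose to prove this by upgrading the null-homotopies $h_x(b)=e_xb$ into a quasi-isomorphism $B\simeq k\otimes_A B=\oB$. That quasi-isomorphism is false, so no amount of spectral-sequence or Koszul machinery will produce it. Triviality of the induced $A$-action on $H(B)$ (Proposition~\ref{prop:dgb_hom}) does not imply that $B\to k\otimes_A B$ is a quasi-isomorphism; this is the same fallacy as arguing that the complex computing $H_*(G,\ZZ G)$ for a group $G$ is quasi-isomorphic to the one computing $H_*(G,\ZZ)$ because $G$ acts trivially on $H_*(G,\ZZ G)=\ZZ$. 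Concretely, for the one-element quandle one has $A=k[t]$, $B_n=k[t]\,e^n$, $d(e^{2k})=0$ and $d(e^{2k+1})=(1-t)e^{2k}$, whence $H_1(B)=0$; but in $\oB$ the induced differential vanishes and $H_1(\oB)=k$. Your proposed filtration by powers of the augmentation ideal would at best relate $H(\oB)$ to $\mathrm{Tor}^A_*(k,k)$ and $H(B)$, and the higher $\mathrm{Tor}$ groups do not vanish, so it cannot rescue the claim. As it stands, the finite-dimensionality of $H_n(B(X,k))$ — the pivot of your whole cohomology argument — is left unproved.

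The paper sidesteps this question entirely: its lemma dualizes the \emph{product} at the chain level (for a d.g.\ algebra of finite dimension in each degree, $C^*$ becomes a d.g.\ coassociative coalgebra), and then applies the homology-descent statement to the d.g.\ coalgebra $C^*$; no control of $\dim_k H_n(B)$ is needed. If you prefer your route via $H^n(B^*)\cong H_n(B)^*$, you must supply an independent proof that $H_n(B(X,k))$ is locally finite. Note finally that chain-level local finiteness itself needs care in either approach: $B_0=A(X,k)=kM(X)$ is infinite-dimensional even for finite $X$ with respect to the homological grading, so the graded dual has to be taken with respect to a finer grading; your claim that $B_n$ is free of rank $|X|^n$ over $A$ makes this issue visible but does not resolve it.
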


This results from the following general observation; it is surely known to specialists, however the authors were unable to find it in the literature.
\begin{lem}
Let $k$ be a field.
\begin{enumerate}
\item If $(C=\oplus C_i,d,\Delta)$ is a $k$-linear d.g. coassociative coalgebra, then $\Delta$ induces a coproduct on the homology $H$ of $(C,d)$.
\item If $(C=\oplus C_i,d,\cdot)$ is a $k$-linear d.g. algebra of finite dimension in each degree, then $\cdot$ induces a coproduct on the cohomology $H^{\ast}$ of $(C^{\ast},d^{\ast})$.
\end{enumerate} 
\end{lem}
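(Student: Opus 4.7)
The plan is to establish part (1) directly from the Künneth formula over a field, and then to deduce part (2) by duality.

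For part (1), I would first observe that $\Delta \colon C \to C \otimes C$ is a morphism of chain complexes: the hypothesis that $d$ is a coderivation is precisely the identity $\Delta \circ d = (d \otimes \mathrm{id} + \mathrm{id} \otimes d) \circ \Delta$, whose right-hand side is the differential of the tensor product complex (with the Koszul sign baked in). Hence $\Delta$ induces a map $H(\Delta) \colon H(C) \to H(C \otimes C)$. Since $k$ is a field, every $k$-module is flat, and the Künneth formula yields a natural isomorphism $\kappa \colon H(C) \otimes H(C) \to H(C \otimes C)$. The induced coproduct is then $\overline{\Delta} := \kappa^{-1} \circ H(\Delta)$. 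Coassociativity of $\overline{\Delta}$ follows by chasing the evident square, using the coassociativity of $\Delta$ on $C$ together with the naturality of $\kappa$; the counit descends analogously.

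For part (2), the finite-dimensionality assumption is exactly what makes the canonical map $C^* \otimes C^* \to (C \otimes C)^*$ a degreewise isomorphism of graded vector spaces. Under this identification, dualizing the multiplication $m \colon C \otimes C \to C$ produces a graded map $m^* \colon C^* \to C^* \otimes C^*$, which is coassociative because $m$ is associative, and counital because $m$ is unital. Dualizing the graded Leibniz rule satisfied by $d$ gives the co-Leibniz rule for $d^*$, so $d^*$ is a coderivation with respect to $m^*$. Hence $(C^*, d^*, m^*)$ is a $k$-linear d.g. coassociative coalgebra, to which part (1) applies, producing the desired coproduct on $H^*(C)$.

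The main technical point to be careful about is the interaction between duality, tensor products, and Koszul signs in part (2): one has to verify that the natural identification $C^* \otimes C^* \cong (C \otimes C)^*$ is compatible with the Koszul-signed braiding, so that the dualized Leibniz rule really does express that $d^*$ is a super-coderivation in the convention fixed at the start of Section~\ref{S:Bialgebra}. Once the sign conventions are set coherently, the remaining verifications are routine, and the finite-dimensionality of each $C_i$ is the only non-formal ingredient entering the argument.
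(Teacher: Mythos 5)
Your proof is correct, but it reaches part (1) by a genuinely different route than the paper. You treat $\Delta$ as a chain map $C \to C \ot C$ (which it is, precisely because $d$ is a coderivation) and then invoke the K\"unneth isomorphism $\kappa \colon H(C) \ot H(C) \to H(C \ot C)$, which holds without any finiteness hypothesis since $k$ is a field; coassociativity then follows from naturality and coherence of $\kappa$. The paper instead gives a self-contained, elementary argument: it first notes that the coderivation identity lets $\Delta$ descend to $C/\Im(d)$, and then, choosing a complement $L$ of $K:=\Ker(d)$ on which $d$ is injective, it tracks where $d \ot \Id + \Id \ot d$ sends $L\ot L$, $K \ot L$, $L \ot K$, and $K\ot K$ to conclude that $\Delta(K) \subseteq K \ot K + \Im(d)\ot C + C \ot \Im(d)$, which is exactly what is needed to land in $H \ot H$. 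In effect the paper re-proves by hand the fragment of the K\"unneth theorem it needs, so its argument is longer but requires no external input, whereas yours is shorter and makes the role of the field hypothesis (flatness, hence no Tor correction) conceptually transparent. Your part (2) coincides with the paper's: dualize degreewise using finite dimensionality and apply part (1); your remark that the identification $C^* \ot C^* \cong (C\ot C)^*$ must respect the Koszul signs is a worthwhile point that the paper leaves implicit.
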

\begin{proof}
\begin{enumerate}
\item The relation $\Delta d = (d \otimes \Id+\Id \otimes d)\Delta$ implies that $\Delta$ survives in the quotient $C/\Im(d)$. To restrict it further to $H=\Ker(d)/\Im(d)$, we shall check that
\[\Delta(\Ker(d)) \subseteq \Ker(d) \otimes \Ker(d) + \Im(d) \otimes C + C \otimes \Im(d).\]
Since $k$ is a field, the space $K:=\Ker(d)$ has a complement $L$ in $C$, on which $d$ is injective. Putting $I:=\Im(d)$, one has
\begin{align*}
(d \otimes \Id)(L \otimes L) &= I \otimes L,& (\Id \otimes d)(L \otimes L) &= L \otimes I, \\
(d \otimes \Id+\Id \otimes d) (K \otimes L) &= K \otimes I, & (d \otimes \Id+\Id \otimes d) (L \otimes K) &= I \otimes K,\\
(d \otimes \Id+\Id \otimes d) (K \otimes K) &= 0.
\end{align*}
Moreover, in the first two lines all the maps are bijective. Now, from $(d \otimes \Id+\Id \otimes d)\Delta (K)=\Delta d (K)=0$ and from the disjointness of $L$ and $K$ (and hence $I$), on sees that $\Delta (K)$ cannot have components in $L \otimes L$, and its components from $K \otimes L$ (resp., $L \otimes K$) necessarily lie in $I \otimes L$ (resp., $L \otimes I$).
\item Due to the finite dimension in each degree, the product on $(C,d)$ induces a coproduct on $(C^{\ast},d^{\ast})$, to which we apply the first statement.  \qedhere
\end{enumerate} 
\end{proof}

\begin{rem}
The d.g. bialgebra $B(X)$ admits a variation $B'(X)$, where one adds the inverses $x^{-1}$ of the generators $x \in X$, with $|x^{-1}|=0$, $d(x^{-1})=0$, $\Delta(x^{-1})=x^{-1}\ot x^{-1}$, $\varepsilon(x^{-1})=1$. One obtains a d.g. Hopf algebra, with the antipode defined on the generators by
\[s(x)=x^{-1},\, s(x^{-1})=x, \, s(e_x)=-e_xx^{-1},\]
and extended super-anti-multiplicatively. Indeed, one easily verifies that this map is well defined, of degree $0$, yields the inverse of $\Id$ in the convolution algebra, and commutes with the differential $d$. For the square of the antipode, one computes $s^2(e_x)=xe_xx^{-1}$. In a spindle it equals $e_x$, yielding $s^2=\Id$. In general $s$ need not be of finite order: for the rack $X=\ZZ$ with $x^y=x+1$, one has $s^2\textit{•}(e_{x})=e_{x-1}$. In a rack, one simplifies $s^2(e_x)=e_{x \tt x}$, where the operation $\tt$ is defined by $(x \t y) \tt y = x$ for all $x,y \in X$. The map $x \mapsto x \tt x$ plays an important role in the study of racks; see for instance \cite{Szymik}. Finally, in the computation
\begin{align*}
s(e_{x_1}e_{x_2}\cdots e_{x_{n-1}} e_{x_n}) &= (-1)^{n \choose 2}(- e_{x_n}x_n^{-1})(- e_{x_{n-1}}x_{n-1}^{-1}) \cdots (-e_{x_2}x_2^{-1})(-e_{x_1}x_1^{-1}) \\
&=(-1)^{\frac{n(n+1)}{2}}\, e_{x_n} \, e_{x_{n-1}^{x_n}} \cdots e_{x_2^{x_3\cdots x_n}}\, e_{x_1^{x_2\cdots x_n}} \, x_n^{-1} \cdots x_1^{-1}
\end{align*}
one recognizes the \emph{remarkable map}
\[(x_1,x_2,\ldots,x_n) \mapsto (x_1^{x_2\cdots x_n},x_2^{x_3\cdots x_n},\ldots,x_n)\]
of J.~Przytycki \cite{Prz1}.
\end{rem}

%%%%%%
\section{The bialgebra encodes the cohomology}\label{S:BialgebraHom}
%%%%%%

We will now show that the d.g. bialgebra $B(X)$ knows everything about the homology $(C_\bullet,\partial)$ and the cohomology $(C^\bullet,\partial^*)$ of our shelf $X$, and about its variations $(C_\bullet^M,\partial)$ and $(C^\bullet_M,\partial^*)$ with coefficients in the structure monoid $M(X)$. 

First, we need to modify $B$ slightly: 
\begin{lem}\label{L:Bquotient}
The following data define a d.g. coalgebra and a right d.g. $A$-module:
\[({\ZZ}\ot_A B,\ \Id_{\ZZ}\ot d,\ \Id_{\ZZ}\ot \Delta,\ \Id_{\ZZ}\ot \varepsilon,\ \Id_{\ZZ}\ot \rho).\]
Here the grading is the one induced from~$B$, and the $A$-action on ${\ZZ}$ is the trivial one: $\lambda \cdot x=\lambda$ for all $x\in X,\ \lambda \in {\ZZ}$.
\end{lem}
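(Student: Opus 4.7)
The plan is to check, one structure map at a time, that each of $d$, $\Delta$, $\varepsilon$, $\rho$ descends from $B$ to the quotient $\ZZ \otimes_A B$, and that the coalgebra and module axioms are inherited from those already established for $B$ in Theorem~\ref{dgb}. The grading is obviously inherited because all the structural ingredients of the $A$-bimodule structure on $B$ preserve the degree.

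First I would handle the differential. By Theorem~\ref{dgb}, $d$ is an $A$-bimodule map, so in particular left $A$-linear; therefore $\Id_\ZZ \otimes d$ is well-defined on $\ZZ \otimes_A B$ and squares to zero. Next, the right action $\rho$ descends because the left and right $A$-actions on $B$ commute, and it continues to commute with $\Id_\ZZ\otimes d$ for the same reason. The counit $\varepsilon$ descends because on the generators $\varepsilon(x)=1$ and $\varepsilon(e_x)=0$, so multiplicatively $\varepsilon(xb)=\varepsilon(x)\varepsilon(b)=\varepsilon(b)$ for all $x\in X$ and $b\in B$; this matches the trivial right $A$-action on $\ZZ$.

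The most delicate point is the coproduct, which is the step I expect to be the main obstacle: $\Delta$ takes values in $B\otimes B$, and I need to interpret $\Id_\ZZ\otimes \Delta$ as a map landing in $(\ZZ\otimes_A B)\otimes(\ZZ\otimes_A B)$. For this I would consider the composition
\[
B \xrightarrow{\ \Delta\ } B\otimes B \xrightarrow{\ \pi\otimes\pi\ } (\ZZ\otimes_A B)\otimes(\ZZ\otimes_A B),
\]
where $\pi\colon B\twoheadrightarrow \ZZ\otimes_A B$ is the canonical projection, and verify that it factors through $\pi$. Because every $x\in X$ is grouplike and of degree $0$, multiplicativity of $\Delta$ gives $\Delta(xb)=(x\otimes x)\Delta(b)=\sum xb_{(1)}\otimes xb_{(2)}$ (no Koszul sign appears since $|x|=0$). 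Applying $\pi\otimes \pi$ kills the leading $x$'s on each tensor factor, yielding $\sum\pi(b_{(1)})\otimes\pi(b_{(2)})=(\pi\otimes\pi)\Delta(b)$, as required. Hence $\Delta$ descends to a well-defined coproduct on $\ZZ \otimes_A B$.

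Once the four maps are shown to descend, the remaining axioms -- coassociativity, counitality, compatibility of $d$ with $\Delta$ as a coderivation, and compatibility of the right $A$-action with everything -- are inherited immediately by naturality, since the quotient map $\pi$ is surjective and each axiom holds on $B$ by Theorem~\ref{dgb}. This yields the d.g.\ coalgebra and right d.g.\ $A$-module structure asserted in the lemma.
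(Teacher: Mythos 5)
Your proposal is correct and follows essentially the same route as the paper: identify $\ZZ\otimes_A B$ with $B$ modulo $xb\sim b$, then check on each structure map that the leading group-like degree-zero generator $x$ can be stripped off ($d(xb)=xd(b)$, $\Delta(xb)=(x\otimes x)\Delta(b)$, $\varepsilon(xb)=\varepsilon(b)$, $(xb)\cdot y=x(by)$), with the axioms themselves inherited from Theorem~\ref{dgb} by surjectivity of the quotient map. Your extra care in phrasing the coproduct step as a factorization of $(\pi\otimes\pi)\Delta$ through $\pi$ is a slightly more explicit version of what the paper leaves implicit, but it is the same argument.
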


The d.g. coalgebra from the lemma will be denoted by $\oB=\oB(X)$. It has obvious variants $\oB(X,k)$ with coefficients in any unital commutative ring~$k$.

\begin{proof}
On the level of abelian groups, one has
\begin{equation}\label{E:BBar}
\oB \simeq B\,\big/\,\langle xb - b\, : \, x \in X,\ b \in B \rangle.
\end{equation} 
The grading survives in this quotient since $|x|=0$. So does the degree $-1$ differential, as $d(x)=0$ and $|x|=0$ imply $d(xb)=xd(b) \sim d(b)$. Further, we have $\Delta(xb)=(x \ot x)\Delta(b) \sim \Delta(b)$, so $\Delta$ induces a coproduct on $\oB$ compatible with the grading and the differential. For the counit, we have $\varepsilon(xb)=\varepsilon(x)\varepsilon(b) =\varepsilon(b)$. Finally, the right $A$-action also descends to $\oB$, as $(xb) \cdot y = xby = x(by) \sim by=b \cdot y$.
\end{proof}

\begin{rem}\label{R:LoseProduct}
Observe that we lose the product in the quotient $\oB$. Indeed, for all $x,y \in X$ we have $y \sim 1$, but $e_x \cdot y = e_xy=ye_{x^y} \sim e_{x^y} \nsim e_x=e_x \cdot 1$.
\end{rem}

\begin{lem}\label{Bvscomplejo}
As a left $A$-module, $B$ can be presented as
\[B\cong A\ot \ZZ\langle X\rangle.\] 
\end{lem}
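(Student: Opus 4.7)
The aim is to exhibit a left $A$-module isomorphism between $B$ and $A\ot \ZZ\langle X\rangle$, where $\ZZ\langle X\rangle$ is the tensor algebra $T(\ZZ X)$ with the usual monomial basis $\{e_{y_1}\cdots e_{y_n}\}_{n\geq 0}$. My strategy is to produce the natural surjection in one direction and invert it by endowing the target with a twisted product.

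The natural map $\phi\colon A\ot \ZZ\langle X\rangle\to B$ sending $a\ot e_{y_1}\cdots e_{y_n}$ to $a\cdot e_{y_1}\cdots e_{y_n}$ is clearly well-defined and $A$-linear on the left. To establish surjectivity, I would iteratively apply the relation $e_xy=ye_{x^y}$ (and, in the subalgebra generated by $X$, the relation $xy=yx^y$) to reduce any word in $B$ to a sum of normal forms in which every $X$-generator precedes every $e$-generator. A straightforward induction on word length finishes this step.

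For injectivity I plan to construct an inverse $\psi\colon B\to A\ot\ZZ\langle X\rangle$ by placing a ring structure on $\wt{B}:=A\ot\ZZ\langle X\rangle$ and appealing to the universal property of $B$. The right $M(X)$-action on $X$ extends factorwise by algebra automorphisms to $T(\ZZ X)$, written $w\mapsto w\bt a$, and I would define the product by
\[(a\ot w)(a'\ot w'):= aa'\ot (w\bt a')\,w',\]
which is forced by the single commutation rule $e_x\cdot a=a\cdot e_{x\bt a}$ that should hold under $\phi$. The map $\psi$ is then defined on the free algebra by $x\mapsto x\ot 1$ and $e_y\mapsto 1\ot e_y$, and a one-line check shows it annihilates both defining relations of $B$; for instance $\psi(e_xy)=(1\ot e_x)(y\ot 1)=y\ot e_{x\bt y}=\psi(ye_{x^y})$. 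Checking $\phi\circ\psi=\Id_B$ and $\psi\circ\phi=\Id_{\wt{B}}$ then reduces to an evaluation on generators and on elementary tensors, respectively.

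The main technical obstacle is the associativity of the twisted product on $\wt{B}$. This amounts to the two compatibilities $w\bt(a'a'')=(w\bt a')\bt a''$ and $(w_1w_2)\bt a=(w_1\bt a)(w_2\bt a)$: the second holds by construction of the factorwise extension, while the first encodes precisely the fact that $M(X)$ acts on $X$, which in turn rests on the self-distributivity axiom~\eqref{E:SD}. Once this associativity is secured, the remaining verifications are routine checks on generators.
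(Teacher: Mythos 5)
Your argument is correct, but it reaches the inverse map by a genuinely different route than the paper. The paper constructs the inverse combinatorially: for a monomial $b$ of $B$ it erases the degree-zero generators one by one from the left, twisting the $e_y$'s to their left by each erased letter, and then checks by inspecting the defining relations of $B$ (together with self-distributivity) that the resulting recipe $b\mapsto a(b)\ot t(b)$ is well defined. You instead make $\wt{B}=A\ot\ZZ\langle X\rangle$ into a unital associative ring via the smash-product formula $(a\ot w)(a'\ot w')=aa'\ot(w\bt a')w'$ and obtain the inverse from the universal property of $B$ as a quotient of the free algebra; well-definedness then comes for free once the two defining relations are seen to be killed, and the only substantive work is the associativity of the twisted product, which you correctly reduce to the axiom for the right $M(X)$-action on $X$ and hence to the same self-distributivity identity the paper invokes. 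Your route trades the paper's monomial-by-monomial well-definedness check (which the paper only sketches) for a structural one, and as a by-product identifies the whole algebra structure of $B$ with the smash product $A\# T(\ZZ X)$ --- more than the lemma, a statement about left $A$-modules only, actually requires. Two minor points: since $X$ is only assumed to be a shelf here, the maps $-\t y$ need not be bijective, so your factorwise extension is by algebra endomorphisms rather than automorphisms (this changes nothing in the argument); and once you have $\psi\circ\phi=\Id_{\wt{B}}$ together with surjectivity of $\phi$, the identity $\phi\circ\psi=\Id_B$ is automatic, so that last verification can be dropped.
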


\begin{proof}
Consider the map 
\begin{align*}
A\ot \ZZ\langle X\rangle &\to B,\\
x_1\cdots x_k \ot y_1 \cdots y_n &\mapsto x_1\cdots x_k e_{y_1} \cdots e_{y_n}.
\end{align*}
It is well defined since the relations in~$A$ hold true for the corresponding generators of~$B$.

Going in the opposite direction is trickier. A monomial $b$ in $B$ is a product of generators of the form $x$ and $e_y$. Let $a(b)$ be what remains in $b$ when all generators of the form $e_y$ are erased. Further, start with a new copy of $b$ and erase all generators of the form $x$ one by one, starting from the left; when erasing a generator $x$, replace all generators of the form $e_y$ to its left by $e_{y^x}$. After that replace all the $e_y$ by $y$. This yields a monomial $t(b) \in \ZZ\langle X\rangle$. Analyzing the defining relations of $B$, and using the self-distributivity axiom \eqref{E:SD} for~$X$, one sees that we obtained a well-defined map 
\begin{align*}
B &\to A\ot \ZZ\langle X\rangle,\\
b &\mapsto a(b)\ot t(b).
\end{align*}

Both maps are clearly $A$-equivariant, and are mutually inverse. 
\end{proof}

From this follows
\begin{prop}\label{Bvscomplejo2}
One has the following isomorphisms of complexes:
\begin{align*}
& (C_\bullet^M,\partial)\cong B, \qquad (C^\bullet_M,\partial^*)\cong B^*,\qquad (C_\bullet,\partial)\cong \oB, \qquad (C^\bullet,\partial^*)\cong \oB^*,\\
& (C_\bullet(X,k M(X)),\partial)\cong B(X,k), \qquad (C^\bullet(X,k M(X)),\partial^*)\cong B^*(X,k),\\
& (C_\bullet(X,k),\partial)\cong \oB(X,k), \qquad (C^\bullet(X,k),\partial^*)\cong \oB^*(X,k).
\end{align*}
Here $B^*$ denotes the graded dual of $B$ with the induced differential, and similarly for $\oB^*$; in the last isomorphisms, the ring $k$ is considered as a trivial $X$-module.
\end{prop}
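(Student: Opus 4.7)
The plan is to reduce everything to the first isomorphism $(C_\bullet^M,\partial)\cong B$, and then obtain the other seven by standard manipulations: dualizing, quotienting by the left $A$-action, and extending scalars to $k$.

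First, Lemma~\ref{Bvscomplejo} provides a left $A$-linear isomorphism $B \cong A \otimes \mathbb{Z}\langle X\rangle$ sending $x_1 \cdots x_k \otimes y_1 \cdots y_n$ to $x_1 \cdots x_k\, e_{y_1} \cdots e_{y_n}$. Since $|e_y|=1$ and $|x|=0$, the degree-$n$ part of the right-hand side is $A \otimes \mathbb{Z}X^n$, which is exactly $C_n(X,M(X)) = C_n^M(X)$ by definition. This identifies the underlying graded $A$-modules.

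Second, I would check that $d$ on $B$ corresponds to $\partial$ under this identification. Since $d$ vanishes on each $x \in A$ and is a graded derivation of degree $-1$, applied to $m\, e_{y_1} \cdots e_{y_n}$ with $m \in A$ it yields
\[
\sum_{i=1}^{n} (-1)^{i-1}\, m\, e_{y_1} \cdots e_{y_{i-1}} (1 - y_i)\, e_{y_{i+1}} \cdots e_{y_n}.
\]
Using the defining relation $e_x y = y e_{x^y}$ iteratively to pull each $y_i$ past $e_{y_{i-1}}, \ldots, e_{y_1}$, this becomes
\[
\sum_{i=1}^{n} (-1)^{i-1} \bigl( m\, e_{y_1} \cdots \widehat{e_{y_i}} \cdots e_{y_n} \;-\; m y_i\, e_{y_1^{y_i}} \cdots e_{y_{i-1}^{y_i}}\, e_{y_{i+1}} \cdots e_{y_n} \bigr),
\]
which matches $\partial(m\,y_1 \cdots y_n)$ from~\eqref{bord-racks-coeffs} with coefficient $r=m$ (recalling that the right $A$-action on itself is by multiplication, so $r^{x_i}=mx_i$).

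Third, the dual statement $(C^\bullet_M,\partial^*) \cong B^*$ is immediate by taking graded duals. For $(C_\bullet,\partial)\cong\oB$, Lemma~\ref{L:Bquotient} together with the isomorphism~\eqref{E:BBar} reduces $\oB$ to $\mathbb{Z} \otimes_A (A\otimes \mathbb{Z}\langle X\rangle) \cong \mathbb{Z}\langle X\rangle$, whose degree-$n$ part is $\mathbb{Z}X^n = C_n(X)$. In this quotient the coefficient $my_i$ in the boundary formula above collapses to $1$, yielding exactly~\eqref{bord-racks}. Dualizing gives $(C^\bullet,\partial^*) \cong \oB^*$. The four $k$-linear statements are proved identically, since $B$, Lemma~\ref{Bvscomplejo}, and Lemma~\ref{L:Bquotient} all extend verbatim to an arbitrary unital commutative ring $k$ (and the coefficient choice $R = k\,M(X)$ gives back the structure-monoid-coefficient complex).

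The only real work is the bookkeeping in the second step: tracking the iterated substitutions $y_j \mapsto y_j^{y_i}$ as one pushes $y_i$ leftwards through the $e$-generators. There is no conceptual obstacle here, since the consistency of these substitutions is guaranteed by the self-distributivity axiom~\eqref{E:SD}, which was already used when defining $B$.
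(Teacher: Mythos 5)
Your proposal is correct and follows essentially the same route as the paper's proof: identify the underlying graded groups via Lemma~\ref{Bvscomplejo}, compute the induced differential using the derivation property of $d$ together with the relation $e_xy=ye_{x^y}$ to match \eqref{bord-racks-coeffs}, and then pass to the quotient $\oB$ for \eqref{bord-racks} and to graded duals and $k$-coefficients for the remaining cases. The paper leaves the dualization and $k$-extension implicit, but otherwise the two arguments coincide.
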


\begin{proof}
The preceding lemma yields isomorphisms of abelian groups
\[B \cong C_\bullet^M, \qquad \oB \cong \ZZ\langle X\rangle = C_\bullet,\]
and their $k$-versions.

To compute the differential induced on this by~$d$, we use that $d$ is a derivation:
\[
d(e_{x_1}\cdots e_{x_n})
=\sum_{i=1}^ n(-1)^ {i-1} e_{x_1}\cdots e_{x_{i-1}}d(e_{x_i})e_{x_{i+1}}\cdots e_{x_n}\]
\[
=\sum_{i=1}^ n(-1)^ {i-1}e_{x_1}\cdots e_{x_{i-1}}(1-x_i)e_{x_{i+1}}\cdots e_{x_n}\]
\[
=\sum_{i=1}^ n(-1)^ {i-1} e_{x_1}\cdots e_{x_{i-1}}e_{x_{i+1}}\cdots e_{x_n}
-
\sum_{i=1}^ n(-1)^ {i-1}e_{x_1}\cdots e_{x_{i-1}}x_ie_{x_{i+1}}\cdots e_{x_n}.\]
Using the relation $e_xy=ye_{x^y}$, one gets
\[
e_{x_1}\cdots e_{x_{i-1}}x_ie_{x_{i+1}}\cdots e_{x_n}
= x_i e_{x_1^{x_i}}\cdots e_{x_{i-1}^{x_i}}e_{x_{i+1}}\cdots e_{x_n},\]
which in $C_\bullet^M$ corresponds to $x_i x_1^{x_i}\cdots x_{i-1}^{x_i} x_{i+1} \cdots x_n$. We thus recover the differential~\eqref{bord-racks-coeffs}. In the quotient $\oB$, the last computation simplifies as $e_{x_1^{x_i}}\cdots e_{x_{i-1}^{x_i}}e_{x_{i+1}}\cdots e_{x_n}$, and we recover the differential~\eqref{bord-racks}.
\end{proof}

\begin{rem}\label{rem2}
This proposition provides a very simple proof that $\partial^2=0$ in $C_\bullet(X,M(X))$ and its versions.
\end{rem}

From the proof of Lemma~\ref{Bvscomplejo} one deduces the following useful fact:
\begin{lem}\label{L:AinB}
The map $A \to B$, $x \mapsto x$ is an injective algebra morphism.
\end{lem}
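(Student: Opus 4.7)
The plan is to derive this directly from Lemma~\ref{Bvscomplejo}. First I would check that the map is a well-defined algebra morphism: since the defining relations $yx^y=xy$ of $A$ are part of the defining relations of $B$, the assignment $x\mapsto x$ on generators extends to an algebra morphism $A\to B$.

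For injectivity, I would invoke the isomorphism of left $A$-modules $\varphi\colon A\ot\ZZ\langle X\rangle \xrightarrow{\sim} B$ constructed in Lemma~\ref{Bvscomplejo}, which sends $x_1\cdots x_k\ot y_1\cdots y_n$ to $x_1\cdots x_k\,e_{y_1}\cdots e_{y_n}$. Under its inverse $b\mapsto a(b)\ot t(b)$, an element lying in $A\subseteq B$ (a word in the generators $x\in X$ with no $e_y$'s) is sent to $a\ot 1$, where $1$ denotes the empty word in $\ZZ\langle X\rangle$. The composition
\[
A \longrightarrow B \xrightarrow{\varphi^{-1}} A\ot\ZZ\langle X\rangle, \qquad a \mapsto a\ot 1,
\]
therefore factors through $A\ot 1\subseteq A\ot\ZZ\langle X\rangle$, and this latter map $a\mapsto a\ot 1$ is obviously injective (a left inverse is supplied by $\mathrm{Id}_A\ot \eta$, where $\eta\colon\ZZ\langle X\rangle\to\ZZ$ picks out the coefficient of the empty word). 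Since $\varphi$ is an isomorphism, it follows that $A\to B$ is injective.

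The only point that needs attention is to confirm that the inverse $\varphi^{-1}$ sends $A\subseteq B$ into $A\ot 1$ and not somewhere else; but this is immediate from the explicit recipe in the proof of Lemma~\ref{Bvscomplejo}, because the process that produces $t(b)$ records precisely the generators of the form $e_y$ occurring in $b$, and there are none when $b$ is a monomial in the $x\in X$. I do not anticipate any real obstacle here: once Lemma~\ref{Bvscomplejo} is in hand, the statement is essentially the observation that the degree $0$ part of $B$ (with respect to the grading $|x|=0$, $|e_y|=1$) is exactly $A$.
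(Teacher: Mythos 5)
Your proof is correct and follows essentially the same route as the paper, which likewise deduces the lemma directly from the explicit $A$-module isomorphism $B\cong A\ot \ZZ\langle X\rangle$ constructed in the proof of Lemma~\ref{Bvscomplejo}. Your added detail — that the inverse map sends a monomial in the $x\in X$ to $a\ot 1$, whence injectivity — is exactly the intended observation.
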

In what follows we will often identify $A$ with its isomorphic image in~$B$.

The isomorphisms in Proposition~\ref{Bvscomplejo2} allow one to transport the structure from $B$ and $\oB$ to rack (co)homology:
\begin{thm}\label{thm:main}
Take a shelf $X$ and a field $k$. Then
\begin{enumerate}
\item the chain complex $(C_\bullet(X),\partial)$ carries a coassociative coproduct;
\item the cochain complex $(C^\bullet(X),\partial^*)$ carries an associative product;
\item the chain complex $(C_\bullet(X,M(X)),\partial)$ carries a bialgebra structure;
\item the cochain complex $(C^\bullet(X,M(X)),\partial^*)$ carries an associative product, enriched to a bialgebra structure when $X$ is finite.
\end{enumerate}
This induces
\begin{enumerate}
\item associative products on $H^\ast(X)$, $H^\ast(X,M(X))$, $H^\ast(X,k)$, and $H(X,M(X))$;
\item a coassociative coproduct on $H(X,k)$;
\item a bialgebra structure on $H(X,kM(X))$;
\item an associative product on $H^\ast(X,kM(X))$, which is completed to a bialgebra structure for finite $X$.
\end{enumerate}
\end{thm}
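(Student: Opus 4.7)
The plan is to \emph{transport} the (co)algebra structures that $B(X)$ and $\oB(X)$ already carry onto the relevant (co)chain complexes via Proposition~\ref{Bvscomplejo2}, and then to descend to (co)homology using Propositions~\ref{prop:dgb_hom} and~\ref{prop:dgb_hom_bialg}. Assertion (3) on $(C_\bullet(X,M(X)),\partial)$ is just Theorem~\ref{dgb} read through the isomorphism $B(X)\cong C_\bullet(X,M(X))$, and assertion (1) similarly restates the d.g. coalgebra structure of $\oB(X)$ (Lemma~\ref{L:Bquotient}) via $\oB(X)\cong C_\bullet(X)$. For the cochain assertions (2) and (4), I would note that the transpose of a coproduct $\Delta\colon C\to C\otimes C$ always defines an associative product $(f\smile g)(c)=(f\otimes g)(\Delta c)$ on the graded dual $C^*$ with no finite-dimensionality hypothesis; applying this to $\oB(X)$ and to $B(X)$ produces the associative products on $C^\bullet(X)\cong\oB(X)^*$ and $C^\bullet(X,M(X))\cong B(X)^*$. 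The bialgebra refinement in (4) requires dualizing the \emph{product} of $B(X)$ so that its transpose lands inside $B(X)^*\otimes B(X)^*$; this is where finite-dimensionality in each degree enters, and for finite $X$ over a field $k$ it can be arranged.

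The passage to (co)homology is then essentially automatic, since all the structure maps commute up to sign with the differential. Proposition~\ref{prop:dgb_hom} immediately yields the associative products on $H^*(X)$, $H^*(X,M(X))$, $H^*(X,k)$, and $H(X,M(X))$. Over a field $k$, Proposition~\ref{prop:dgb_hom_bialg} adds the coproduct on the homology side, giving the coassociative coproduct on $H(X,k)$ and the bialgebra structure on $H(X,kM(X))$; for finite $X$, the same proposition produces the bialgebra structure on $H^*(X,kM(X))$ from the cochain-level bialgebra of (4).

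The main obstacle is the dualization issue underlying assertion (4): by Lemma~\ref{Bvscomplejo}, $B(X)\cong A(X)\otimes\ZZ\langle X\rangle$ as a left $A$-module, and the factor $A(X)$ is usually infinite-dimensional even for finite $X$, so the multiplication on $B(X,k)$ does not automatically transpose to a coproduct on the full graded dual. One must therefore be careful in identifying the target of the transposed product; in practice this is done by invoking the second statement of the lemma preceding Proposition~\ref{prop:dgb_hom_bialg} in a suitable finite-dimensional setting, or by restricting to the sub-dual on which the transpose lands in $B(X,k)^*\otimes B(X,k)^*$. Modulo this point, every remaining verification reduces to routine sign and compatibility checks already covered by Theorem~\ref{dgb} and the preceding propositions.
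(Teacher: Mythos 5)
Your proposal follows exactly the route the paper intends: Theorem~\ref{thm:main} is stated without a separate proof precisely because it is the transport, via the isomorphisms of Proposition~\ref{Bvscomplejo2}, of the structures already established on $B$ and $\oB$ in Theorem~\ref{dgb}, Lemma~\ref{L:Bquotient}, and Propositions~\ref{prop:dgb_hom} and~\ref{prop:dgb_hom_bialg}. Your caveat about assertion (4) --- that the degree-$n$ component $A(X)\otimes kX^n$ of $B(X,k)$ is infinite-dimensional even for finite $X$, so the product does not transpose to the full graded dual for free --- is a legitimate subtlety that the paper itself glosses over, and resolving it as you suggest (working degreewise in a suitable finite-dimensional or restricted-dual setting before invoking the lemma preceding Proposition~\ref{prop:dgb_hom_bialg}) is consistent with what the authors implicitly do.
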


The product in cohomology is called the \emph{cup product}, denoted by~$\smile$.

\begin{example}
Take $f,g\in C^2(X)$. To compute $f\smile g$, one needs to compute
the summands in $\Delta(e_xe_ye_ze_t)$ with two tensors of type $e_{u}$ in each factor. We use the computation from Example~\ref{exDelta}:
\[
\Delta(e_xe_ye_ze_t)=
\Delta(e_xe_y)\Delta(e_ze_t)=\]
\[=
(e_xe_y\ot xy+1\ot e_xe_y+
e_x\ot xe_y
-e_y\ot ye_{x^y})
(e_ze_t\ot zt+1\ot e_ze_t+
e_z\ot ze_t
-e_t\ot te_{z^t})
\]
\[
=e_xe_y\ot xye_ze_t+e_ze_t\ot e_xe_yzt
-e_x e_z\ot   xe_yze_t
+e_xe_t   \ot xe_yte_{z^t}
+e_y e_z\ot ye_{x^y}ze_t
-e_y  e_t\ot ye_{x^y}te_{z^t}+\cdots
\]
where the dots hide terms on which  $f$ and $g$ vanish. Pushing the $e_{u}$'s to the right and the elements of $X$ to the left, we get
\[
e_xe_y\ot xye_ze_t+e_ze_t\ot zx e_{x^{zt}} e_{y^{zt}}
-e_x e_z\ot   xze_{y^z}e_t\]
\[+e_xe_t   \ot xte_{y^t}e_{z^t}
+e_y e_z\ot yze_{x^{yz}}e_t
-e_y  e_t\ot yte_{x^{yt}}e_{z^t}+\cdots
\]
so finally 
$(f\smile g)(e_xe_ye_ze_t)$
is equal to
\[
f(e_xe_y)g(e_ze_t)+f(e_ze_t)g( e_{x^{zt}}e_{y^{zt}})
-f(e_x e_z)g(e_{y^z}e_t)\]
\[
+f(e_xe_t)g(e_{y^t}e_{z^t})
+f(e_y e_z)g(e_{x^{yz}}e_t)
-f(e_y  e_t)g(e_{x^{yt}}e_{z^t}).
\]
This formula is to be compared with Equation (23) of \cite{Cl}. A full explanation of this agreement is given in the next section.
\end{example}

The last piece of structure to be extracted from Proposition~\ref{Bvscomplejo2} is the $A$-action:
\begin{prop}\label{prop:ActionOnRackCohom}
For a shelf $X$, the complex $(C^\bullet(X),\partial^*)$ is a left $A(X)$-module, with
\[(x\cdot f )( x_1 \cdots x_n ) := (x_1^x \cdots x_n^x),\]
where $f \in C^n(X)$, and $x,x_1, \ldots,x_n \in X$. The induced $A(X)$-action in cohomology is trivial.
\end{prop}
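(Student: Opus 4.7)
\smallskip

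\textbf{Proof plan.} The idea is to transport the right $A(X)$-action on $\oB(X)$ provided by Lemma~\ref{L:Bquotient} through the isomorphism $(C^\bullet(X),\partial^*)\cong \oB(X)^*$ of Proposition~\ref{Bvscomplejo2}. The graded dual of a right module is a left module, so this gives $C^\bullet(X)$ the structure of a left $A(X)$-module, with the differential still compatible because the right action on $\oB(X)$ commutes with~$d$ (Lemma~\ref{L:Bquotient}).

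Next I would identify this dual action with the explicit formula in the statement. Under the isomorphism $\oB_n\cong \ZZ X^n$, a basis element $x_1\cdots x_n$ corresponds to $e_{x_1}\cdots e_{x_n}$. Acting on the right by $x$ and pushing $x$ through each $e_{x_i}$ using the defining relation $e_yx=xe_{y^x}$ yields
\[
(e_{x_1}\cdots e_{x_n})\cdot x = e_{x_1}\cdots e_{x_n}\,x = x\, e_{x_1^x}\cdots e_{x_n^x}\sim e_{x_1^x}\cdots e_{x_n^x}
\]
in $\oB$, the last step using that left multiplication by $x$ is killed in the quotient defining $\oB$. Dualising, $(x\cdot f)(x_1\cdots x_n)=f\bigl((x_1\cdots x_n)\cdot x\bigr)=f(x_1^x\cdots x_n^x)$, which is the announced formula.

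For the triviality of the induced $A(X)$-action on $H_{\Rack}(X)$, the plan is to mimic the argument of Proposition~\ref{prop:dgb_hom}, but applied to the right action on $\oB$. Concretely, for $b\in B$ homogeneous, the derivation property of $d$ gives
\[
d(b\,e_x) = d(b)\,e_x + (-1)^{|b|}b(1-x).
\]
If $[b]$ is a cycle in $\oB$, then $d(b)$ lies in the left ideal $\langle yc-c\,:\,y\in X,\ c\in B\rangle$ defining the quotient, so $d(b)\,e_x$ also lies in this ideal and vanishes in $\oB$. Thus $[b]-[b]\cdot x=(-1)^{|b|}d([b\,e_x])$ is a boundary in $\oB$, proving that the right $A$-action on the homology of $\oB$ is trivial. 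Dualising back gives triviality of the left $A$-action on $H_{\Rack}(X)$.

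The only delicate point is the bookkeeping in the quotient $\oB$: one must verify that the homotopy element $be_x$ still makes sense there (it does, as $e_x$ is a generator of $\oB$) and that the obstruction term $d(b)\,e_x$ really does vanish modulo the defining ideal. Both are immediate from the one-sided nature of the quotient $\oB=\ZZ\otimes_A B$. Everything else is a routine transcription through the dualities of Proposition~\ref{Bvscomplejo2}.
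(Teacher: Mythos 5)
Your proposal is correct and follows essentially the same route as the paper, whose proof simply cites Proposition~\ref{prop:dgb_hom} (the homotopy $d(b\,e_x)=d(b)e_x+(-1)^{|b|}b(1-x)$ showing the action is trivial up to boundary) together with the transport of structure via Proposition~\ref{Bvscomplejo2}; you merely unfold these citations, correctly noting that the kernel of $B\to\oB$ is stable under \emph{right} multiplication (it is a right, not left, ideal, despite your wording) so that $d(b)e_x$ vanishes in $\oB$.
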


\begin{proof}
This  directly follows from Proposition~\ref{prop:dgb_hom} and~\ref{Bvscomplejo2}.
\end{proof}

This property of rack cohomology was first noticed by J.~Przytycki and K.~Putyra \cite{PrzPuLattices}. In our bialgebraic interpretation it becomes particularly natural.

%%%%%
\section{An explicit expression for the cup product in cohomology}
%%%%%

To give an explicit formula for the cup product in rack cohomology, we need to compute $\Delta(e_{x_1}\cdots e_{x_n})$ for any $x_1,\ldots,x_n$ in the rack $X$, generalizing Example~\ref{exDelta}. For this we will introduce several notations. First, for any $n\ge 1$ and for any $i\in\{1,\ldots, n\}$ we define two maps $\delta_i^0,\delta_i^1 \colon X^n \to X^{n-1}$ by
\begin{eqnarray*}
\delta_i^0(x_1,\ldots,x_n)&=&(x_1,\ldots,x_{i-1},x_{i+1}, \ldots ,x_n),\\
\delta_i^1(x_1,\ldots,x_n)&=&(x_1\t x_i,\ldots,x_{i-1}\t x_i,x_{i+1},\ldots,x_n).
\end{eqnarray*}
The above identification of $B$ with $A\otimes \ZZ\langle X\rangle$ given by $ae_{x_1}\cdots e_{x_n}\leftrightarrow a\ot x_1\cdots x_n$ allows one to transport $\delta_i^0$ and $\delta_i^1$ to $A$-linear endomorphisms of $B$:
\begin{eqnarray*}
\delta_i^0(ae_{x_1}\cdots e_{x_n})&=&ae_{x_1}\cdots e_{x_{i-1}}e_{x_{i+1}}\cdots e_{x_n}\hbox{ if }i\le n,\\
	&=& 0\hbox{ if }i>n,\\
\delta_i^1(ae_{x_1}\cdots e_{x_n})&=&ax_ie_{x_1\t x_i}\dots e_{x_{i-1}\t x_i}e_{x_{i+1}}\cdots e_{x_n}\hbox{ if }i\le n,\\
	&=& 0\hbox{ if }i>n.
\end{eqnarray*}
A straightforward computation using self-distributivity yields
\begin{equation}\label{cube-sets}
\delta_i^\varepsilon\delta_{j}^\eta=\delta_{j-1}^\eta\delta_i^\varepsilon
\end{equation}
for any $i<j$ and any $\varepsilon,\eta\in\{0,1\}$. Identities \eqref{cube-sets} are the defining axioms for $\square$-sets (\cite{SerreThesis}, see also \cite{FRS95,Cl}). Now, the boundary \eqref{bord-racks} can be rewritten as
\begin{equation}
\partial=\sum_{i\ge 1} (-1)^{i-1}(\delta_i^0-\delta_i^1).
\end{equation}
For any finite subset $S$ of $\mathbb N$ %$\mathbb N_{>0}=\{1,2,3\ldots\}$
 and for $\varepsilon\in\{0,1\}$, we denote by $\delta_S^\varepsilon$ the composition, in the increasing order, of the maps $\delta_a^\varepsilon$ for $a\in S$.
 
\begin{prop}\label{coprod-explicit}
Given a rack~$X$, the coproduct in~$B(X)$ can be computed by the formula
\begin{equation}\label{coprod-formula}
\Delta(ae_{x_1}\cdots e_{x_n})=\sum_{S\subset\{1,\ldots,n\}}\epsilon(S)a\delta_S^0(e_{x_1}\cdots e_{x_n})\ot  a\delta_{S^c}^1(e_{x_1}\cdots e_{x_n})
\end{equation}
for all $a \in \langle X\rangle, x_i \in X$. Here $S^c=\{1,\ldots,n\} \setminus S$, 
%$A^c$ is the complement of $A$ in $\{1,\ldots,n\}$,
 and $\epsilon(S)$ is the signature of the unshuffle permutation of $\{1,\ldots,n\}$ which puts  $S^c$ on the left and $S$ on the right. %\footnote{The signature $\epsilon(A)$ differsfrom that given in \cite[Paragraph 3.1]{Cl} in order to incorporate the extra sign $(-1)^{km}$ therein.}.
\end{prop}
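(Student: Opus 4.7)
The plan is to proceed by induction on $n$, after first reducing to the case $a = 1$. Since every generator $x \in X$ is group-like in $B(X)$, one has $\Delta(a) = a \otimes a$ for any word $a$ in the generators, and by multiplicativity of $\Delta$ (Theorem~\ref{dgb}) it suffices to establish the formula for $a = 1$ and then multiply the resulting identity by $a \otimes a$ on the left. The base case $n = 1$ is an immediate check against the definition $\Delta(e_{x_1}) = e_{x_1} \otimes x_1 + 1 \otimes e_{x_1}$.

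For the inductive step, I would use multiplicativity once more to write
\[\Delta(e_{x_1}\cdots e_{x_{n+1}}) = \Delta(e_{x_1}\cdots e_{x_n})\cdot\Delta(e_{x_{n+1}}),\]
plug in the inductive hypothesis, and expand against the two summands of $\Delta(e_{x_{n+1}})$. Each resulting term is labelled by a pair $(S, \eta)$ with $S \subseteq \{1,\ldots,n\}$ and $\eta \in \{0, 1\}$ recording whether one picks $1 \otimes e_{x_{n+1}}$ (for $\eta = 0$) or $e_{x_{n+1}} \otimes x_{n+1}$ (for $\eta = 1$). I would then set up the bijection $(S, 0) \leftrightarrow T = S \cup \{n+1\}$ and $(S, 1) \leftrightarrow T = S$ with subsets $T \subseteq \{1,\ldots,n+1\}$, and check that the two sides agree term by term.

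The case $\eta = 0$ is essentially automatic: $\delta_T^0$ kills $e_{x_{n+1}}$ first and then acts as $\delta_S^0$, while $\delta_{T^c}^1 = \delta_{S^c}^1$ does not see $e_{x_{n+1}}$, so the appended factor passes through unchanged; the unshuffle sign satisfies $\epsilon(T) = \epsilon(S)$, since in both cases $n+1$ lands at the end of the permuted list. The case $\eta = 1$ rests on the key identity
\[\delta_{n+1}^1(e_{x_1}\cdots e_{x_{n+1}}) = e_{x_1}\cdots e_{x_n}\cdot x_{n+1},\]
obtained by iteratively pushing $x_{n+1}$ rightward through all the preceding $e$'s via the defining relation $y\,e_{x^y} = e_x\, y$ of $B(X)$. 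Combined with the $A$-linearity of each $\delta_i^1$, this lets me rewrite $\delta_{T^c}^1(e_{x_1}\cdots e_{x_{n+1}})$, for $T^c = S^c \cup \{n+1\}$, as $\delta_{S^c}^1(e_{x_1}\cdots e_{x_n})\cdot x_{n+1}$, matching the right tensor factor that the inductive step produces.

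The principal obstacle I expect is the sign bookkeeping in the $\eta = 1$ case. The Koszul rule for multiplication in $B \otimes B$ produces an extra factor $(-1)^{|\delta_{S^c}^1(e_{x_1}\cdots e_{x_n})|} = (-1)^{|S|}$ when moving $e_{x_{n+1}}$ past the second tensor factor, so one needs $\epsilon(T) = (-1)^{|S|}\epsilon(S)$ when $T = S$. This should follow from the observation that the unshuffle defining $\epsilon(T)$ orders $\{1,\ldots,n+1\}$ as $(S^c, n+1, S)$, whereas the one defining $\epsilon(S)$ orders $\{1,\ldots,n\}$ as $(S^c, S)$; the two differ precisely by sliding $n+1$ past the $|S|$ elements of $S$, contributing exactly the required factor $(-1)^{|S|}$.
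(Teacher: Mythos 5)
Your proposal is correct and follows essentially the same route as the paper's proof: reduce to $a=1$ via group-likeness, induct on $n$ by multiplying the inductive hypothesis by $\Delta(e_{x_n})=e_{x_n}\ot x_n+1\ot e_{x_n}$, and re-index the two resulting sums by whether or not $n$ belongs to $S$, with the Koszul sign $(-1)^{|S|}$ absorbed into $\epsilon(S)$. You merely make explicit two points the paper leaves implicit, namely the identity $\delta_{n}^1(e_{x_1}\cdots e_{x_{n}})=e_{x_1}\cdots e_{x_{n-1}}x_{n}$ coming from the relation $e_xy=ye_{x^y}$, and the sign comparison between the unshuffles of $\{1,\ldots,n\}$ and $\{1,\ldots,n+1\}$.
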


We used the canonical form $ae_{x_1}\cdots e_{x_n}$ of a monomial in $B(X)$.

\begin{proof}
Since $\Delta(a)=a\ot a$, we can omit this part of our monomial. Let us then proceed by induction on $n$, the case $n=1$ being immediate.
\begin{align*}
\Delta(e_{x_1}&\cdots e_{x_n})=\Delta(e_{x_1}\cdots e_{x_{n-1}})\Delta(e_{x_n})\\
%&=\Delta(e_{x_1}\cdots e_{x_{n-1}})(e_{x_n}\otimes x_n+1\otimes e_{x_n})\\
&=\left(\sum_{B\subset\{1,\ldots,n-1\}}\epsilon(B)\delta_B^0(e_{x_1}\cdots e_{x_{n-1}})\otimes \delta_{B^c}^1(e_{x_1}\cdots e_{x_{n-1}})
\right)(e_{x_n}\otimes x_n+1\otimes e_{x_n})\\
&=\sum_{B\subset\{1,\ldots,n-1\}}(-1)^{\vert B\vert}\epsilon(B)\delta_B^0(e_{x_1}\cdots e_{x_{n-1}})e_{x_n}\otimes \delta_{B^c}^1(e_{x_1}\cdots e_{x_{n-1}})x_n\\
&\qquad+\sum_{B\subset\{1,\ldots,n-1\}}\epsilon(B)\delta_B^0(e_{x_1}\cdots e_{x_{n-1}})\otimes \delta_{B^c}^1(e_{x_1}\cdots e_{x_{n-1}})e_{x_n}\\
&=\sum_{S\subset\{1,\ldots,n\},\ n \notin S}\epsilon(S)\delta_S^0(e_{x_1}\cdots e_{x_n})\otimes \delta_{S^c}^1(e_{x_1}\cdots e_{x_n})\\
&\qquad+\sum_{S\subset\{1,\ldots,n\},\ n \in S}\epsilon(S)\delta_S^0(e_{x_1}\cdots e_{x_n})\otimes \delta_{S^c}^1(e_{x_1}\cdots e_{x_n})\\
&=\sum_{S\subset\{1,\ldots,n\}}\epsilon(S)\delta_S^0(e_{x_1}\cdots e_{x_n})\otimes \delta_{S^c}^1(e_{x_1}\cdots e_{x_n}).\qedhere
\end{align*}
\end{proof}

\begin{cor}\label{clauwens}
The cup product in rack cohomology induced from the coproduct in $B$ coincides with the cup product given by F.J.-B.J. Clauwens in \cite[Equation (32)]{Cl}.
\end{cor}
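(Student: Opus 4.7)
The plan is to unfold the definition of the cup product as the dual of the coproduct on $\oB(X)$, apply the explicit formula from Proposition~\ref{coprod-explicit}, and match the resulting expression term-by-term with Clauwens' formula \cite[Eq.~(32)]{Cl}. Given $f \in C^p(X)$ and $g \in C^q(X)$ with $p+q=n$, the cup product evaluates via
\[
(f \smile g)(x_1\cdots x_n) = (f\otimes g)\bigl(\Delta(e_{x_1}\cdots e_{x_n})\bigr),
\]
where the coproduct is taken in~$\oB$, i.e., modulo the $A$-action. In the sum from Proposition~\ref{coprod-explicit}, only subsets $S\subset\{1,\ldots,n\}$ of cardinality $q$ contribute, by a degree count on the two tensor factors.

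For such $S = \{j_1<\cdots<j_q\}$ with complement $S^c=\{i_1<\cdots<i_p\}$, I would read off $\delta_S^0(e_{x_1}\cdots e_{x_n}) = e_{x_{i_1}}\cdots e_{x_{i_p}}$ directly, so that $f$ evaluates on the subword of $x_1\cdots x_n$ indexed by $S^c$. For $\delta_{S^c}^1$, I would use the defining relation $y\,e_{x^y} = e_x\,y$ together with self-distributivity in an inductive argument to push the deleted generators $x_{i_k}$ to the left, where they are killed in the passage to $\oB$, while producing the iterated conjugation $x_{j_l}^{w_{j_l}}$ with $w_{j_l}$ the product of the $x_{i_k}$ over all $i_k\in S^c$ satisfying $i_k > j_l$. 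This recovers precisely the conjugation pattern of Clauwens, and the sign $\epsilon(S)$ produced by Proposition~\ref{coprod-explicit} coincides with the unshuffle signature in \cite{Cl}.

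The main obstacle is the combinatorial bookkeeping: tracking which $x_{i_k}$ conjugates which $x_{j_l}$ and in what order, through the iterated composition defining $\delta_{S^c}^1$ (re-indexed as needed by the cubical identity~\eqref{cube-sets}), while keeping the Koszul signs straight. The worked example of $(f\smile g)(e_xe_ye_ze_t)$ immediately preceding the corollary already exhibits the pattern for $p=q=2$; the proof will consist in systematizing that computation for arbitrary~$(p,q)$ by an induction on $|S^c|$, with the inductive step supplied by one application of the generator-pushing relation.
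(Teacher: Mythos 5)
Your proposal is correct and follows essentially the same route as the paper: the paper's proof is simply a direct comparison of the explicit coproduct formula~\eqref{coprod-formula} from Proposition~\ref{coprod-explicit} with Clauwens' Equation~(32), and your unfolding of $\delta_S^0$, $\delta_{S^c}^1$ and the conjugation pattern is just that comparison written out in detail. The one point you leave as ``bookkeeping'' that the paper makes explicit is the identification of Clauwens' overall sign $(-1)^{km}$ with the Koszul sign arising when $f\otimes g$ is evaluated on the coproduct.
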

\begin{proof}
This is immediate by comparing~\eqref{coprod-formula} with Equation~(32) defining the cup product in~\cite{Cl}. The overall sign $(-1)^{km}$ in~\cite{Cl} is the Koszul sign.
\end{proof}

%%%%%
\section{The cup product is commutative}\label{S:h}
%%%%%

In the preceding section we established that our cup product on rack cohomology coincides with Clauwens's product. This latter comes from the cohomology of a topological space, and is thus commutative (where, as usual, we mean super-commutativity). We will now give a direct algebraic proof based on an explicit homotopy argument. This homotopy is a specialization of the graphically defined map, constructed for solutions to the Yang--Baxter equation by V.~Lebed \cite{L16}.

\medskip
Let us start with a low-degree illustration:

\begin{example}
Take $f,g\in C^1(X)$, identified with $A$-linear maps from $B$ to $\ZZ$ (also denoted by $f$ and $g$) determined by the values $f(e_x):=f(x)$ and $g(e_x):=g(x)$ for $x\in X$, and vanishing in degrees other than $1$. Then the cup product $f\smile g\in C^2(X)$ is defined by
\[(f\smile g)(e_xe_y)=
(f\ot g)\Delta(e_xe_y)
=(f\ot g)(
e_xe_y\ot xy+1\ot e_xe_y+
e_x\ot xe_y
-e_y\ot ye_{x^y})
\]
(see Example~\ref{exDelta}). Since $f$ and $g$ vanish on elements of degree $0$ and $2$, and are left $A$-linear (where $x$ and $y$ act on $\ZZ$ trivially), the only remaining terms are
\[
-f(e_x)g( xe_y)
+f(e_y)g( ye_{x^y})=
-f(e_x)g( e_y)
+f(e_y)g(e_{x^y}).
\]
Note the Koszul sign $(-1)^{|g||e_x|}=-1$, and similarly in the second term. So the product is in general not commutative. % In fact, it is commutative if and only if the rack is trivial: in such a case, the product agrees with the  shuffle product on the (graded) dual of the tensor algebra.
 On the other hand, the cocycle condition $\partial^*g=0$ means precisely $g(e_x)=g(e_{x^y})$ for all $x$ and $y$, so the cup product restricted to $1$-cocycles is commutative.

Now, take $f,g\in C^1(X,M(X))$, identified with maps $B \to \ZZ$ vanishing in degrees other than $1$. Then, for for monomials $a\in A$ and $x,y \in X$, one computes 
\begin{align}\label{E:cup}
&(f\smile g)(ae_xe_y)=-f(ae_x)g(axe_y)+f(ae_y)g(ae_{x}y),\nonumber\\
&(f\smile g+g\smile f)(ae_xe_y) =\nonumber\\
&\qquad -f(ae_x)g(axe_y)+f(ae_y)g(ae_{x}y)-g(ae_x)f(axe_y)+g(ae_y)f(ae_{x}y).
\end{align}
Suppose that $f$ and $g$ are $1$-cocycles. This yields the relation
\begin{align*}
0&=(-d^*f)(ae_xe_y)=f(d(ae_xe_y))=f(d(a)e_xe_y)+f(ad(e_x)e_y)-f(ae_xd(e_y))\\
&=f(a(1-x)e_y)-f(ae_x(1-y))=f(ae_y)-f(axe_y)-f(ae_x)+f(ae_xy),
\end{align*}
and similarly for $g$. Note the Koszul sign in $-d^*f=fd$. Define a map $h\colon B \to \ZZ$ by
\[h(ae_x)=f(ae_x)g(ae_x)\]
for all monomials $a\in A$ and all $x \in X$. Then \eqref{E:cup} becomes
\begin{align*}
&-f(ae_x)g(axe_y)+f(ae_y)(g(ae_x)-g(ae_y)+g(axe_y))\\
&\qquad-g(ae_x)(f(ae_xy)-f(ae_x)+f(ae_y))+g(ae_y)f(ae_{x}y)\\
&=(f(ae_y)-f(ae_x))g(axe_y)-h(ae_y)+h(ae_x)+(g(ae_y)-g(ae_x))f(ae_{x}y)\\
&=(f(axe_y)-f(ae_xy))g(axe_y)-h(ae_y)+h(ae_x)+(g(axe_y)-g(ae_xy))f(ae_{x}y)\\
&=h(axe_y)-h(ae_y)+h(ae_x)-h(ae_{x}y)=(d^*h)(ae_xe_y),
\end{align*}
yielding the relation $f\smile g+g\smile f=d^*h$, and hence the super-commutativity of the cup product of degree $1$ cohomology classes.

\end{example}

\begin{lem}\label{com-one}
Let $h\colon B\to B\ot B$ be the degree $1$ %$A$-module morphism
linear map defined on monomials in~$B$ written in the canonical form as follows: $h(a)=0$, and
\begin{eqnarray*}
h(ae_{x_1}\cdots e_{x_n})&:=&\sum_{i=1}^n (-1)^{i-1}(a\ot a) (\tau\Delta)(e_{x_1}\cdots e_{x_{i-1}})(e_{x_i} \ot e_{x_i})\Delta(e_{x_{i+1}}\cdots e_{x_n}),
\end{eqnarray*}
where $\tau\colon B\ot B\to B\ot B$ is the signed flip.  % of the two factors.
 Then for any homogeneous $b_1,b_2\in B$ we have
\begin{equation}\label{eq:com-one}
h(b_1b_2)=h(b_1)\Delta(b_2)+(-1)^{|b_1|}(\tau\Delta)(b_1)h(b_2).
\end{equation}
Also, $h$ induces a map $\oB \to \oB \ot \oB$.
\end{lem}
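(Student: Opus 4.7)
My plan is to reformulate $h$ uniformly on arbitrary monomial representatives (not only canonical ones) and then reduce the lemma to a sum-splitting argument. Given $m\in B$ written as a product $v_1\cdots v_n$ of generators $v_i\in X\cup\{e_x\}_{x\in X}$, set
\[
h'(v_1\cdots v_n)\;:=\;\sum_{k\,:\,|v_k|=1}(-1)^{\deg(v_1\cdots v_{k-1})}(\tau\Delta)(v_1\cdots v_{k-1})(v_k\ot v_k)\Delta(v_{k+1}\cdots v_n).
\]
On a canonical form $ae_{x_1}\cdots e_{x_n}$, the degree-$0$ prefactor $a$ contributes no sign, and multiplicativity of $\tau\Delta$ pulls out $(a\ot a)$, so $h'$ recovers the lemma's formula. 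The first step is thus to check that $h'$ is well-defined on $B$, i.e.\ invariant under the two defining relations $yx^y\sim xy$ and $ye_{x^y}\sim e_xy$. The first is trivial: neither side carries an $e$, and one uses $\Delta(yx^y)=\Delta(xy)=xy\ot xy$ together with the corresponding identity for $\tau\Delta$ to handle the prefix/suffix contributions. The second is the core calculation: after factoring out the common $(\tau\Delta)(\text{prefix})$ and $\Delta(\text{suffix})$, it reduces to $(\tau\Delta)(y)(e_{x^y}\ot e_{x^y})=(e_x\ot e_x)\Delta(y)$, which unfolds (Koszul signs included) to the equality $ye_{x^y}\ot ye_{x^y}=e_xy\ot e_xy$ in $B\ot B$.

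With this uniform description, \eqref{eq:com-one} follows by a clean split. Concatenating generator representatives of $b_1$ and $b_2$, the $e$-positions in the sum for $h(b_1b_2)$ lie either in the $b_1$-part or in the $b_2$-part. The $b_1$-contributions reassemble, via multiplicativity of $\Delta$ on the suffix, into $h(b_1)\Delta(b_2)$. The $b_2$-contributions, via multiplicativity of $\tau\Delta$ on the prefix, and using $(-1)^{\deg(\text{prefix})}=(-1)^{|b_1|}(-1)^{\deg(b_2\text{-prefix})}$, reassemble into $(-1)^{|b_1|}(\tau\Delta)(b_1)h(b_2)$. The only non-trivial ingredient is the multiplicativity of $\tau\Delta$, which reduces to the signed flip $\tau$ being an algebra automorphism of the Koszul-signed tensor algebra $B\ot B$: a routine parity check comparing $|b||c|+(|a|+|c|)(|b|+|d|)$ with $|a||b|+|c||d|+|a||d|$ modulo~$2$.

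For the descent $h\colon\oB\to\oB\ot\oB$: applying the lemma's formula to $xb$ for $x\in X$ and $b\in B$ gives $h(xb)=(x\ot x)h(b)$ directly, since the extra generator $x$ is absorbed into the $A$-prefactor. In $\oB\ot\oB$ the generator $x$ is identified with $1$, so $(x\ot x)h(b)\equiv h(b)$ modulo the relations defining the quotient, and $h$ descends to $\oB$.

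The main obstacle is the well-definedness check of $h'$ under the relation $ye_{x^y}\sim e_xy$: it requires a delicate accounting of Koszul signs across three tensor factors in $B\ot B$. Once this groundwork is complete, both parts of the lemma follow mechanically from standard bialgebra manipulations.
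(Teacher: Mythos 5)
Your argument is correct, and its engine is the same as the paper's: both proofs exploit that $\Delta$ and $\tau\Delta$ are algebra morphisms on the Koszul-signed tensor square, expand $h$ as a sum over the marked $e$-factor, and obtain \eqref{eq:com-one} by splitting that sum according to whether the marked factor sits in $b_1$ or in $b_2$. Where you diverge is in the treatment of monomials not in canonical form. The paper never leaves canonical forms: it first proves \eqref{eq:com-one} for $b_2$ a pure product of $e_x$'s, and then handles a general $b_2'=a_2b_2$ (with $a_2\in\langle X\rangle$) by writing $h(b_1(a_2b_2))=h((b_1a_2)b_2)$ and using the left/right $X$-equivariance of $h$, $\Delta$, and $\tau\Delta$ to pull $a_2\ot a_2$ back out. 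You instead promote the defining formula to an expression $h'$ on arbitrary words in the generators and verify its invariance under the two defining relations of $B$, the essential case being $(\tau\Delta)(y)(e_{x^y}\ot e_{x^y})=(e_x\ot e_x)\Delta(y)$, which indeed reduces (with vanishing Koszul signs, since $|y|=0$) to $ye_{x^y}\ot ye_{x^y}=e_xy\ot e_xy$; the sign $(-1)^{\deg(\text{prefix})}$ is insensitive to the relations because generators from $X$ have degree $0$. Your route costs an extra well-definedness check but buys a single uniform splitting argument with no case distinction on where the degree-zero letters sit; the paper's route avoids that check at the price of the equivariance manipulation. Both yield the descent to $\oB$ the same way, via $h(xb)=(x\ot x)h(b)\sim h(b)$. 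The only point I would ask you to make fully explicit in a written version is the bijection between the terms of the two sums under the relation $ye_{x^y}\sim e_xy$ (the marked $e$-generators correspond one to one, and for unmarked positions the relation is absorbed into $(\tau\Delta)(\text{prefix})$ or $\Delta(\text{suffix})$, which are already known to be well defined on $B$); you gesture at this but it is the crux of the well-definedness claim.
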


The induced map will still be denoted by~$h$.

\begin{proof}
Using the fact that both $\Delta$ and $\tau\Delta$ are algebra morphisms, one rewrites the definition of~$h$ as
\begin{align*}
h(ae_{x_1}\cdots e_{x_n})=&\sum_{i=1}^n (-1)^{i-1}(a\ot a)(\tau\Delta)(e_{x_1})\cdots (\tau\Delta)(e_{x_{i-1}})(e_{x_i} \ot e_{x_i})\Delta(e_{x_{i+1}})\cdots\Delta(e_{x_n}).
\end{align*}
This immediately yields \eqref{eq:com-one} on $b_1=a_1e_{x_1}\cdots e_{x_p}$ (i.e., any monomial in $B$) and $b_2=e_{x_{p+1}}\cdots e_{x_{p+q}}$. To check \eqref{eq:com-one} on general monomials $b_1$ and $b'_2=a_2b_2$, with $a_2 \in \langle X \rangle$ and $b_2$ a product of the $e_x$'s, one observes that the maps $h$, $\Delta$, and $\tau\Delta$ are $X$-equivariant both on the left and on the right, which gives
\begin{align*}
h(b_1(a_2b_2))&=h((b_1a_2)b_2) = h(b_1a_2)\Delta(b_2)+(-1)^{|b_1a_2|}(\tau\Delta)(b_1a_2)h(b_2) \\
&= h(b_1)(a_2\ot a_2)\Delta(b_2)+(-1)^{|b_1|}(\tau\Delta)(b_1)(a_2\ot a_2)h(b_2) \\
&= h(b_1)\Delta(a_2b_2)+(-1)^{|b_1|}(\tau\Delta)(b_1)h(a_2b_2).
\end{align*}

Finally, $h$ survives in the quotient $\oB$ since its $X$-equivariance reads $h(ae_{x_1}\cdots e_{x_n}) = (a \ot a) h(e_{x_1}\cdots e_{x_n}) \sim h(e_{x_1}\cdots e_{x_n})$, with the usual notations.
\end{proof}

For example, an easy computation gives
\[h(e_xe_y)=(xe_y+e_x)\otimes e_xe_y-e_xe_y\otimes(e_xy+e_y),\]
which in $\oB$ becomes
\[h(e_xe_y)=(e_y+e_x)\otimes e_xe_y-e_xe_y\otimes(e_{x^y}+e_y).\]

\begin{prop}\label{homotopy}
The map $h$ is a homotopy between $\Delta$ and $\tau\Delta$: %, i.e. the following holds:
\begin{equation}\label{eq:homotopy}
(d\ot \Id_B+\Id_B\ot d) h+h d=\Delta-\tau\Delta.
\end{equation}
\end{prop}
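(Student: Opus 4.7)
The plan is to reduce \eqref{eq:homotopy} to a finite verification on the algebra generators of $B$, by showing that both sides of the claimed identity obey the same ``twisted derivation'' rule as $h$ itself does in Lemma~\ref{com-one}. Write $D:=d\otimes\Id_B+\Id_B\otimes d$ (a graded derivation of the algebra $B\otimes B$), set $\psi:=Dh+hd$, and let $\phi:=\psi-(\Delta-\tau\Delta)$. The goal is $\phi\equiv 0$.

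First I would prove that for any homogeneous $b_1,b_2\in B$,
\[
\psi(b_1b_2)=\psi(b_1)\Delta(b_2)+\tau\Delta(b_1)\psi(b_2),
\qquad
(\Delta-\tau\Delta)(b_1b_2)=(\Delta-\tau\Delta)(b_1)\Delta(b_2)+\tau\Delta(b_1)(\Delta-\tau\Delta)(b_2).
\]
The second identity is immediate from the fact that $\Delta$ and $\tau\Delta$ are algebra morphisms $B\to B\otimes B$, the latter because $\tau$ is a graded algebra automorphism of $B\otimes B$. For the first identity, apply $D$ to the Leibniz-type formula \eqref{eq:com-one}, apply $h$ to $d(b_1b_2)=d(b_1)b_2+(-1)^{|b_1|}b_1d(b_2)$, and use that $\Delta$ and $\tau\Delta$ are chain maps, i.e.\ $D\Delta=\Delta d$ (from $d$ being a coderivation) and $D\tau\Delta=\tau\Delta d$ (the extra identity $D\tau=\tau D$ on $B\otimes B$ is a direct sign check using that $\tau(a\otimes b)=(-1)^{|a||b|}b\otimes a$). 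The four cross terms of the form $h(b_1)\Delta(db_2)$ and $\tau\Delta(db_1)h(b_2)$ then cancel in pairs thanks to matching Koszul signs, leaving exactly the claimed biderivation formula for $\psi$. Consequently $\phi$ is also a $(\tau\Delta,\Delta)$-biderivation, so it vanishes on every product as soon as it vanishes on the algebra generators of $B$.

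It then remains to check $\phi=0$ on $1$ and on the generators $x,e_x$ ($x\in X$). On $1$ one has $h(1)=0$, $d(1)=0$ and $\Delta(1)=\tau\Delta(1)=1\otimes 1$, so $\phi(1)=0$. On each $x\in X$, all of $h$, $d$ and $\Delta-\tau\Delta$ vanish (since $x$ is group-like and of degree~$0$, and $h$ vanishes in degree~$0$). On $e_x$ the defining formula gives $h(e_x)=e_x\otimes e_x$; combined with $d(e_x)=1-x$ (so $hd(e_x)=0$) and $Dh(e_x)=(1-x)\otimes e_x-e_x\otimes(1-x)$, one finds
\[
\psi(e_x)=(1-x)\otimes e_x+e_x\otimes(x-1),
\]
which coincides with $(\Delta-\tau\Delta)(e_x)=e_x\otimes x+1\otimes e_x-x\otimes e_x-e_x\otimes 1$. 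Hence $\phi(e_x)=0$, concluding the proof.

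I expect the only real difficulty to be keeping the Koszul signs straight in the biderivation computation for $\psi$, and in particular verifying the chain-map identity $D\tau\Delta=\tau\Delta d$; once these signs are handled, everything else collapses to a one-line check on each generator.
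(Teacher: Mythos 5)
Your proposal is correct and follows essentially the same route as the paper: the paper likewise verifies the identity on the generators $x$ and $e_x$ and then propagates it to all of $B$ by applying $d$ to the twisted Leibniz rule \eqref{eq:com-one} and cancelling the cross terms via the chain-map property of $\Delta$ and $\tau\Delta$ (phrased there as an induction on degree rather than as a vanishing biderivation, which is the same argument). All your sign checks, including $D\tau=\tau D$ and the computation of $\psi(e_x)$, agree with the paper's.
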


Of course, $h$ remains a homotopy in $\oB$ as well.

\begin{proof}
We will use the short-hand notation $dh := (d\ot \Id_B+\Id_B\ot d) h$.

If $x$ is a degree zero generator of $B$ we have $(dh+hd)(x)=0$, and $\Delta(x)=x\otimes x=(\tau\Delta)(x)$, hence \eqref{eq:homotopy} holds. Now, for a degree one generator $e_x$ we have
\begin{eqnarray*}
(dh+hd)(e_x)&=&d(e_x\otimes e_x)+0\\
&=&(1-x)\otimes e_x-e_x\otimes (1-x)\\
&=&-x\otimes e_x+e_x\otimes x+1\otimes e_x-e_x\otimes 1\\
&=&(\Delta-\tau\Delta)(e_x).
\end{eqnarray*}
The proof can then be carried out by induction on the degree, using \eqref{eq:com-one}:
\begin{eqnarray*}
(dh+hd)(b_1b_2)&=&d\big(h(b_1)\Delta(b_2)+(-1)^{|b_1|}(\tau \Delta)(b_1)h(b_2)\big)+h\big(db_1 \cdot b_2+(-1)^{|b_1|}b_1\cdot db_2\big)\\
&=&dh(b_1)\Delta(b_2)+(-1)^{|b_1|+1}h(b_1)d\Delta(b_2)\\
&&+(-1)^{|b_1|}d(\tau\Delta)(b_1)h(b_2)+(\tau\Delta)(b_1)dh(b_2)\\
&&+hd(b_1)\Delta(b_2)+(-1)^{|b_1|+1}(\tau\Delta)(db_1)h(b_2)\\
&&+(-1)^{|b_1|}h(b_1)\Delta(db_2)+(\tau\Delta)(b_1)hd(b_2)\\
&=&(\Delta-\tau\Delta)(b_1)\Delta(b_2)+(\tau\Delta)(b_1)(\Delta-\tau\Delta)(b_2)\\
&=&\Delta(b_1)\Delta(b_2) - (\tau\Delta)(b_1)(\tau\Delta)(b_2)\\
&=&(\Delta-\tau\Delta)(b_1b_2). \qedhere
\end{eqnarray*}
\end{proof}

\begin{thm}\label{cup-product}
For a shelf $X$, the map $h$ induces a homotopy between the cup product $\smile$ and its opposite version $\smile^{op}:=\smile \tau$ on $C^\bullet(X)$ and $C^\bullet(X,M(X))$. 
\end{thm}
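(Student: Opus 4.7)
The plan is to transport the homotopy $h\colon B\to B\otimes B$ of Proposition~\ref{homotopy} to the cochain complex by pre-composition. For cochains $f\in C^p$ and $g\in C^q$, regarded as linear maps $B\to\ZZ$ (or $\oB\to\ZZ$) vanishing outside the designated degree, define
\[ H(f,g) := (f\otimes g)\circ h, \]
interpreting $f\otimes g$ with the Koszul sign. Since $h$ has total degree $+1$, the element $H(f,g)$ is nontrivial only on monomials of degree $p+q-1$, so $H(f,g)\in C^{p+q-1}$.

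The key computation is to apply $\partial^*$ to $H(f,g)$ and invoke the identity
\[ (d\otimes\Id+\Id\otimes d)\,h + h\,d = \Delta - \tau\Delta \]
from Proposition~\ref{homotopy}. Using the defining relation $\partial^* \varphi = (-1)^{|\varphi|}\varphi\circ d$, I would rewrite
\[ (-1)^{p+q-1}\partial^*H(f,g) = (f\otimes g)\bigl(\Delta-\tau\Delta - (d\otimes\Id+\Id\otimes d)h\bigr). \]
The first two terms reproduce $f\smile g$ and $f\smile^{op}g$, since by definition of the cup product $(f\otimes g)\Delta = f\smile g$, and a short check with the signed flip shows $(f\otimes g)\tau\Delta = (-1)^{pq}(g\otimes f)\Delta = f\smile^{op}g$. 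The remaining terms $(f\otimes g)(d\otimes\Id)h$ and $(f\otimes g)(\Id\otimes d)h$ are then re-expressed, up to Koszul signs, as $H(\partial^*f,g)$ and $H(f,\partial^*g)$ respectively. This yields the desired homotopy formula of the shape
\[ f\smile g - f\smile^{op}g = \pm\,\partial^*H(f,g) \,\pm\, H(\partial^*f,g) \,\pm\, H(f,\partial^*g), \]
from which Theorem~\ref{cup-product} follows directly.

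To cover both complexes simultaneously, I would note that by Lemma~\ref{com-one} the map $h$ descends from $B$ to $\oB$, and is $A$-equivariant on $B$. Combined with the identifications $(C^\bullet_M,\partial^*)\cong B^*$ and $(C^\bullet,\partial^*)\cong\oB^*$ of Proposition~\ref{Bvscomplejo2}, the construction of $H$ works uniformly in both settings, with no additional verification needed beyond what is already encoded in $h$.

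The only real obstacle is the careful bookkeeping of Koszul signs: one has to match the signs coming from $\tau$, from evaluating $f\otimes g$ on a graded tensor, from the convention $\partial^* = (-1)^{|\cdot|}(-\circ d)$, and from the differential $d\otimes\Id+\Id\otimes d$ on $B\otimes B$. Once these are aligned with the sign conventions fixed in Section~\ref{S:Bialgebra}, the identity drops out mechanically from Proposition~\ref{homotopy}, and no further combinatorial input is required.
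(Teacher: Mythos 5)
Your proposal is correct and takes essentially the same route as the paper: both define the cochain-level homotopy by dualizing $h$, i.e.\ $H(f\otimes g)=\mu(f\otimes g)\,h$, and deduce the statement from the identity $(d\otimes\Id+\Id\otimes d)h+hd=\Delta-\tau\Delta$ of Proposition~\ref{homotopy}, together with the descent of $h$ to $\oB$ and its $A$-equivariance for the two coefficient choices. The only difference is presentational: the paper compresses the sign bookkeeping into ``a standard argument,'' whereas you unwind $\partial^*H(f,g)$ into the terms $H(\partial^*f,g)$ and $H(f,\partial^*g)$ explicitly.
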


Using a standard argument, we obtain an elementary algebraic proof of the commutativity of the cup product on the rack cohomology $H_{\Rack}(X)$ and $H_{\Rack}(X,M(X))$. The same result holds for the more general cohomologies  $H_{\Rack}(X,k)$ and $H_{\Rack}(X,kM(X))$. 

\begin{proof}
The cup product of two cochains $f$ and $g$ is given by the convolution product
\[f\smile g=\mu (f\ot g)\Delta,\]
where the coproduct $\Delta$ is taken in $\oB$, and $\mu$ is the multiplication in $\ZZ$. Hence for any homogeneous $x\in \oB$ of degree $|f|+|g|$ we have
\begin{eqnarray*}
\big(f\smile g-(-1)^{|f||g|}g\smile f\big)(x)&=&\sum_{(x)}(-1)^{|f||g|}f(x_1)g(x_2)-g(x_1)f(x_2)\\
&=&\sum_{(x)}(-1)^{|x_1||x_2|}f(x_1)g(x_2)-f(x_2)g(x_1)\\
&=&\mu (f\otimes g) (\Delta-\tau\Delta)(x)\\
&=&\mu (f\otimes g) (hd+dh)(x).
\end{eqnarray*}
We used Sweedler's notation for $\Delta(x)$. Hence $H\colon \mop{Hom}_A(B,\ZZ)^{\ot 2}\to\mop{Hom}_A(B,\ZZ)$ defined by
\[H(f\ot g):=\mu (f\otimes g) h\]
is a homotopy between $\smile$ and $\smile^{op}$. The proof for the cohomology with coefficients in $M$ is similar.
\end{proof}

%%%%%%
\section{Rack cohomology is a Zinbiel algebra}\label{S:Z}
%%%%%%

We now refine the coproduct~$\Delta$ on $\oB(X)$ to an (almost) \emph{d.g. codendriform structure}. That is, in positive degree it decomposes as $\Delta=\lDelta+\rDelta$, the two parts $\lDelta$ and $\rDelta$ being compatible. Moreover, we establish the relation $\rDelta = \tau\lDelta$ (where $\tau$ is as usual the signed flip), up to an explicit homotopy~$\oh$. This latter is inspired by the homotopy~$h$ from Section~\ref{S:h}, and is, to our knowledge, new.  We thus recover the \emph{Zinbiel product} on rack cohomology, first described by S.~Covez in~\cite{C2}. 

Coalgebras need not be unital in this section. General definitions are given over a unital commutative ring $k$; in particular, all the tensor products are taken over~$k$ here.

\begin{defn}
A graded coalgebra $(C=\oplus_{i \geqslant 0}C_i,\Delta)$ is called \emph{$+$-codendriform} if there exist two maps $\lDelta \colon C^+ \to C^+ \ot C$ and $\rDelta \colon C^+\to C \ot C^+$ of degree $0$ satisfying
\begin{align}
(\lDelta\ot\Id)\lDelta=(\Id\ot\Delta)\lDelta,\label{E:dendri1}\\
(\Id\ot\rDelta)\rDelta=(\Delta\ot\Id)\rDelta,\label{E:dendri2}\\
(\Id\ot\lDelta)\rDelta=(\rDelta\ot\Id)\lDelta,\label{E:dendri3}
\end{align}
$\Delta$ decomposes as $\lDelta+\rDelta$ on $C^+$, and $\Delta$ is coassociative on $C_0$. Here $C^+=\oplus_{i \geqslant 1}C_i$. It is called \emph{$+$-coZinbiel} if moreover $\rDelta = \tau\lDelta$, where $\tau$ is the signed flip. A \emph{d.g. $+$-codendriform / $+$-coZinbiel coalgebra} carries in addition a differential $d$ satisfying
\begin{align}
\lDelta d &=(d \ot \Id)\lDelta + (\Id\ot d)\lDelta \qquad \text{ on } \oplus_{i \geqslant 2}C_i,\label{E:dendri4}\\
\rDelta d &=(d \ot \Id)\rDelta + (\Id\ot d)\rDelta \qquad \text{ on } \oplus_{i \geqslant 2}C_i,\label{E:dendri5}\\
\Delta d &=(d \ot \Id)\lDelta + (\Id\ot d)\rDelta \qquad \text{ on } C_1.\label{E:dendri6}
\end{align}
Dually, one defines (d.g.) \emph{$+$-dendriform} and \emph{$+$-Zinbiel} algebras.
\end{defn}

A typical example of a $+$-codendriform coalgebra is a positively graded codendriform coalgebra $(C^+,\lDelta^+,\rDelta^+)$, extended by a unit: $C:=C^+\oplus k1$, with $\Delta(1)=\lDelta(1)=\rDelta(1)=1\ot 1$, and $\lDelta(c)=\lDelta^+(c)+c\ot 1$ and $\rDelta(c)=\rDelta^+(c)+1 \ot c$ for all $c \in C$. One can also go in the opposite direction:

\begin{lem}\label{L:dedndri}
Let $(C,\Delta,\lDelta,\rDelta)$ be a $+$-codendriform coalgebra. Denote by $\varepsilon \colon C \to C^+$ and $\iota \colon C^+ \to C$ the obvious projection and inclusion, where $C^+:= \oplus_{i > 0}C_i$. Put $\Delta^+:=(\varepsilon \ot \varepsilon) \Delta \iota$, $\lDelta^+:=(\varepsilon \ot \varepsilon) \lDelta \iota$, $\rDelta^+:=(\varepsilon \ot \varepsilon) \rDelta \iota$. Then $(C^+,\lDelta^+,\rDelta^+)$ is a codendriform coalgebra.
\end{lem}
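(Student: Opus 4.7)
The plan is to verify the three Loday coidentities for $\lDelta^+$ and $\rDelta^+$ by projecting the corresponding identities for $\lDelta$ and $\rDelta$ from $C$ onto $C^+$ via $\varepsilon$.

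The key observation I would establish first is that since $C$ is a graded coalgebra, we have $\Delta(C_0)\subseteq C_0\otimes C_0$, and this subspace is annihilated by $\varepsilon\otimes\varepsilon$. It follows that for every $b\in C$,
\[
(\varepsilon\otimes\varepsilon)\Delta(b) \;=\; \Delta^+(\varepsilon(b)),
\]
since both sides agree on $C^+$ by definition of $\Delta^+$, and both vanish on $C_0$. A second elementary observation is that $\varepsilon$ is the identity on $C^+$: because $\lDelta(C^+)\subseteq C^+\otimes C$, on $C^+$ one has $(\varepsilon\otimes\varepsilon)\lDelta = (\Id\otimes\varepsilon)\lDelta = \lDelta^+$, and similarly $(\varepsilon\otimes\varepsilon)\rDelta = (\varepsilon\otimes\Id)\rDelta = \rDelta^+$.

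Armed with these two facts, I would take each of \eqref{E:dendri1}, \eqref{E:dendri2}, \eqref{E:dendri3}, precompose with $\iota$ (restricting input to $C^+$), and apply $(\varepsilon\otimes\varepsilon\otimes\varepsilon)$ to both sides. For \eqref{E:dendri1}, the left-hand side $(\lDelta\otimes\Id)\lDelta\,\iota$ collapses to $(\lDelta^+\otimes\Id)\lDelta^+$ by applying the second observation to both the inner and the outer $\lDelta$, while the right-hand side $(\Id\otimes\Delta)\lDelta\,\iota$ collapses to $(\Id\otimes\Delta^+)\lDelta^+$ via the second observation on the outer $\lDelta$ and the first observation on the inner $\Delta$. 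The analogous manipulations for \eqref{E:dendri2} and \eqref{E:dendri3} are symmetric, with \eqref{E:dendri3} being the easiest since no occurrence of $\Delta$ appears and only the second observation is needed. The decomposition $\Delta^+ = \lDelta^++\rDelta^+$ on $C^+$ then follows from $\Delta = \lDelta + \rDelta$ on $C^+$ by applying $(\varepsilon\otimes\varepsilon)\iota$, and coassociativity of $\Delta^+$ is a formal consequence of the three Loday relations just established.

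I do not expect any serious obstacle: the main care required is bookkeeping, namely tracking which tensor slots land in $C^+$ versus in $C$, so as to know when $\varepsilon$ acts as the identity (on $C^+$, whenever one of $\lDelta$ or $\rDelta$ is the outermost operation on that factor) versus when the grading argument is needed to kill a possible $C_0$-contribution (only when $\Delta$ is applied to a factor that may land in $C_0$). Once the two observations above are in place, the verification of each relation is a single line.
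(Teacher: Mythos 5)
Your proof is correct, and it is exactly the ``straightforward'' verification the paper alludes to without writing out: the paper offers no proof beyond that remark, and your two observations (that $(\varepsilon\ot\varepsilon)\Delta=\Delta^+\varepsilon$ because $\Delta(C_0)\subseteq C_0\ot C_0$ is killed by $\varepsilon\ot\varepsilon$, and that $\varepsilon$ acts as the identity on the tensor slots constrained to lie in $C^+$) are precisely what is needed to push \eqref{E:dendri1}--\eqref{E:dendri3} through $\varepsilon^{\ot 3}$. The only point worth making explicit in a write-up is the slot bookkeeping you already mention, e.g.\ that in $(\Id\ot\Delta)\lDelta$ the second factor may land in $C_0$, so the first observation (and not just ``$\varepsilon=\Id$ on $C^+$'') is genuinely needed there.
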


The proof is straightforward. These observations explain our choice of the name. In the literature there exist alternative approaches to such ``almost codendriform'' structures.

Finally, one easily checks that a $+$-codendriform structure refines a coassociative one: 

\begin{lem}\label{L:dedndri_ass}
In a (d.g.) $+$-codendriform coalgebra, the coproduct~$\Delta$ is necessarily coassociative (and compatible with the differential).
\end{lem}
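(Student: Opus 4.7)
The plan is to derive both coassociativity and differential compatibility directly from the codendriform axioms by an additive combination, with a bit of low-degree bookkeeping at the end.

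For coassociativity on $C^+$, I would expand $(\Delta \ot \Id)\Delta$ into four terms using $\Delta = \lDelta + \rDelta$, and similarly for $(\Id \ot \Delta)\Delta$. Axiom \eqref{E:dendri1} rewrites $(\lDelta \ot \Id)\lDelta$ as $(\Id \ot \Delta)\lDelta = (\Id \ot \lDelta)\lDelta + (\Id \ot \rDelta)\lDelta$; axiom \eqref{E:dendri2} rewrites $(\Delta \ot \Id)\rDelta = (\lDelta \ot \Id)\rDelta + (\rDelta \ot \Id)\rDelta$ as $(\Id \ot \rDelta)\rDelta$; and axiom \eqref{E:dendri3} identifies the remaining cross term $(\rDelta \ot \Id)\lDelta$ with $(\Id \ot \lDelta)\rDelta$. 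Summing these three identities yields exactly the coassociativity relation on $C^+$. On $C_0$, coassociativity is part of the hypothesis, so there is nothing to prove.

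For compatibility with the differential, on $\bigoplus_{i \ge 2} C_i$ the result follows by adding \eqref{E:dendri4} and \eqref{E:dendri5} and regrouping $(d \ot \Id)\lDelta + (d \ot \Id)\rDelta = (d \ot \Id)\Delta$, and similarly on the right tensor factor. On $C_1$, axiom \eqref{E:dendri6} only supplies the two ``cross'' terms; I would argue that the other two terms vanish for degree reasons. Since $\lDelta$ and $\rDelta$ preserve the grading and have images in $C^+ \ot C$ and $C \ot C^+$ respectively, degree $1$ forces $\lDelta(C_1) \subseteq C_1 \ot C_0$ and $\rDelta(C_1) \subseteq C_0 \ot C_1$. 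As $d$ is zero on $C_0$, both $(\Id \ot d)\lDelta$ and $(d \ot \Id)\rDelta$ vanish on $C_1$, and the required identity matches \eqref{E:dendri6} exactly. On $C_0$ the compatibility is trivial since $d$ already vanishes there and $\Delta(C_0) \subseteq C_0 \ot C_0$.

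There is no serious obstacle; the only subtlety is the low-degree bookkeeping on $C_0$ and $C_1$, which is handled by the image constraints built into the definition together with $d|_{C_0}=0$. No additional computation beyond rearranging the three pairs of codendriform axioms is needed.
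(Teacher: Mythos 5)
Your proof is correct and is precisely the routine verification the paper leaves to the reader: summing axioms \eqref{E:dendri1}--\eqref{E:dendri3} yields coassociativity on $C^+$, summing \eqref{E:dendri4}--\eqref{E:dendri5} yields $d$-compatibility in degrees $\geqslant 2$, and the degree bookkeeping ($\lDelta(C_1)\subseteq C_1\ot C_0$, $\rDelta(C_1)\subseteq C_0\ot C_1$, $d|_{C_0}=0$) handles degrees $0$ and $1$. The only cosmetic caveat is that terms such as $(\lDelta\ot\Id)\rDelta$ are not individually defined (the first tensor factor of $\rDelta$ may land in $C_0$, where $\lDelta$ and $\rDelta$ do not act), but you only ever use them regrouped into the well-defined combinations $(\Delta\ot\Id)\rDelta$ and $(\Id\ot\Delta)\lDelta$, so the argument stands.
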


Let us now return to shelves and their associated d.g. bialgebras.
\begin{prop}\label{P:dendri}
Let $X$ be a shelf. Define two maps $\lDelta \colon B(X)^+\to B(X)^+ \ot B(X)$ and $\rDelta \colon B(X)^+ \to B(X) \ot B(X)^+$ as follows:
\begin{align*}
&\lDelta(ae_{x_1}\cdots e_{x_n})=(a e_{x_1} \ot a x_1) \Delta (e_{x_2}\cdots e_{x_n}),\\ 
&\rDelta(ae_{x_1}\cdots e_{x_n})=(a \ot a e_{x_1}) \Delta (e_{x_2}\cdots e_{x_n}),
\end{align*}
where as usual we use the canonical form of monomials in $B(X)$, and extend this definition by linearity. These maps and the coproduct $\Delta$ yield a $+$-codendriform structure on~$B(X)$.
\end{prop}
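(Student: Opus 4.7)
The plan is to view $\lDelta$ and $\rDelta$ as the two summands of $\Delta$ obtained by splitting the coproduct of the leading $e$-generator of a monomial. Since $\Delta$ is an algebra morphism and $\Delta(e_{x_1})=e_{x_1}\otimes x_1+1\otimes e_{x_1}$, one has
\[
\Delta(ae_{x_1}\cdots e_{x_n})=(a\otimes a)\Delta(e_{x_1})\Delta(e_{x_2}\cdots e_{x_n}),
\]
and distributing the two summands of $\Delta(e_{x_1})$ yields $\Delta=\lDelta+\rDelta$ on $B^+$ directly from the definitions (using $|x_1|=0$ to kill the Koszul sign in the $\lDelta$ piece). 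Coassociativity of $\Delta$ on $B_0=A$ is immediate since each $x\in X$ is group-like.

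It remains to verify \eqref{E:dendri1}, \eqref{E:dendri2}, \eqref{E:dendri3}. Since $\Delta$, $\lDelta$, and $\rDelta$ are all $A$-bilinear with $A$ acting diagonally, it suffices to verify these on $b=e_{x_1}b'$ with $b'=e_{x_2}\cdots e_{x_n}$; the prefactor $a$ is then reinstated by $A$-bilinearity. Writing $\Delta(b')=\sum u_{(1)}\otimes u_{(2)}$ in Sweedler notation, I obtain
\[
\lDelta(b)=\textstyle\sum e_{x_1}u_{(1)}\otimes x_1u_{(2)},\qquad \rDelta(b)=\textstyle\sum(-1)^{|u_{(1)}|}u_{(1)}\otimes e_{x_1}u_{(2)}.
\]
In each axiom, I apply the outer operation and use two observations: first, the leading $e$-generator of $e_{x_1}u_{(i)}$ is still $e_{x_1}$, so $\lDelta$ and $\rDelta$ of such expressions can again be expanded using the above formulas; second, $x_1$ is group-like, so $\Delta(x_1v)=(x_1\otimes x_1)\Delta(v)$. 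After factoring out a common tensor monomial built from $e_{x_1}$ and $x_1$, each side of each axiom becomes an iterated coproduct of $b'$, so coassociativity of $\Delta$ on $b'$ closes the argument. For instance, in \eqref{E:dendri1} both sides reduce to $e_{x_1}\otimes x_1\otimes x_1$ times the standard coassociativity identity $\sum u_{(11)}\otimes u_{(12)}\otimes u_{(2)}=\sum u_{(1)}\otimes u_{(21)}\otimes u_{(22)}$.

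The main obstacle is the Koszul sign bookkeeping generated by the $(1\otimes e_{x_1})$ factors appearing in $\rDelta$. Each such factor acting on $v_{(1)}\otimes v_{(2)}$ contributes $(-1)^{|v_{(1)}|}$, while $(e_{x_1}\otimes x_1)$ contributes nothing; I must check that the signs on the two sides of \eqref{E:dendri2} and \eqref{E:dendri3} agree after applying coassociativity and relabeling. For \eqref{E:dendri3} this reduces to a single sign $(-1)^{|u_{(1)}|}=(-1)^{|u_{(11)}|}$ after the identification $u_{(1)}\leftrightarrow u_{(11)}$. For \eqref{E:dendri2} the LHS sign $(-1)^{|u_{(1)}|+|u_{(21)}|}$ and the RHS sign $(-1)^{|u_{(1)}|}$ match via the additivity $|u_{(1)}|=|u_{(11)}|+|u_{(12)}|$ that is automatic on each side of the coassociativity identity. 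Once these sign checks are in place, the reduction to $b=e_{x_1}b'$ via $A$-bilinearity finishes the proof.
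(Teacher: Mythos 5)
Your proof is correct and follows essentially the same route as the paper's: both evaluate each codendriform axiom on a canonical monomial $ae_{x_1}\cdots e_{x_n}$ and reduce both sides, using that $\Delta$ is a coassociative algebra morphism, to a common expression of the form $(\text{prefix tensor})\cdot\Delta^2(e_{x_2}\cdots e_{x_n})$; the paper simply records this common value instead of writing out the Sweedler expansion and the sign bookkeeping. One small imprecision: in \eqref{E:dendri2} and \eqref{E:dendri3} you apply $\lDelta$ or $\rDelta$ to $e_{x_1}u_{(2)}$, where the right Sweedler component $u_{(2)}$ of $\Delta(e_{x_2}\cdots e_{x_n})$ carries a monomial prefix $a$ from $A$, so the canonical form of $e_{x_1}u_{(2)}$ has leading generator $e_{x_1^{a}}$ rather than $e_{x_1}$; the identities $\lDelta(e_{x_1}w)=(e_{x_1}\ot x_1)\Delta(w)$ and $\rDelta(e_{x_1}w)=(1\ot e_{x_1})\Delta(w)$ that you actually invoke do still hold for such $w$, but this follows from the relations $e_xa=ae_{x^a}$ and $xa=ax^a$ in $B$ rather than from inspection of the leading generator, and deserves a line of justification.
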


\begin{proof}
Put $\Delta^2= (\Delta\ot\Id)\Delta=(\Id\ot\Delta)\Delta$. Then both sides of~\eqref{E:dendri1} act on a canonical monomial as follows:
\[ae_{x_1}\cdots e_{x_n} \mapsto (a e_{x_1} \ot a x_1 \ot a x_1)\Delta^2 (e_{x_2}\cdots e_{x_n}).\]
Similarly, both sides of~\eqref{E:dendri2} and~\eqref{E:dendri3} act by 
\begin{align*}
ae_{x_1}\cdots e_{x_n} &\mapsto (a \ot a \ot a e_{x_1})\Delta^2 (e_{x_2}\cdots e_{x_n}) \quad \text{ and}\\
ae_{x_1}\cdots e_{x_n} &\mapsto (a \ot a e_{x_1}  \ot a x_1)\Delta^2 (e_{x_2}\cdots e_{x_n})
\end{align*}
respectively. Thus our maps satisfy relations \eqref{E:dendri1}-\eqref{E:dendri3}. Finally, in positive degree their sum clearly yields $\Delta$, and in degree $0$ the coproduct $\Delta$ is coassociative.
\end{proof}

The maps above are not compatible with the differential in general: one has
\begin{align*}
\lDelta d (e_xe_y) &= e_y \ot y - xe_y \ot xy - e_x \ot x + e_x y \ot xy,\\
(d \ot \Id)\lDelta + (\Id\ot d)\lDelta (e_xe_y) &= e_y \ot xy - xe_y \ot xy - e_x \ot x + e_x y \ot xy\\
&+ 1 \ot xe_y - x \ot xe_y.
\end{align*}

As usual, the solution is to work in the quotient~$\oB(X)$. Indeed, $\lDelta$ and $\rDelta$ descend to maps $\oB(X)^+ \to \oB(X)^+ \ot \oB(X)$ and $\oB(X)^+ \to \oB(X) \ot \oB(X)^+$, still denoted by $\lDelta$ and $\rDelta$, and one has:

\begin{prop}\label{P:dendriH}
The induced maps $\lDelta$ and $\rDelta$ make $\oB(X)$ a d.g. $+$-codendriform coalgebra.
\end{prop}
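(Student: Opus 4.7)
The approach is to leverage Proposition~\ref{P:dendri} and address only what is genuinely new: (i) the descent of $\lDelta$ and $\rDelta$ to $\oB(X)$, and (ii) the compatibility relations \eqref{E:dendri4}--\eqref{E:dendri6}, which fail in $B(X)$ but should hold in $\oB(X)$. All other codendriform axioms, together with the sum formula $\Delta=\lDelta+\rDelta$ in positive degree and coassociativity in degree $0$, transfer from $B(X)$ to $\oB(X)$ by functoriality of the quotient once (i) is in place.

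For (i), I would check directly on a canonical monomial $b$ in $B^+(X)$ that, for any $x\in X$, $\lDelta(xb)=(x\ot x)\lDelta(b)$ and $\rDelta(xb)=(x\ot x)\rDelta(b)$ in $B\ot B$. Since $(x\ot x)c\sim c$ in $\oB\ot\oB$ for every $c\in B\ot B$, descent follows.

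For (ii), relation \eqref{E:dendri6} on a degree-one generator $e_x$ is immediate: $d(e_x)=1-x\sim 0$ in $\oB$, and each of $\Delta d(e_x)$, $(d\ot\Id)\lDelta(e_x)=(1-x)\ot x$, and $(\Id\ot d)\rDelta(e_x)=1\ot(1-x)$ vanishes in $\oB\ot\oB$. For \eqref{E:dendri4} on $ae_{x_1}\cdots e_{x_n}$ with $n\geq 2$, I set $b:=e_{x_2}\cdots e_{x_n}$ and use the derivation rule for $d$ to write
\[
\lDelta d(ae_{x_1}b)=\lDelta(ab)-\lDelta(ax_1b)-\lDelta(ae_{x_1}d(b)),
\]
whose first two terms cancel in $\oB\ot\oB$ because the prefixes $a$ and $ax_1$ both become trivial there. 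Expanding $d\lDelta(ae_{x_1}b)$ via the Leibniz rule, together with the coderivation identity $d\Delta=\Delta d$ from Theorem~\ref{dgb}, yields
\[
d\lDelta(ae_{x_1}b)=\bigl(a(1-x_1)b_1\ot ax_1b_2\bigr)-(ae_{x_1}\ot ax_1)\Delta(d(b)),
\]
with $\Delta(b)=b_1\ot b_2$ in Sweedler's notation; the first summand again vanishes in $\oB\ot\oB$. It remains to identify the second summand with $\lDelta(ae_{x_1}d(b))$, which I would do via the identity
\[
(ae_{x_1}\ot ax_1)\Delta(m)=\lDelta(ae_{x_1}m)\qquad(m\in B),
\]
established by writing $m$ in canonical form $a'e_{y_1}\cdots e_{y_k}$ and applying the two defining relations $yx^y=xy$ and $ye_{x^y}=e_xy$ to commute $a'$ past $e_{x_1}$. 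Relation \eqref{E:dendri5} is handled symmetrically, starting from $\rDelta(ae_{x_1}b)=(a\ot ae_{x_1})\Delta(b)$.

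The main obstacle is the identity $(ae_{x_1}\ot ax_1)\Delta(m)=\lDelta(ae_{x_1}m)$ used above. Although it eventually reduces to a clean $X$-equivariance statement for $\Delta$, verifying it requires a careful application of the commutation relations in $B(X)$ to move the degree-zero prefix of $m$ past $e_{x_1}$, and this is the unique place in the argument where self-distributivity is invoked. Once this identity is in hand, the slogan for the proof is transparent: the only obstruction to $\lDelta$ and $\rDelta$ being chain maps in $B(X)$ is a factor of the form $1-x_1$ on the left, and such factors are precisely what one kills when passing to $\oB(X)$.
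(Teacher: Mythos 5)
Your proposal is correct and follows essentially the same route as the paper: both reduce \eqref{E:dendri4}--\eqref{E:dendri5} to the coderivation property of $\Delta$ by peeling off $e_{x_1}$, using that the resulting degree-zero error terms (the factors $1-x_1$) die in $\oB$, and both hinge on the identity $\lDelta(ae_{x_1}m)=(ae_{x_1}\ot ax_1)\Delta(m)$, which the paper uses in its simplified $\oB$-form $\lDelta(e_{x_1}m)=(e_{x_1}\ot 1)\Delta(m)$ together with $d(e_{x_1}m)=-e_{x_1}d(m)$. The only difference is presentational: the paper passes to $\oB$ first, which makes the computation a short chain of equalities, whereas you keep the prefixes $a$ and $ax_1$ explicit in $B$ and cancel them at the end.
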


\begin{proof}
Recall the interpretation \eqref{E:BBar} of $\oB$ as the quotient of $B$ by $xb \sim b$ for all $x \in X,\ b \in B$. It yield the maps $\lDelta$ and $\rDelta$ symmetric in~$\oB$:
\begin{align*}
&\lDelta(e_{x_1}\cdots e_{x_n})=(e_{x_1} \ot 1) \Delta (e_{x_2}\cdots e_{x_n}), \qquad \rDelta(e_{x_1}\cdots e_{x_n})=(1 \ot e_{x_1}) \Delta (e_{x_2}\cdots e_{x_n}).
\end{align*}
Also, it turns \eqref{E:L1property} into
\begin{align}\label{E:dBbar}
d (e_{x_1}\cdots e_{x_n}) &=-e_{x_1}d (e_{x_2}\cdots e_{x_n}).
\end{align}
We can now establish relation~\eqref{E:dendri4}:
\begin{align*}
\lDelta d (e_{x_1}\cdots e_{x_n}) &=\lDelta(-e_{x_1}d (e_{x_2}\cdots e_{x_n}))\\
&=-(e_{x_1} \ot 1) \Delta d (e_{x_2}\cdots e_{x_n})\\
&=-(e_{x_1} \ot 1)  (d \ot \Id) \Delta (e_{x_2}\cdots e_{x_n})-(e_{x_1} \ot 1)  (\Id \ot d) \Delta (e_{x_2}\cdots e_{x_n})\\
&=(d \ot \Id)(e_{x_1} \ot 1) \Delta (e_{x_2}\cdots e_{x_n})+(\Id \ot d) (e_{x_1} \ot 1)   \Delta (e_{x_2}\cdots e_{x_n})\\
&=(d \ot \Id + \Id \ot d) \lDelta (e_{x_1}\cdots e_{x_n}).
\end{align*}
Relation~\eqref{E:dendri5} is proved similarly. Finally, relation~\eqref{E:dendri6} follows from $\Delta d = (d \ot \Id + \Id \ot d) \Delta$ in degree~$1$.
\end{proof}

\begin{prop}\label{P:Zinbiel}
Define the map $\oh \colon B(X)\to B(X) \ot B(X)$ as follows: $\oh(a)=0$, and
\begin{align*}
\oh(ae_{x_1}\cdots e_{x_n})&=-(a {x_1} \ot a e_{x_1}) h(e_{x_2}\cdots e_{x_n}).
\end{align*}
It induces a map $\oB \to \oB \ot \oB$, still denoted by~$\oh$, which is a homotopy between $\rDelta$ and $\tau\lDelta$.
\end{prop}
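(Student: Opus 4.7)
The plan is to reduce everything to the homotopy $dh+hd=\Delta-\tau\Delta$ from Proposition~\ref{homotopy}. Descent of $\oh$ to $\oB$ is immediate from the defining formula: left multiplication of the argument by $y\in X$ multiplies $\oh$ by $(y\otimes y)$, which acts as the identity on $\oB\otimes\oB$ since $yw\sim w$ in $\oB$ for every $w$.

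The crucial technical ingredient is the identity
\[
\oh(e_xc)\;\sim\;-(1\otimes e_x)\,h(c) \qquad \text{in }\oB\otimes\oB,
\]
valid for every $x\in X$ and $c\in B$. I would prove it by writing $c$ in canonical form $z_1\cdots z_k e_{y_1}\cdots e_{y_m}$ and iterating the defining relation $e_wz=ze_{w^z}$ to put $e_xc$ into canonical form $z_1\cdots z_k e_{x^{z_1\cdots z_k}}e_{y_1}\cdots e_{y_m}$; unpacking the formula for $\oh$ and using the $X$-equivariance of $h$ visible in the proof of Lemma~\ref{com-one}, both sides evaluate to $-(1\otimes e_{x^{z_1\cdots z_k}})h(e_{y_1}\cdots e_{y_m})$ after the $\oB$-quotient kills the leading $z_1\cdots z_k$.

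With this identity in hand, fix $b=ae_{x_1}\cdots e_{x_n}$ of positive degree and set $c=e_{x_2}\cdots e_{x_n}$. In $\oB\otimes\oB$ one has $\oh(b)\sim-(1\otimes e_{x_1})h(c)$; moreover $d(1\otimes e_{x_1})=1\otimes(1-x_1)\sim 0$ since $x_1\sim 1$, so the super-Leibniz rule yields $d\oh(b)\sim(1\otimes e_{x_1})dh(c)$. Using \eqref{E:dBbar} we also have $d(b)\sim -e_{x_1}d(c)$ in $\oB$, and applying the key identity termwise to the canonical pieces of $d(c)$ gives $\oh d(b)\sim(1\otimes e_{x_1})hd(c)$. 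Adding and invoking Proposition~\ref{homotopy} yields
\[
(d\oh+\oh d)(b)\;\sim\;(1\otimes e_{x_1})(\Delta-\tau\Delta)(c).
\]

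To finish, Proposition~\ref{P:dendri} gives $\rDelta(b)\sim(1\otimes e_{x_1})\Delta(c)$ and $\lDelta(b)\sim(e_{x_1}\otimes 1)\Delta(c)$ in the quotient, and a brief Sweedler computation delivers $\tau\bigl((e_{x_1}\otimes 1)\Delta(c)\bigr)=(1\otimes e_{x_1})\tau\Delta(c)$; subtraction matches the displayed formula, and in degree $0$ the statement is vacuous. The main technical hazard I anticipate is Koszul-sign bookkeeping together with confirming that the various $(1-x_1)$-type correction terms actually vanish modulo the $\oB$-relations; once those routine checks are navigated, the proof collapses transparently onto the already-established homotopy for $h$.
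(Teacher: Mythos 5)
Your proposal is correct and follows essentially the same route as the paper: reduce everything to the homotopy $dh+hd=\Delta-\tau\Delta$ of Proposition~\ref{homotopy}, use the simplified form $\oh(e_{x_1}\cdots e_{x_n})=-(1\ot e_{x_1})h(e_{x_2}\cdots e_{x_n})$ in $\oB$ together with~\eqref{E:dBbar}, and identify $(1\ot e_{x_1})(\Delta-\tau\Delta)(e_{x_2}\cdots e_{x_n})$ with $\rDelta-\tau\lDelta$ via Proposition~\ref{P:dendri}. Your explicit ``key identity'' $\oh(e_xc)\sim-(1\ot e_x)h(c)$ for general $c$ is exactly the point the paper leaves implicit when applying $\oh$ to the non-canonical terms of $e_{x_1}d(e_{x_2}\cdots e_{x_n})$, so it is a welcome clarification rather than a different argument.
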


\begin{proof}
The map $\oh$ clearly descends to $\oB$. For this induced map, one has
\begin{align*}
\oh(e_{x_1}\cdots e_{x_n})&=-(1\ot e_{x_1}) h(e_{x_2}\cdots e_{x_n}).
\end{align*}

It remains to check the relation
\[(d\ot \Id_{\oB}+\Id_{\oB}\ot d) \oh+ \oh d=\tau\lDelta-\rDelta \colon \oB^+ \to \oB \ot \oB.\]
Using~\eqref{E:dBbar}, one computes
\begin{align*}
(d\ot \Id) \oh (e_{x_1}\cdots e_{x_n}) &=-(d\ot \Id)(1 \ot e_{x_1}) h(e_{x_2}\cdots e_{x_n}) \\
&=(1 \ot e_{x_1})(d\ot \Id) h(e_{x_2}\cdots e_{x_n}),\\
(\Id\ot d) \oh (e_{x_1}\cdots e_{x_n}) 
&=-(\Id\ot d)(1\ot e_{x_1}) h(e_{x_2}\cdots e_{x_n}) \\
&=(1 \ot e_{x_1})(\Id\ot d) h(e_{x_2}\cdots e_{x_n}),\\
\oh d (e_{x_1}\cdots e_{x_n})&= - \oh(e_{x_1} d(e_{x_2}\cdots e_{x_n})) \\
&=(1 \ot e_{x_1}) hd(e_{x_2}\cdots e_{x_n}).
\end{align*}
The sum yields
\begin{align*}
(1 \ot e_{x_1})&\left((d\ot \Id+\Id\ot d) h + hd\right)(e_{x_2}\cdots e_{x_n})=(1 \ot e_{x_1})\left(\Delta- \tau\Delta \right)(e_{x_2}\cdots e_{x_n})\\
&=(1 \ot e_{x_1})\Delta (e_{x_2}\cdots e_{x_n})-\tau\left((e_{x_1} \ot 1)\Delta(e_{x_2}\cdots e_{x_n})\right)\\
&=\rDelta(e_{x_1}e_{x_2}\cdots e_{x_n})-\tau\lDelta(e_{x_1}e_{x_2}\cdots e_{x_n}),
\end{align*}
as desired. 
\end{proof}

As usual, using Lemma~\ref{Bvscomplejo} one deduces from Proposition~\ref{P:dendri} a $+$-dendriform structure on the complex defining rack cohomology, and from Proposition~\ref{P:Zinbiel} a $+$-Zinbiel product on the rack cohomology. Lemma~\ref{L:dedndri} then yields dendriform and Zinbiel structures in positive degree:

\begin{thm}\label{thm:Zinbiel}
For a shelf $X$, the complex $(\oplus_{n\geqslant 1}C^n(X),\partial^*)$ admits a dendriform structure, which is Zinbiel up to a homotopy induced by~$\oh$. The rack cohomology of~$X$ thus receives a strictly Zinbiel product. 
\end{thm}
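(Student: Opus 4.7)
\medskip

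The plan is to dualize Propositions~\ref{P:dendriH} and~\ref{P:Zinbiel} from the chain complex $\oB(X)$ to the cochain complex, using the isomorphism of Proposition~\ref{Bvscomplejo2}, and then pass to cohomology.

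First, I would define on $\bigoplus_{n\geq 1} C^{n}(X)$ the convolution products
\[f\prec g := \mu_{\ZZ}(f\otimes g)\lDelta, \qquad f\succ g := \mu_{\ZZ}(f\otimes g)\rDelta,\]
where $\lDelta$ and $\rDelta$ are transported to the chain complex along Proposition~\ref{Bvscomplejo2}. The three codendriform axioms \eqref{E:dendri1}--\eqref{E:dendri3} dualize directly to the three dendriform axioms for $(\prec,\succ)$, and the compatibility relations \eqref{E:dendri4}--\eqref{E:dendri6} say precisely that $\partial^{*}$ is a graded derivation of both products; their sum is the cup product~$\smile$. This already yields the d.g.\ dendriform structure asserted in the first sentence of the theorem.

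Next, I would introduce
\[H(f\otimes g) := \mu_{\ZZ}(f\otimes g)\oh,\]
a degree $-1$ operator on $C^{\bullet}(X)^{\otimes 2}$. Dualizing the identity $(d\otimes\Id+\Id\otimes d)\oh+\oh d=\tau\lDelta-\rDelta$ from Proposition~\ref{P:Zinbiel}, in the same manner as in the proof of Theorem~\ref{cup-product}, yields a cochain identity of the form
\[(-1)^{|f||g|}\, g\prec f \;-\; f\succ g \;=\; \partial^{*}\!\circ H(f\otimes g) \;-\; H\!\circ\partial^{*}_{\otimes}(f\otimes g),\]
where $\partial^{*}_{\otimes}$ is the total codifferential on $C^{\bullet}(X)^{\otimes 2}$. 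Thus $H$ is the desired homotopy witnessing the Zinbiel relation $f\succ g=(-1)^{|f||g|}g\prec f$ up to coboundary, completing the first claim.

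Finally, on cohomology the coboundary term vanishes, so the strict identity $[f]\succ [g]=(-1)^{|f||g|}[g]\prec [f]$ holds in $H^{>0}_{\Rack}(X)$. Substituting this into the middle dendriform axiom $(a\prec b)\prec c = a\prec(b\prec c+b\succ c)$ produces $(a\prec b)\prec c=a\prec(b\prec c)+(-1)^{|b||c|}a\prec(c\prec b)$, which is precisely the graded Zinbiel axiom for the single operation $\underleftarrow{\smile}:=\prec$. Its Koszul symmetrization recovers the commutative cup product~$\smile$, proving the second assertion. The only substantive obstacle is the sign bookkeeping in the second paragraph: one must carefully track the Koszul factor introduced by $\tau$ when dualizing $\tau\lDelta$ into $(-1)^{|f||g|} g\prec f$. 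Everything else is formal transport of structures already assembled in the preceding propositions.
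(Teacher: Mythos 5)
Your proposal is correct and matches the paper's argument: the paper likewise obtains the dendriform structure by dualizing the $+$-codendriform structure of Proposition~\ref{P:dendriH} through the identification $(C_\bullet(X),\partial)\cong\oB$ of Proposition~\ref{Bvscomplejo2}, uses Lemma~\ref{L:dedndri} to truncate to positive degrees, and gets the Zinbiel relation in cohomology from the convolution homotopy built from $\oh$ exactly as in Theorem~\ref{cup-product}. (Only a cosmetic remark: the axiom you invoke at the end, $(a\prec b)\prec c=a\prec(b\prec c+b\succ c)$, is the first dendriform axiom rather than the middle one, but the substitution and conclusion are right.)
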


\begin{rem}
The dendriform structures above are not surprising: in~\cite{Lebed1,L16}, rack cohomology is interpreted in terms of quantum shuffle algebras, which are key examples of dendriform structures. The shuffle interpretation generalizes to the cohomology of solutions to the Yang--Baxter equation, where dendriform structures reappear as well. The Zinbiel structure in cohomology is on the contrary remarkable, and does not extend to the YBE setting. %, as we shall see in Section~\ref{S:eYBE}.
Shuffles also suggest that for $B(X)^+$, the codendriform structure and the associative product are compatible, in the sense of~\cite{Ronco}.  However, this does not seem to yield Zinbiel-coassociative structures on rack cohomologies: if we choose to work without coefficients (i.e. in $\oB(X)^+$), the dendriform structure is compatible with the differential, but the coproduct is lost; if we take coefficients $kM(X)$ (i.e. we work in $B(X)^+$), where $k$ is a field and $X$ is finite, the coproduct is preserved, but the dendriform structure does not survive in cohomology. 
 
\end{rem}

%%%%%%
\section{Quandle cohomology inside rack cohomology}\label{S:Q}
%%%%%%

If $X$ is a spindle (e.g., a quandle), then the complex $C_\bullet(X,k)$ has a \emph{degenerate subcomplex}
\[
 C_\bullet^{\Deg}(X,k)=\langle \ x^2\ :\ x \in X\ \rangle.
\]
In other words, it is the linear envelope of all monomials with repeating neighbours. J.S. Carter et al. \cite{QuandleHom} defined the \emph{quandle (co)homology} of $X$ via the complexes
\[
 C_\bullet^{\Quandle}(X,k):=C_\bullet(X,k)/C_\bullet^{\Deg}(X,k)\hbox{, and }
 C^\bullet_{\Quandle}(X,k):=\Hom(C^{\Quandle}_\bullet(X,k),k).
\]
R.A. Litherland and S.~Nelson \cite{LiNe} showed that in this case the complex $C_\bullet(X,k)$ splits:
\[C = C^{\Norm} \oplus C^{\Deg}.\] 
The quandle (co)homology is then the (co)homology of the complement $C^{\Norm}$. We will now show that this decomposition is already visible at the level of the d.g. bialgebra $B(X)$. Moreover, in the bialgebraic setting it will be particularly easy to prove that:
\begin{itemize}
\item the Zinbiel product on rack cohomology induces one on quandle cohomology, but does not restrict to quandle cohomology;
\item the associative cup product on rack cohomology does restrict to quandle cohomology.
\end{itemize}

\begin{prop}\label{P:Q1}
Let $X$ be a spindle. In $B(X)$, consider the ideal
\[B^{\Deg}(X):=\langle \ e_x^2 \ :\ x\in X\ \rangle,\]
and the left sub-$A(X)$-module $B^{\Norm}(X)$ generated by the elements $1$ and
\begin{equation}\label{E:complement}
(e_{x_1}-e_{x_2})(e_{x_2}-e_{x_3})\cdots(e_{x_{n-1}}-e_{x_n})e_{x_n}, \qquad \text{ where } n \geqslant1, \text{ and all } x_i \in X.
\end{equation}
Then $B$ decomposes as a d.g. $A$-bimodule:
\begin{equation}\label{E:decomposition}
B(X) = B^{\Norm}(X) \oplus B^{\Deg}(X).
\end{equation}
Moreover, $B^{\Deg}$ is a coideal, and $B^{\Norm}$ is a left coideal and a left codendriform coideal of $B$.
\end{prop}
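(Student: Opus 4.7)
The plan is to transport the classical Litherland--Nelson decomposition of the rack complex into the d.g.\ bialgebra $B(X)$ via the left $A(X)$-module isomorphism $B(X) \cong A(X) \otimes \ZZ\langle X\rangle$ of Lemma~\ref{Bvscomplejo}. Under this identification, $B^{\Deg}(X)$ corresponds to $A\otimes C^{\Deg}_\bullet$: any element $b_1 e_x^2 b_2$ can be brought into canonical form $a\cdot e_{y_1}\cdots e_x^2\cdots e_{y_n}$ using the relation $e_u y = y e_{u^y}$, which preserves the doubled-letter pattern. Similarly $B^{\Norm}(X)$ corresponds to $A\otimes W^{\Norm}_\bullet$, where $W^{\Norm}_n\subseteq C_n$ is the linear span of the expansions of $(x_1-x_2)(x_2-x_3)\cdots(x_{n-1}-x_n)x_n$. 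The direct sum then reduces to a chain-level splitting $C_\bullet = W^{\Norm}_\bullet \oplus C^{\Deg}_\bullet$, which follows by iterating $x_1\cdots x_n = (x_1-x_2)x_2\cdots x_n + x_2^2 x_3\cdots x_n$ for surjectivity and by noting that $W^{\Norm}_n$ projects isomorphically onto the canonical basis complement of $C^{\Deg}_n$ (monomials with consecutively distinct entries) for injectivity.

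The d.g.\ $A$-bimodule compatibility is routine: left $A$-stability is built in; right $A$-stability of $B^{\Norm}$ follows by pushing $y \in X$ leftwards through a normal generator $w_n$ via $e_u y = y e_{u^y}$, yielding $y\cdot w_n(x_1^y,\ldots,x_n^y) \in A\cdot B^{\Norm} \subseteq B^{\Norm}$; and $B^{\Deg}$ is a two-sided ideal by definition. Stability under $d$ reduces to checking that $W^{\Norm}_\bullet$ and $C^{\Deg}_\bullet$ are subcomplexes of $C_\bullet(X,M(X))$, for which the spindle axiom $x \t x = x$ yields the usual pairwise cancellation of adjacent $\partial$-summands at a repeated letter. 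For the coideal property of $B^{\Deg}$, a direct expansion gives
\begin{equation*}
\Delta(e_x^2) = e_x^2 \ot x^2 + 1\ot e_x^2 + e_x \ot (x e_x - e_x x),
\end{equation*}
with a Koszul sign on the cross term; the spindle relation forces $x e_x = e_x x$ (specialise $y = x$ in $y e_{x^y} = e_x y$), killing the middle term, and multiplicativity of $\Delta$ together with the two-sidedness of $B^{\Deg}$ propagate this to the whole ideal.

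The main obstacle is to show that $B^{\Norm}$ is a left coideal, or more strongly a left codendriform coideal. Here the plan is to exploit the splitting $\Delta = \lDelta + \rDelta$ from Proposition~\ref{P:dendri} and verify $\lDelta(B^{\Norm}), \rDelta(B^{\Norm}) \subseteq B\ot B^{\Norm}$ by induction on degree. Since $B^{\Norm}$ is a left $A$-module and $\Delta|_A$ is group-like, it suffices to work with the generators $w_n$. Factoring $w_n = (e_{x_1}-e_{x_2})w_{n-1}(x_2,\ldots,x_n)$ and grouping the monomial expansion of $w_n$ by its first letter yields the recursions
\begin{align*}
\lDelta(w_n) &= \bigl((e_{x_1}-e_{x_2})\ot x_1 + e_{x_2}\ot (x_1-x_2)\bigr)\,\Delta(w_{n-1}(x_2,\ldots,x_n)),\\
\rDelta(w_n) &= \bigl(1\ot (e_{x_1}-e_{x_2})\bigr)\,\Delta(w_{n-1}(x_2,\ldots,x_n)).
\end{align*}
For $\lDelta$, the second tensor slot of $\Delta(w_{n-1})$ is merely multiplied on the left by elements of $A$, so the inductive hypothesis and left $A$-stability of $B^{\Norm}$ close it immediately. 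The hard case is $\rDelta$, where the second slot is hit by $e_{x_1}-e_{x_2}$, which is not in $A$. The entire argument then pivots on a telescoping identity
\begin{equation*}
(e_{z_1}-e_{z_2})\cdot w_k(y_1,\ldots,y_k) = w_{k+1}(z_1,y_1,\ldots,y_k) - w_{k+1}(z_2,y_1,\ldots,y_k),
\end{equation*}
obtained by writing $e_{z_1}-e_{z_2} = (e_{z_1}-e_{y_1}) - (e_{z_2}-e_{y_1})$ and distributing. Combined with the commutation $(e_{z_1}-e_{z_2})a = a(e_{z_1^a}-e_{z_2^a})$ for $a \in A$, this shows that left multiplication by $e_{z_1}-e_{z_2}$ preserves $B^{\Norm}$, closes the $\rDelta$-induction, and simultaneously gives both the left coideal and the left codendriform coideal properties.
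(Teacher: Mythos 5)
Your overall route coincides with the paper's: the same generators $w_n=(e_{x_1}-e_{x_2})(e_{x_2}-e_{x_3})\cdots(e_{x_{n-1}}-e_{x_n})e_{x_n}$, the same direct-sum argument (the $w_n$ with adjacent-distinct entries are congruent to the nondegenerate monomials modulo $B^{\Deg}$ and vanish otherwise), the same computation killing the cross term in $\Delta(e_x^2)$ via $x\t x=x$, and, crucially, the same telescoping identity --- which is exactly the paper's Lemma~\ref{L:Q}, asserting that $B^{\Norm}$ is also generated over $A$ by products of \emph{arbitrary} differences $(e_{x_i}-e_{y_i})$ ending in some $e_{x_n}$. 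Your handling of the coideal and codendriform parts through the recursions for $\lDelta(w_n)$ and $\rDelta(w_n)$ is a correct, mildly reorganized version of the paper's argument, which instead expands $\Delta$ multiplicatively on the alternative generators and inspects the right tensor factors.

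There is, however, one genuine flaw: your justification that $B^{\Norm}$ is stable under $d$. You claim this ``reduces to checking that $W^{\Norm}_\bullet$ and $C^{\Deg}_\bullet$ are subcomplexes, for which the spindle axiom yields the usual pairwise cancellation of adjacent $\partial$-summands at a repeated letter.'' Pairwise cancellation at a repeated letter proves $\partial(C^{\Deg})\subseteq C^{\Deg}$ and nothing more; it cannot apply to $W^{\Norm}$, whose defining monomials have no repeated adjacent letters, and a linear complement of a subcomplex is not automatically a subcomplex --- this is precisely the nontrivial content of the Litherland--Nelson splitting. The correct argument is the Leibniz rule
\[
d(w_n) \;=\; (x_2-x_1)\,w_{n-1}(x_2,\ldots,x_n)\;-\;(e_{x_1}-e_{x_2})\,d\bigl(w_{n-1}(x_2,\ldots,x_n)\bigr),
\]
followed by induction on $n$: the first term lies in $A\cdot B^{\Norm}\subseteq B^{\Norm}$, and the second lies in $B^{\Norm}$ because $d(w_{n-1})\in B^{\Norm}$ by induction and because left multiplication by a difference $e_{z_1}-e_{z_2}$ (after commuting it past elements of $A$) preserves $B^{\Norm}$ --- which is exactly your telescoping identity. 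So the missing step is repaired by an ingredient you have already isolated for the $\rDelta$-induction; you need to invoke it here in place of the cancellation argument, which does not do the job.
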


\begin{proof} 
The expression~\eqref{E:complement} vanishes when $x_i=x_{i+1}$ for some $i$. Moreover, one has
\begin{align*}
e_{x_1}\cdots e_{x_n} = (e_{x_1}-e_{x_2})&(e_{x_2}-e_{x_3})\cdots(e_{x_{n-1}}-e_{x_n})e_{x_n} \ + \ \text{terms from } B^{\Deg}.
\end{align*}
This implies the decomposition~\eqref{E:decomposition} of abelian groups. 

The subspaces $B^{\Norm}$ and $B^{\Deg}$ are homogeneous, and for any $y \in X$ one has 
\[(e_{x_1}-e_{x_2})\cdots(e_{x_{n-1}}-e_{x_n})e_{x_n} y = 
y (e_{x_1^y}-e_{x_2^y})\cdots(e_{x_{n-1}^y}-e_{x_n^y})e_{x_n^y}.\]
So, $B^{\Norm}$ and $B^{\Deg}$ are graded sub-$A$-bimodules of~$B$. 

Let us now check that $B^{\Deg}$ is a differential coideal. Indeed, using the property $x^x=x$ of a spindle, one computes
\begin{align}
d(e_x^2)&=d(e_x)e_x-e_xd(e_x)=(1-x)e_x-e_x(1-x)=e_x-xe_x-e_x+xe_{x^x}=0,\label{E:dBD}\\
\Delta(e_x^2)
&=e_x^2\ot x^2+1\ot e_x^2+
e_x\ot xe_x
-e_x\ot xe_{x^x}
=e_x^2\ot x^2+1\ot e_x^2.\label{E:deltaBD}
\end{align}

To check that $B^{\Norm}$ is a subcomplex of~$B$, we need its alternative description:

\begin{lem}\label{L:Q}
 $B^{\Norm}(X)$ is the left sub-$A(X)$-module generated by the elements $1$ and
\begin{equation}\label{E:complement2}
(e_{x_1}-e_{y_1})(e_{x_2}-e_{y_2})\cdots(e_{x_{n-1}}-e_{y_{n-1}})e_{x_n}, \qquad \text{ where } n \geqslant1, \text{ and all } x_i, y_i \in X.
\end{equation}
\end{lem}

\begin{proof}
It is sufficient to represent an element of the form~\eqref{E:complement2} as a linear combination of elements of the form~\eqref{E:complement}. This can be done inductively using the following observation:
\begin{align*}
(e_x-e_y)&(e_{x_1}-e_{x_2})(e_{x_2}-e_{x_3})\cdots(e_{x_{n-1}}-e_{x_n})e_{x_n} =\\
&(e_x-e_{x_1})(e_{x_1}-e_{x_2})(e_{x_2}-e_{x_3})\cdots(e_{x_{n-1}}-e_{x_n})e_{x_n}\\
-&(e_y-e_{x_1})(e_{x_1}-e_{x_2})(e_{x_2}-e_{x_3})\cdots(e_{x_{n-1}}-e_{x_n})e_{x_n}. \qedhere
\end{align*}
\end{proof}  

Now, for $a \in A$ and $x_1,\ldots,x_n \in X$, we have
\begin{align*}
d(a(e_{x_1}&-e_{x_2})(e_{x_2}-e_{x_3})\cdots(e_{x_{n-1}}-e_{x_n})e_{x_n})\\
=&ad(e_{x_1}-e_{x_2})(e_{x_2}-e_{x_3})\cdots(e_{x_{n-1}}-e_{x_n})e_{x_n}\\
&-a(e_{x_1}-e_{x_2})d\left( (e_{x_2}-e_{x_3})\cdots(e_{x_{n-1}}-e_{x_n})e_{x_n} \right) \\
=&a(x_2-x_1)(e_{x_2}-e_{x_3})\cdots(e_{x_{n-1}}-e_{x_n})e_{x_n}\\
&-a(e_{x_1}-e_{x_2})d\left( (e_{x_2}-e_{x_3})\cdots(e_{x_{n-1}}-e_{x_n})e_{x_n} \right).
\end{align*}
An inductive argument using the lemma shows that this lies in $B^{\Norm}$.

It remains to prove that $\Delta$, $\lDelta$ and $\rDelta$ send $B^{\Norm}$ to $B \otimes B^{\Norm}$. In degree $0$ everything is clear. In higher degree, from
\[\Delta(e_x-e_y)=e_x\otimes x - e_y \otimes y + 1 \otimes (e_x-e_y),\]
one sees that any of $\Delta$, $\lDelta$ and $\rDelta$ sends an expression of the form \eqref{E:complement2} to a linear combination of tensor products, where on the right one has a product of terms of the form $z$, $e_x-e_y$, and possibly an $e_u$ at the end. By Lemma~\ref{L:Q}, all these right parts lie in $B^{\Norm}$.
\end{proof}

The proposition describes all the structure inherited from $B$ by $B^{\Deg}$ and $B^{\Norm}$. Indeed,
\begin{itemize}
\item $B^{\Deg}$ is not a sub-coalgebra, as follows from~\eqref{E:deltaBD};
\item $B^{\Deg}$ is not a coideal in the dendriform sense, since
\begin{equation}\label{E:DendriExample}
\lDelta(e_x^2)=e_x^2\ot x^2+e_x\ot xe_x;
\end{equation} 
\item $B^{\Norm}$ is not a subalgebra of $B$: for any $x \in X$, $e_x$ lies in $B^{\Norm}$, whereas $e_x^2 \in B^{\Deg}$;
\item $B^{\Norm}$ is not a sub-coalgebra either: one has
\begin{equation}\label{E:DeltaBN}
\Delta \left((e_x-e_y)e_y \right)= e_y^2 \ot (x-y)y + \text{ terms from } B^{\Norm} \ot B^{\Norm},
\end{equation}
and $e_y^2 \ot (x-y)y$ is a non-zero term from $B^{\Deg} \ot B^{\Norm}$ in general.
\end{itemize}
In particular, there is no natural way to define a codendriform structure on $B^{\Norm}$. Passing to the quotient $\oB$ does not solve this problem: $\oB^{\Deg}$ is still not a codendriform coideal because of \eqref{E:DendriExample}, and $\oB^{\Norm}$ is not a sub-coalgebra of $\oB$. Indeed, even if \eqref{E:DeltaBN} implies $\Delta \left((e_x-e_y)e_y \right) \in \oB^{\Norm} \ot \oB^{\Norm}$, things go wrong in degree~$3$:
\begin{align*}
\Delta \left((e_x-e_y)(e_y-e_z)e_z \right)= e_z^2 \ot (e_{X^Y}-e_{X^z}-e_{Y}+e_{Y^z})+ \text{ terms from } \oB^{\Norm} \ot \oB^{\Norm},
\end{align*}
where $X=x^z$, $Y=y^z$. One gets a term from $\oB^{\Deg} \ot \oB^{\Norm}$ which does not vanish in general. However, since $ e_{X^Y} = e_X=e_{X^z}$ and $e_{Y^z}=e_{Y}$ modulo the boundary, this terms disappears in homology. More generally: 
é
\begin{prop}\label{P:Q}
Let $X$ be a spindle. The homology $\oH(X)$ of $\oB(X)$ decomposes as a graded abelian group:
\begin{equation}\label{E:decomposition_hom}
\oH(X) = \oH^{\Norm}(X) \oplus \oH^{\Deg}(X).
\end{equation}
If $k$ is a field, then one obtains a decomposition
\begin{equation}\label{E:decomposition_hom_field}
\oH(X,k) = \oH^{\Norm}(X,k) \oplus \oH^{\Deg}(X,k),
\end{equation}
with $\oH^{\Deg}$ a coassociative coideal and $\oH^{\Norm}$ a coZinbiel (and hence coassociative) coalgebra.

Dually, the cohomology $\oH^{\bullet}(X)$ of $\oB(X)^*$ decomposes as 
\begin{equation}\label{E:decomposition_cohom}
\oH^{\bullet}(X) = \oH^{\bullet}_{\Norm}(X) \oplus \oH^{\bullet}_{\Deg}(X),
\end{equation}
with $\oH^{\bullet}_{\Deg}$ a Zinbiel (and hence associative) ideal, and $\oH^*_{\Norm}$ an associative subalgebra of $\oH^{\bullet}$. The same holds for $\oH^{\bullet}(X,k)$.
\end{prop}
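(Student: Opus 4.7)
\emph{Proof plan.} The proof splits into three stages.

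\emph{Stage 1 (Graded decompositions).} Proposition~\ref{P:Q1} furnishes $B(X) = B^{\Norm}(X) \oplus B^{\Deg}(X)$ as d.g. $A(X)$-bimodules. Since both summands are sub-$A$-bimodules, they descend through the trivialisation $\oB(X) = {\ZZ}\otimes_A B(X)$ of Lemma~\ref{L:Bquotient} to give $\oB(X) = \oB^{\Norm}(X) \oplus \oB^{\Deg}(X)$ as d.g. abelian groups, and analogously over any commutative ring~$k$. Taking (co)homology and dualising produces \eqref{E:decomposition_hom}, \eqref{E:decomposition_hom_field}, and \eqref{E:decomposition_cohom} at the level of graded vector spaces.

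\emph{Stage 2 (Cohomology structure).} Identify $\oH^{\bullet}_{\Norm}$ with classes of cocycles in the annihilator $(\oB^{\Deg})^{\perp}\subseteq \oB^*$, and $\oH^{\bullet}_{\Deg}$ with those in $(\oB^{\Norm})^{\perp}$. For $F,G\in(\oB^{\Deg})^{\perp}$ and $b\in\oB^{\Deg}$, the coideal property from Proposition~\ref{P:Q1} places $\Delta(b)\in \oB\otimes\oB^{\Deg}+\oB^{\Deg}\otimes\oB$, so each term of $(F\otimes G)\Delta(b)$ is killed by either $F$ or $G$; hence $\oH^{\bullet}_{\Norm}$ is an associative subalgebra of $\oH^\bullet$. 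For $F\in\oB^*$, $G\in(\oB^{\Norm})^{\perp}$, $b\in\oB^{\Norm}$, the left (codendriform) coideal properties $\Delta(\oB^{\Norm}),\lDelta(\oB^{\Norm})\subseteq \oB\otimes\oB^{\Norm}$ put the right-hand tensor factor in $\oB^{\Norm}$, where it is killed by~$G$; hence $\oH^{\bullet}_{\Deg}$ is a \emph{left} associative ideal and a \emph{left} Zinbiel ideal of $\oH^\bullet$. The super-commutativity of $\smile$ on $\oH^\bullet$ (Theorem~\ref{cup-product}) upgrades the first to a two-sided associative ideal. For the remaining right Zinbiel ideal property, apply the graded Zinbiel--associative relation in positive degree
\[
a \smile b \;=\; a \underleftarrow{\smile} b \;+\; (-1)^{|a||b|}\, b \underleftarrow{\smile} a
\]
with $a\in\oH^\bullet_{\Deg}$ and $b\in\oH^\bullet$: both $a\smile b$ (two-sided associative ideal) and $b\underleftarrow{\smile} a$ (left Zinbiel ideal) lie in $\oH^\bullet_{\Deg}$, so does $a\underleftarrow{\smile} b$. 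Thus $\oH^\bullet_{\Deg}$ is a two-sided Zinbiel ideal.

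\emph{Stage 3 (Homology over a field).} Over a field $k$, the Künneth isomorphism $H(\oB\otimes\oB,k)\cong\oH(X,k)^{\otimes 2}$ matches the $2\times 2$ decomposition induced by $\oB^{\Norm}\oplus\oB^{\Deg}$ on both sides. Dualising Stage~2 then delivers $\oH^{\Deg}(X,k)$ as a coassociative coideal of $\oH(X,k)$ (dual to $\oH^\bullet_{\Norm}$ being a subalgebra) and $\oH^{\Norm}(X,k)$ as a coZinbiel sub-coalgebra of $\oH(X,k)$ (dual to $\oH^\bullet_{\Deg}$ being a Zinbiel ideal), the ambient coZinbiel structure on $\oH(X,k)$ in positive degree being that obtained from Proposition~\ref{P:Zinbiel} via Künneth. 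The principal obstacle throughout is the right Zinbiel ideal property for $\oH^\bullet_{\Deg}$: it is invisible at the chain level, because $\oB^{\Norm}$ fails to be a right coideal, and its proof essentially requires combining the non-trivial super-commutativity of $\smile$ (Theorem~\ref{cup-product}) with the Zinbiel--associative symmetrization. Everything else is formal manipulation from Proposition~\ref{P:Q1} and standard Künneth.
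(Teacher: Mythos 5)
Your proposal is correct in substance and follows the same strategy as the paper's proof, only in dual form: the paper works on the homology side, observing that $\lDelta((\oH^{\Norm})^+)\subseteq\oH\ot\oH^{\Norm}$ and $\rDelta((\oH^{\Norm})^+)\subseteq\oH\ot(\oH^{\Norm})^+$ (the left codendriform coideal property from Proposition~\ref{P:Q1}) and then using $\rDelta=\tau\lDelta$ in homology (Proposition~\ref{P:Zinbiel}) to force the components in $(\oH^{\Deg})^+\ot\oH^{\Norm}$ and $\oH^{\Deg}\ot(\oH^{\Norm})^+$ to vanish, so that $\oH^{\Norm}$ is a genuine coZinbiel sub-coalgebra. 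Your symmetrization identity $a\smile b=a\underleftarrow{\smile}b+(-1)^{|a||b|}b\underleftarrow{\smile}a$ combined with the one-sided ideal property is exactly the dual of this component-vanishing argument, and your diagnosis that the right Zinbiel ideal property is invisible at the chain level matches the paper's discussion following Proposition~\ref{P:Q1}. The one genuine (though repairable) flaw is Stage~3: you propose to obtain the homology statements by \emph{dualising} Stage~2, but for infinite $X$ the homology $\oH(X,k)$ is not the dual of the cohomology $\oH^{\bullet}(X,k)$ --- the duality only goes the other way, since $C^n(X,k)=k^{X^n}$ is strictly larger than the dual one would need. The paper avoids this by running the argument directly on homology (where the coproduct, $\lDelta$ and $\rDelta$ descend over a field by Propositions~\ref{prop:dgb_hom_bialg} and~\ref{P:dendriH}) and declaring the cohomology case analogous; your argument transposes verbatim to that setting, so the fix costs nothing, but as written the direction of dualisation is wrong.
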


\begin{proof}
Proposition~\ref{P:Q1} yields the desired decompositions, and, together with Propositions~\ref{prop:dgb_hom_bialg} and~\ref{P:dendriH}, shows that $\oH^{\Deg}$ is a coideal and $\oH^{\Norm}$ a left codendriform coideal. In particular, 
\begin{align*}
\lDelta((\oH^{\Norm})^+) &\subseteq (\oH^{\Norm})^+ \ot \oH^{\Norm} \oplus (\oH^{\Deg})^+ \ot \oH^{\Norm},\qquad\text{ and }\\
\rDelta((\oH^{\Norm})^+) &\subseteq \oH^{\Norm} \ot (\oH^{\Norm})^+ \oplus \oH^{\Deg} \ot (\oH^{\Norm})^+.
\end{align*}
But Proposition~\ref{P:Zinbiel} yields the relation $\rDelta =\tau \lDelta$ in homology, hence the terms in $(\oH^{\Deg})^+ \ot \oH^{\Norm}$ and $\oH^{\Deg} \ot (\oH^{\Norm})^+$ above must be trivial. This shows that $\oH^{\Norm}$ is in fact a coZinbiel coalgebra. 

The proof for the cohomology $\oH^{\bullet}$ is analogous.
\end{proof}

Again, this proposition describes all the structure inherited by $\oH^{\Deg}(X,k)$: it is neither a sub-coalgebra, nor a codendriform coideal. Indeed, computations \eqref{E:deltaBD} and \eqref{E:DendriExample} still yield counterexamples, since $e_x^2$ and $e_x$ represent non-trivial classes in $\oH^{\Deg}$ and $\oH^{\Norm}$ respectively.

Now, in order to understand what our proposition means for quandle cohomology, we need to recall Lemma~\ref{Bvscomplejo} and observe that the construction of $B^{\Deg}$ precisely repeats that of the degenerate complex. This yields: 
\begin{lem}\label{BvscomplejoQ}
For any spindle $X$, one has isomorphisms of complexes
\[(C^{\Quandle}_\bullet,\partial)\cong \oB^{\Norm} \hbox{,\; and \; }(C^\bullet_{\Quandle},\partial^*)\cong \oB^*_{\Norm}.\]
\end{lem}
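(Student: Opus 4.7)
The plan is to transfer the decomposition $\oB(X) = \oB^{\Norm}(X) \oplus \oB^{\Deg}(X)$ of Proposition~\ref{P:Q1} through the chain isomorphism $\oB(X) \cong (C_\bullet(X),\partial)$ of Proposition~\ref{Bvscomplejo2}, and to identify the image of $\oB^{\Deg}(X)$ with the classical degenerate subcomplex $C^{\Deg}_\bullet(X)$.

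First, I would recall that Lemma~\ref{Bvscomplejo} gives a bijection $B(X) \cong A(X)\ot \ZZ\langle X\rangle$ sending $a e_{x_1}\cdots e_{x_n}$ to $a\ot x_1\cdots x_n$, which becomes $\oB(X) \cong \ZZ\langle X\rangle = C_\bullet(X)$ after quotienting by the left $A$-action; by Proposition~\ref{Bvscomplejo2} this is an isomorphism of complexes.

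The core step is to check that, under the isomorphism $B(X)\cong A(X)\ot \ZZ\langle X\rangle$, the ideal $B^{\Deg}(X)$ corresponds exactly to $A(X)\ot C^{\Deg}_\bullet(X)$, where $C^{\Deg}_\bullet(X)$ is the span of tuples with a repeated consecutive letter. The inclusion of $A(X)\ot C^{\Deg}_\bullet(X)$ into $B^{\Deg}(X)$ is immediate: any basis element $a e_{x_1}\cdots e_{x_i}e_{x_i}\cdots e_{x_n}$ is manifestly of the form $b_1 e_{x_i}^2 b_2$. For the reverse inclusion, I would put an arbitrary generator $b_1 e_x^2 b_2$ into canonical form by pushing all elements of $X$ to the left using the relation $e_y\cdot z = z\cdot e_{y^z}$ (equivalently $ye_{x^y}=e_xy$ from the definition of $B$). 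A short induction, analogous to the computation $e_x^2 \cdot y = e_x(ye_{x^y}) = y e_{x^y}^2$, shows that this process preserves the property of having two contiguous identical $e$-factors; hence the canonical form of $b_1 e_x^2 b_2$ lies in $A(X)\ot C^{\Deg}_\bullet(X)$. Passing to $\oB(X) = \ZZ\ot_A B(X)$ then gives $\oB^{\Deg}(X) \cong C^{\Deg}_\bullet(X)$ as subcomplexes of $C_\bullet(X)$.

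Combined with the decomposition $\oB(X) = \oB^{\Norm}(X)\oplus \oB^{\Deg}(X)$ of d.g. modules, the composition
\[
\oB^{\Norm}(X)\hookrightarrow \oB(X)\twoheadrightarrow \oB(X)/\oB^{\Deg}(X)\cong C_\bullet(X)/C^{\Deg}_\bullet(X)= C^{\Quandle}_\bullet(X)
\]
is then an isomorphism of complexes, yielding the first assertion. For the cochain version, I would simply take the $\ZZ$-linear dual: the grading-wise finite generation of the complexes involved ensures that $\oB^*_{\Norm}(X)$ is identified with $\Hom(C^{\Quandle}_\bullet(X),\ZZ)=C^\bullet_{\Quandle}(X)$, with compatible differentials. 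The only real obstacle is the canonical-form bookkeeping in the middle paragraph, where one must be careful that the commutation relations moving $X$-letters to the left preserve the repeated-factor pattern $e_x^2$ (with the index possibly conjugated).
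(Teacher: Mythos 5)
Your argument is correct and follows the same route the paper takes: the paper simply observes that the construction of $B^{\Deg}$ repeats that of the degenerate subcomplex and invokes Lemma~\ref{Bvscomplejo}, which is exactly the canonical-form identification you spell out (including the key computation $e_x^2\,y = y\,e_{x^y}^2$ showing repeated factors are preserved). Your extra detail is a fleshed-out version of what the paper leaves implicit; the only cosmetic point is that the dualization step needs no finiteness hypothesis, since $\Hom(-,\ZZ)$ turns the finite direct sum decomposition into one degreewise.
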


Proposition~\ref{P:Q} then translates as follows:

\begin{thm}\label{thm:Q}
The rack cohomology of a spindle $X$ decomposes into quandle and degenerate parts: one has the isomorphism
\[H_{\Rack}(X) \simeq H_{\Quandle}(X) \oplus H_{\Deg}(X)\]
of graded abelian groups. Moreover,
\begin{itemize}
\item $H_{\Quandle}$ is an associative subalgebra of $H_{\Rack}$, and $H_{\Deg}$ is an associative ideal;
\item $H_{\Deg}$ is a Zinbiel ideal, hence $H_{\Quandle}$ carries an induced Zinbiel product.
\end{itemize}
\end{thm}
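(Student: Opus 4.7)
The plan is to translate Proposition~\ref{P:Q} into the classical rack/quandle cohomology setting, using the two dictionary results already in place: Proposition~\ref{Bvscomplejo2}, which identifies $(C^\bullet(X),\partial^\ast)$ with $\oB(X)^\ast$, and Lemma~\ref{BvscomplejoQ}, which identifies $(C^\bullet_{\Quandle},\partial^\ast)$ with $\oB(X)^\ast_{\Norm}$.

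First I would transport the splitting. Proposition~\ref{P:Q1}, passed to the quotient $\oB(X)$, gives a decomposition
\[\oB(X)=\oB^{\Norm}(X)\oplus\oB^{\Deg}(X)\]
of differential graded left $A(X)$-modules. Dualizing yields a direct sum decomposition of the rack cochain complex as $\oB(X)^\ast\cong\oB^{\Norm}(X)^\ast\oplus\oB^{\Deg}(X)^\ast$. By Lemma~\ref{BvscomplejoQ} the first factor is the quandle cochain complex; the second is, by construction of $\oB^{\Deg}$ as the span of monomials with consecutive repetitions, the annihilator of the non-degenerate part, i.e.\ the degenerate cochain subcomplex. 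Passing to cohomology produces the abelian-group decomposition $H_{\Rack}(X)\simeq H_{\Quandle}(X)\oplus H_{\Deg}(X)$.

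Next I would transport the multiplicative and Zinbiel structures. The cohomology half of Proposition~\ref{P:Q} asserts that $\oH^\bullet_{\Norm}$ is an associative subalgebra of $\oH^\bullet$, $\oH^\bullet_{\Deg}$ is a two-sided associative ideal, and $\oH^\bullet_{\Deg}$ is moreover a Zinbiel ideal. Under the identifications from the first step, these statements read exactly as: $H_{\Quandle}$ is an associative subalgebra of $H_{\Rack}$, $H_{\Deg}$ is an associative ideal, and $H_{\Deg}$ is a Zinbiel ideal. The induced Zinbiel product on $H_{\Quandle}$ is then obtained by observing that, since $H_{\Deg}$ is a Zinbiel ideal, $\underleftarrow{\smile}$ descends to the quotient $H_{\Rack}/H_{\Deg}$, which by the splitting above is canonically isomorphic to $H_{\Quandle}$.

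I do not expect a real obstacle: the entire content of the theorem has been prepared by Propositions~\ref{P:Q1} and~\ref{P:Q}, and only needs to be translated through the established dictionaries. The one point that warrants verification is that the decomposition of~$\oB$ is genuinely compatible with these dictionaries, that is, that the monomials of the form~\eqref{E:complement} in $\oB^{\Norm}$ really go to the non-degenerate cochains and that $\oB^{\Deg}$ corresponds to the annihilator of the non-degenerate part. This follows from the explicit form of the isomorphism in Lemma~\ref{Bvscomplejo} together with the defining property $x^x=x$ in a spindle, which makes $\oB^{\Deg}$ precisely the image of the degenerate subcomplex.
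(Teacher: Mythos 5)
Your proposal is correct and follows essentially the same route as the paper: the authors likewise obtain Theorem~\ref{thm:Q} by translating Proposition~\ref{P:Q} through the identifications of Proposition~\ref{Bvscomplejo2} and Lemma~\ref{BvscomplejoQ}, with the induced Zinbiel product on $H_{\Quandle}$ coming from the quotient by the Zinbiel ideal $H_{\Deg}$. Your extra verification that $\oB^{\Deg}$ matches the degenerate subcomplex is exactly the observation the paper makes just before stating Lemma~\ref{BvscomplejoQ}.
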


The situation is rather subtle here. The Zinbiel product on rack cohomology does not restrict to the quandle cohomology; to get a Zinbiel product on $H_{\Quandle}$, we need to consider it as a quotient of $H_{\Rack}$. However, the associative product induced by the Zinbiel product does restrict to $H_{\Quandle}$.

\section{Quandle cohomology vs. rack cohomology}\label{S:Q2}
%%%%%%

The rack cohomology of spindles and quandles shares a lot with the Hochschild cohomology of monoids and groups. This analogy suggests that the degenerate subcomplex $C^{\Deg}$ can be ignored, and that the rack cohomology $H_{\Rack}$ and the quandle cohomology $H_{\Quandle}$ carry the same information about a spindle. R.A. Litherland and S.~Nelson \cite{LiNe} showed this is not as straightforward as that: the degenerate part is highly non-trivial, and in particular contains the entire quandle part: 
\[C_\bullet^{\Deg} \simeq C_{\bullet-1}^{\Quandle} \oplus C_\bullet^{\Late}\ \text{ for } \bullet \geqslant 2.\]
Here $C_\bullet^{\Late}(X) := \ZZ X \otimes C_{\bullet-1}^{\Deg}(X)$ is the \emph{late degenerate subcomplex}, which is the linear envelope of all monomials with repetition at some place other than the beginning. This refines the rack cohomology splitting from Theorem~\ref{thm:Q}:
\begin{equation}\label{E:CohomDecompLate}
H^\bullet_{\Rack} \simeq H^\bullet_{\Quandle} \oplus H^{\bullet-1}_{\Quandle} \oplus H^\bullet_{\Late}\ \text{ for } \bullet \geqslant 2.
\end{equation}
We will now recover this decomposition in our bialgebraic setting. However, our methods are not sufficient for coupling this decomposition with the algebraic structure on $H_{\Rack}$:
\begin{question}\label{Qu:CohomDecompLate}
Do the cup product and the Zinbiel product on the rack cohomology of a spindle respect the decomposition~\eqref{E:CohomDecompLate} in any sense? In particular, can the quandle cohomology regarded as a \emph{Zinbiel algebra} be reconstructed from the degenerate cohomology?
\end{question}
Now, even though $H_{\Deg}$ is big, it is degenerate in a certain sense. Indeed,  J.~Przytycki and K.~Putyra \cite{PrzPutyra} showed the quandle cohomology $H_{\Quandle}$ of a spindle to completely determine its rack cohomology $H_{\Rack}$, and hence $H_{\Deg}$, on the level of abelian groups. In light of the preceding section, the following question becomes particularly interesting:

\begin{question}\label{Qu:ZinbielDegenerate}
Can Zinbiel and associative structures on the rack cohomology of a spindle be recovered from the corresponding structures on its quandle cohomology? 
\end{question}

\medskip

Let us now return to our d.g. bialgebra $B(X)$. 

\begin{prop}\label{P:late}
Let $X$ be a spindle. Put 
\[B^{\Late}(X):=B^+(X) \otimes_{A(X)} B^{\Deg}(X),\]
where $B^+$ is the positive degree part of $B$. One has the following isomorphism of graded $A(X)$-bimodules:
\begin{equation}\label{E:QuInsideDeg}
B_{\bullet}^{\Deg}(X) \simeq B_{\bullet}^{\Late}(X) \oplus B_{\bullet-1}^{\Quandle}(X)\ \text{ for } \bullet \geqslant 2.
\end{equation}
\end{prop}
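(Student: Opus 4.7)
The plan is to mimic the classical Litherland--Nelson splitting in the bialgebraic, $A$-bimodule setting, by constructing an explicit ``first-letter duplication'' map and showing that it provides the desired complement to $B^{\Late}_n$ inside $B^{\Deg}_n$. First I would describe $B^{\Late}_n$ concretely: since every canonical monomial factors as $ae_{y_1}\cdot e_{y_2}\cdots e_{y_m}$, one has $B_m=B_1\cdot B_{m-1}$ for $m\geq 1$, so the image of the multiplication $B^{+}\otimes_A B^{\Deg}\to B$ coincides with $B_1\cdot B^{\Deg}$. Through Lemma~\ref{Bvscomplejo}, this image identifies with the sub-$A$-bimodule of $B^{\Deg}_n$ spanned by canonical monomials $ae_{y_1}\cdots e_{y_n}$ whose tail $e_{y_2}\cdots e_{y_n}$ is already degenerate (i.e., $y_j=y_{j+1}$ for some $j\geq 2$).

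Next I would define
\[
\psi\colon B_{n-1}(X)\longrightarrow B_n(X),\qquad ae_{y_1}\cdots e_{y_{n-1}}\longmapsto a\,e_{y_1}^{2}\,e_{y_2}\cdots e_{y_{n-1}},
\]
extended left-$A$-linearly on the canonical basis from Lemma~\ref{Bvscomplejo}. Right-$A$-linearity is immediate from $e_y z=z e_{y^z}$ (which, applied twice, also gives $e_y^{2}z=z e_{y^z}^{2}$), so $\psi$ is an $A$-bimodule morphism. By construction $\psi(B_{n-1})\subseteq B^{\Deg}_n$ (every image contains the factor $e_{y_1}^{2}$); and if the input $ae_{y_1}\cdots e_{y_{n-1}}$ already has a repetition $y_i=y_{i+1}$, this gets shifted to position $\geq 2$ in the output, yielding $\psi(B^{\Deg}_{n-1})\subseteq B^{\Late}_n$. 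Combined with the splitting $B_{n-1}=B^{\Norm}_{n-1}\oplus B^{\Deg}_{n-1}$ from Proposition~\ref{P:Q1}, and identifying $B^{\Quandle}_{n-1}$ with $B^{\Norm}_{n-1}$ (consistent with Lemma~\ref{BvscomplejoQ}), $\psi$ restricts to an injective $A$-bimodule map $B^{\Quandle}_{n-1}\hookrightarrow B^{\Deg}_n$.

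To conclude the direct sum decomposition, I would transport everything through Lemma~\ref{Bvscomplejo} to $A\otimes\ZZ\langle X\rangle$ and reduce the claim to the combinatorial fact in $\ZZ\langle X\rangle$: each degenerate monomial $x_1\cdots x_n$ either has some $x_i=x_{i+1}$ with $i\geq 2$ (lying in $C^{\Late}_n$), or its sole repetition is $x_1=x_2$, in which case $x_1x_3\cdots x_n$ is non-degenerate, defines a unique class in $C^{\Quandle}_{n-1}$, and the monomial equals the image under the classical duplication $\psi_0(x_1\cdots x_{n-1}):=x_1x_1x_2\cdots x_{n-1}$ of a $W_{n-1}$-representative of this class. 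The two cases being disjoint gives $C^{\Deg}_n=\psi_0(W_{n-1})\oplus C^{\Late}_n$, and applying $A\otimes -$ together with the already-verified right-$A$-equivariance delivers the stated isomorphism.

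The main obstacle is bookkeeping rather than conceptual content: one must reconcile the paper's tensor-product notation $B^{+}\otimes_A B^{\Deg}$ with the subspace $B_1\cdot B^{\Deg}\subseteq B^{\Deg}$ that the argument naturally produces, and verify that the classical Litherland--Nelson splitting is equivariant under the twisted right $A$-action $x\mapsto x^z$. The latter hinges on the compatibility of first-letter duplication with self-distributivity, i.e., on the fact that the rack action intertwines with the insertion of the squared factor $e_{y_1}^{2}$.
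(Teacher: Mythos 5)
Your proof is correct and follows essentially the same route as the paper: the map $\psi$ is exactly the paper's first-letter duplication map $s$ (Lemma~\ref{L:late}), checked to be well defined and $A$-bilinear via $e_xe_xy=e_xye_{x^y}=ye_{x^y}e_{x^y}$, and the splitting is obtained in the same way by observing that $s(B^{\Deg})\subseteq B^{\Late}$ and combining with $B=B^{\Norm}\oplus B^{\Deg}$ from Proposition~\ref{P:Q1}, with the verification reduced to canonical forms. The reconciliation of $B^+\otimes_A B^{\Deg}$ with the subspace $B^+\cdot B^{\Deg}$ that you flag is handled identically in the paper, via the multiplication map and canonical forms.
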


This results immediately from the following technical lemma:

\begin{lem}\label{L:late}
Let $X$ be a spindle. Define a map $s \colon B^+(X) \to B^{\Deg}(X)$ as follows: take any element from $B^+$ written using the generators of the form $x$ and $e_y$, and in each of its monomials replace the first letter of the form $e_y$ by $e_ye_y$. Then:
\begin{itemize}
\item $s$ is a well-defined injective $A$-bilinear map of degree $1$;
\item one has the following decomposition of graded $A$-bimodules:
\[B^{\Deg} = B^{\Late} \oplus s((B^{\Norm})^+).\]
\end{itemize}
\end{lem}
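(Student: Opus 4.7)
The plan is to leverage the canonical form isomorphism $B(X) \cong A(X) \otimes \ZZ\langle X\rangle$ from Lemma~\ref{Bvscomplejo}, which writes every element of $B$ uniquely as a $\ZZ$-linear combination of canonical monomials $a e_{x_1}\cdots e_{x_n}$. Defining $s$ by $s(a e_{x_1}\cdots e_{x_n}) = a e_{x_1}^2 e_{x_2}\cdots e_{x_n}$ on a canonical monomial with $n \geq 1$ and extending linearly makes well-definedness, injectivity, degree one, and left $A$-linearity immediate; right $A$-linearity reduces to iterating the relation $e_x y = y e_{x^y}$, and the same computation checks that the informal description ``replace the first letter of the form $e_y$ by $e_y^2$'' given in the statement agrees with the canonical-form version regardless of the chosen representative.

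For the direct sum decomposition, I would first describe the relevant subspaces of $B$ as $\ZZ$-spans of canonical monomials. Pushing the $A$-factors of right multiplicands leftward via $e_x y = y e_{x^y}$ shows that $B^{\Deg}$ is precisely the span of \emph{degenerate} canonical monomials $a e_{x_1}\cdots e_{x_n}$ having some $x_i = x_{i+1}$; an analogous argument shows that $B^{\Late}$, viewed in $B$ via multiplication, is spanned by canonical monomials whose earliest consecutive coincidence appears at some position $i \geq 2$. Hence, writing $V$ for the $\ZZ$-span of canonical monomials with $x_1 = x_2$ and no further consecutive coincidence, one obtains the basis-wise decomposition $B^{\Deg} = B^{\Late} \oplus V$.

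The heart of the argument is then to identify $s((B^{\Norm})^+)$ with $V$ modulo $B^{\Late}$. Let $V_{\Norm}$ denote the $\ZZ$-span of non-degenerate canonical monomials, so that $B = V_{\Norm} \oplus B^{\Deg}$ is the basis-wise complement. By direct inspection, $s$ carries $V_{\Norm}^+$ bijectively onto $V$ (one just prepends a duplicate of the leading $e$) and sends $B^{\Deg}$ into $B^{\Late}$ (any pre-existing consecutive coincidence is shifted past position~$1$). By Proposition~\ref{P:Q1}, $B^{\Norm}$ is another complement of $B^{\Deg}$ in $B$, so the projection $\pi \colon B^{\Norm} \to V_{\Norm}$ along $B^{\Deg}$ is a $\ZZ$-linear isomorphism. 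For any $c \in (B^{\Norm})^+$, writing $c = \pi(c) + (c - \pi(c))$ with the second term in $B^{\Deg}$ yields $s(c) = s(\pi(c)) + (\text{element of } B^{\Late})$; combined with bijectivity of $s|_{V_{\Norm}^+}$, this shows that the composition $(B^{\Norm})^+ \xrightarrow{s} B^{\Deg} \twoheadrightarrow V$ is bijective, which simultaneously yields $B^{\Deg} = B^{\Late} + s((B^{\Norm})^+)$ and $B^{\Late} \cap s((B^{\Norm})^+) = 0$. Compatibility with the $A$-bimodule structure is automatic, since $s$ is $A$-bilinear and both $B^{\Norm}$ and $B^{\Late}$ are $A$-sub-bimodules.

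The main subtlety is that for a spindle the maps $x \mapsto x^y$ need not be injective, so $V_{\Norm}$ is \emph{not} closed under the right $A$-action, whereas $B^{\Norm}$ is (by Proposition~\ref{P:Q1}). This is precisely why the proof needs the detour through $\pi$: $V_{\Norm}$ is kept purely as a $\ZZ$-linear device for computing with $s$, and then $\pi$ transports the conclusion back to the $A$-bimodule $B^{\Norm}$.
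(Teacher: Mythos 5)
Your overall strategy is essentially the paper's: define $s$ on canonical monomials $ae_{x_1}\cdots e_{x_n}$, deduce well-definedness, injectivity, degree and $A$-bilinearity from the canonical form of Lemma~\ref{Bvscomplejo}, and then derive the splitting of $B^{\Deg}$ from the decomposition $B=B^{\Norm}\oplus B^{\Deg}$ of Proposition~\ref{P:Q1} together with the facts that $s$ maps the span of non-degenerate canonical monomials bijectively onto a complement of $B^{\Late}$ in $B^{\Deg}$ while mapping $B^{\Deg}$ into $B^{\Late}$. The paper packages this as $s(B^+)+B^{\Late}=B^{\Deg}$ and $s(B^+)\cap B^{\Late}=s(B^{\Deg})$, then applies the injective $s$ to $B^+=B^{\Deg}\oplus(B^{\Norm})^+$; your detour through the projection $\pi\colon B^{\Norm}\to V_{\Norm}$ along $B^{\Deg}$ is an equivalent repackaging, and your closing remark on why $V_{\Norm}$ cannot replace $B^{\Norm}$ (it is not a right $A$-submodule when $-^y$ fails to be injective) identifies exactly the right subtlety.

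There is, however, one concrete slip. Your description of $B^{\Late}$ inside $B$ as the span of canonical monomials whose \emph{earliest} consecutive coincidence occurs at a position $i\geqslant 2$ is incorrect: the image of $B^+\otimes_A B^{\Deg}$ under multiplication is the span of canonical monomials having \emph{some} coincidence at a position $i\geqslant 2$, and such a monomial may also satisfy $x_1=x_2$. For instance $e_xe_xe_ye_y=e_x\cdot(e_xe_ye_y)$ lies in $B^+\cdot B^{\Deg}$ although its earliest coincidence is at position~$1$. As literally written, two of your steps then fail: the basis-wise decomposition $B^{\Deg}=B^{\Late}\oplus V$ misses monomials such as $e_xe_xe_ye_y$ (which lie in neither of your two pieces), and the assertion that $s(B^{\Deg})\subseteq B^{\Late}$ is contradicted by $s(e_xe_ye_y)=e_xe_xe_ye_y$, whose earliest coincidence sits at position~$1$. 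With the corrected description of $B^{\Late}$ both claims become true --- $V$ is then precisely the span of monomials whose \emph{only} coincidence is at position~$1$, and $s$ shifts any pre-existing coincidence to a position $\geqslant 2$ while possibly creating a new one at position~$1$ --- and the remainder of your argument goes through unchanged.
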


%Here $B^+$ and $B^{N,+}$ are the positive degree parts of $B$ and $B^{\Norm}$ respectively.

The map $s$ yields the first degeneracy $s_1$ for the cubical structure underlying quandle cohomology, hence the notation.

\begin{proof}
To show that $s$ is well defined, one needs to check that it is compatible with the relation $e_xy = ye_{x^y}$ in $B$, that is, we should have $e_xe_xy = ye_{x^y}e_{x^y}$. This is indeed true:
\[e_xe_xy = e_x ye_{x^y} = ye_{x^y}e_{x^y}.\]
This map is $A$-bilinear and of degree $1$ by construction. Injectivity becomes clear if one writes all the monomials in $B$ in the canonical form $x_1\cdots x_k e_{y_1} \cdots e_{y_n}$.

Further, the map $b \otimes b' \mapsto bb'$ identifies $B^{\Late}$ with an $A$-invariant subspace of $B^{\Deg}$, which is again clear using canonical forms.

Now, $s(B^+)$ and $B^{\Late}$ are graded $A$-sub-bimodules of $B^{\Deg}$, with $s(B^+) + B^{\Late}=B^{\Deg}$ and $s(B^+) \cap B^{\Late}=s(B^{\Deg})$; as usual, this is clear using canonical forms. Since $s(B^+) =s(B^{\Deg} \oplus (B^{\Norm})^+) =s(B^{\Deg}) \oplus s((B^{\Norm})^+)$, we obtain the desired decomposition $B^{\Deg}=B^{\Late} \oplus s((B^{\Norm})^+)$.
\end{proof}

As usual, decomposition~\eqref{E:QuInsideDeg} implies
\begin{equation}\label{E:QuInsideDeg2}
\oB_{\bullet}^{\Deg}(X) \simeq \oB_{\bullet}^{\Late}(X) \oplus \oB_{\bullet-1}^{\Quandle}(X)\ \text{ for } \bullet \geqslant 2,
\end{equation}
with obvious notations. And as usual, this decomposition respects more structure than \eqref{E:QuInsideDeg}:

\begin{prop}\label{P:late2}
Let $X$ be a spindle. Then \eqref{E:QuInsideDeg2} is an isomorphism of differential graded $A(X)$-bimodules. 
\end{prop}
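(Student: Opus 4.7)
The plan is to upgrade the $A(X)$-bimodule isomorphism \eqref{E:QuInsideDeg2} (already obtained by pushing Proposition~\ref{P:late} through the quotient $\oB$) to an isomorphism of d.g.\ $A(X)$-bimodules. Since the underlying direct-sum decomposition as $A$-bimodules is in place, the task reduces to compatibility with the differential. I would split it into three points: (i) $\oB^{\Late}(X)$ is a subcomplex of $\oB^{\Deg}(X)$, (ii) $s((\oB^{\Norm})^+)$ is a subcomplex, and (iii) the degree-$+1$ map $s \colon (\oB^{\Norm})^+_{\bullet-1} \to s((\oB^{\Norm})^+)_\bullet$ is a chain isomorphism in the Koszul (skew) sense, so that after the identification $\oB^{\Norm}\cong \oB^{\Quandle}$ of Lemma~\ref{BvscomplejoQ} the isomorphism~\eqref{E:QuInsideDeg2} intertwines differentials.

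For (ii) and (iii) the key is to establish the identity $ds+sd=0$ on $\oB^+$. Applying \eqref{E:dBbar} twice, one computes in $\oB$
\[d(s(e_{x_1}\cdots e_{x_n})) = d(e_{x_1}e_{x_1}e_{x_2}\cdots e_{x_n}) = e_{x_1}e_{x_1}\,d(e_{x_2}\cdots e_{x_n}),\]
while using \eqref{E:dBbar} once and noting that $s$ duplicates the leading $e$-letter of any monomial on which it acts, one finds $s(d(e_{x_1}\cdots e_{x_n}))=s(-e_{x_1}d(e_{x_2}\cdots e_{x_n}))=-e_{x_1}e_{x_1}\,d(e_{x_2}\cdots e_{x_n})$. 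The two contributions cancel. Combined with the fact that $\oB^{\Norm}$ is a subcomplex of $\oB$ (Proposition~\ref{P:Q1}), this gives (ii) and (iii) at once.

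For (i) I would inspect the boundary of a basis monomial $b = e_{x_1}\cdots e_{x_n}$ with some repetition $e_{x_j}=e_{x_{j+1}}$, $j \geq 2$, using the simplified description $d=\sum_{i=1}^n(-1)^{i-1}(\delta_i^0-\delta_i^1)$ in $\oB$. The $i=j$ and $i=j+1$ contributions cancel pairwise, via $x_j=x_{j+1}$ and the spindle identity $x_j^{x_j}=x_j$; for $i>j+1$ the repetition is preserved at position $j$, and for $i<j$ with $j \geq 3$ it shifts only to position $j-1 \geq 2$, so in both cases the result remains in $\oB^{\Late}$. The only delicate case is $j=2$, $i=1$: here $\delta_1^0(b)$ and $\delta_1^1(b)$ would \emph{a priori} produce a repetition at position $1$ (outside $\oB^{\Late}$), but the $\oB$-relation $x_1\cdot\,\sim\,\cdot$ identifies them with the same element $e_{x_2}e_{x_2}e_{x_4}\cdots e_{x_n}$, and they cancel in the signed difference. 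This case analysis is the main step, routine but requiring care with the boundary terms at the edge of the repetition.

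With (i)--(iii) in hand, the decomposition \eqref{E:QuInsideDeg2} promotes to a direct sum of subcomplexes; the isomorphism $s$ (absorbing an overall sign, if one prefers strict commutation with $d$, into the standard shift convention) provides the required d.g.\ $A(X)$-bimodule isomorphism $\oB^{\Deg}_\bullet \cong \oB^{\Late}_\bullet \oplus \oB^{\Quandle}_{\bullet-1}$.
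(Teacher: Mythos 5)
Your proof is correct and follows essentially the same strategy as the paper: reduce the statement to showing that $\oB^{\Late}$ and $s((\oB^{\Norm})^+)$ are subcomplexes of $\oB^{\Deg}$, with both arguments ultimately resting on the spindle identity via $d(e_x^2)=0$ and on the relation $d(e_xb)=-e_xd(b)$ in $\oB$. The only differences are cosmetic: the paper disposes of $\oB^{\Late}$ with the one-line computation $d(e_xb)=-e_xd(b)$ for $b\in B^{\Deg}$ instead of your cubical case analysis, while your anticommutation $ds+sd=0$ is a tidy packaging of the second half that also makes explicit the chain-isomorphism property of $s$ (the shift $\oB^{\Quandle}_{\bullet-1}\cong s((\oB^{\Norm})^+)_{\bullet}$), a point the paper leaves implicit.
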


\begin{proof}
Let us check that the differential preserves $\oB^{\Late}$. An element of $\oB^{\Late}$ is a linear combination of (the $\sim$-equivalence classes of) terms of the form $e_xb$, with $b \in B^{\Deg}$. Since $d(e_xb)= -e_xd(b)$ in $\oB$ (relation~\eqref{E:BBar}) and since $d$ preserves $B^{\Deg}$, we have $d(e_x b) \in \oB^{\Late}$.

Now, the map $s$ from Lemma~\ref{L:late} induces a map $s \colon \oB^+ \to \oB^{\Deg}$. We need to check that the differential preserves $s((\oB^{\Norm}_{\bullet})^+) \simeq \oB_{\bullet-1}^{\Quandle}(X)$. Adapting the arguments from the proof of Proposition~\ref{P:Q1}, we see that an element of $s((\oB^{\Norm})^+)$ is a linear combination of the classes of terms of the form $(e_x^2-e_y^2)b$, with $b \in B^{\Norm}$. Then \eqref{E:dBD} yields
\begin{align*}
d((e_x^2-e_y^2)b)&=d(e_x^2-e_y^2)b+(e_x^2-e_y^2)d(b)=(e_x^2-e_y^2)d(b).
\end{align*}
Since $d$ preserves $B^{\Norm}$, we conclude $d((e_x^2-e_y^2)b) \in s((B^{\Norm})^+)$.
\end{proof}

\footnotesize
\bibliographystyle{alpha}
\bibliography{biblio}
\end{document}